\documentclass[12pt,reqno]{amsart}
\usepackage{amscd,amsmath,amsthm,amssymb}
\usepackage{color}
\usepackage{tikz}
\usepackage{url}
\usepackage{circuitikz}
\usepackage{float}

\newtheorem{Theorem}{Theorem}[section]
\newtheorem{Lemma}[Theorem]{Lemma}
\newtheorem{Corollary}[Theorem]{Corollary}
\newtheorem{Proposition}[Theorem]{Proposition}
\newtheorem{Remark}[Theorem]{Remark}

\newtheorem{Example}[Theorem]{Example}

\newtheorem{Definition}[Theorem]{Definition}

\newtheorem{Conj}[Theorem]{Conjecture}

\def\qed{\ifhmode\textqed\fi
	\ifmmode\ifinner\quad\qedsymbol\else\dispqed\fi\fi}
\def\textqed{\unskip\nobreak\penalty50
	\hskip2em\hbox{}\nobreak\hfill\qedsymbol
	\parfillskip=0pt \finalhyphendemerits=0}
\def\dispqed{\rlap{\qquad\qedsymbol}}

\def\height{\textup{height}}

\def\depth{\textup{depth\,}}

\def\im{\textup{im}}
\def\pd{\textup{proj\,dim}}

\def\reg{\textup{reg}}
\def\im{\textup{im}}

\def\lex{\textup{lex}}

\def\NI{\textup{NI}}

\def\dist{\textup{dist}}

\begin{document}
	
	\title{$\textbf{k}$-neighborhood ideals of  graphs}
	\author{Somayeh Moradi, Leila Sharifan}

		\address{Somayeh Moradi, Department of Mathematics, Faculty of Science, Ilam University, P.O.Box 69315-516, Ilam, Iran}
		\email{so.moradi@ilam.ac.ir}
	
		\address{Leila Sharifan, Department of Mathematics and Computer Sciences, Hakim Sabzevari University, P.O. Box 397, Sabzevar, Iran}
	\email{l.sharifan@hsu.ac.ir}

	\subjclass[2020]{Primary 13D02, 13C05, 13A02; Secondary 05E40}
	\keywords{${\bf k}$-neighborhood ideal, regularity, projective dimension, Cohen-Macaulay}
	
	\begin{abstract}
	In this paper, we introduce and investigate the {\bf k}-neighborhood ideal of a graph, a natural generalization of the closed neighborhood ideal.
	Let $G$ be a simple graph on the vertex set $[n]$, and let $S=K[x_1,\dots,x_n]$ be the polynomial ring over a field $K$. For a vector ${\bf k}=(k_1,\ldots,k_n)\in \mathbb{N}^n$ satisfying $1\leq k_i\leq \deg_G(i)+1$ for all $i$, the ${\bf k}$-neighborhood ideal of $G$ is defined as the squarefree monomial ideal   
	$$\NI_{\bf k}(G)=\sum_{i=1}^n\, (\textbf{x}_W:\, W\subseteq N_G[i],\, |W|=k_i)$$
	of $S$, where $\textbf{x}_W=\prod_{i\in W} x_i$.
	We study homological invariants and properties of $\NI_{\bf k}(G)$ focusing on its Castelnuovo-Mumford regularity, projective dimension and Cohen-Macaulayness. 
	Special attention is devoted to the case where the vector ${\bf k}$ is the degree-vector of the graph, i.e., $k_i=\deg_G(i)$ for all vertices $i$, and to the case where $\NI_{\bf k}(G)$ coincides with the edge ideal of a graph.
	In these settings, we provide combinatorial characterizations and bounds for the regularity and projective dimension of $\NI_{\bf k}(G)$ for several classes of graphs, and further investigate the Cohen-Macaulay property of these ideals.	\end{abstract}

\maketitle
\vspace*{-1.8em}

\section{introduction}


Let $G$ be a finite simple graph with the vertex set $V(G)=[n]$, and let $S=K[x_1,\ldots,x_n]$ denote the polynomial ring in $n$ variables over a field $K$. The \emph{closed neighborhood ideal} of $G$, introduced in \cite{SM}, is the squarefree monomial ideal \[ \NI(G)=\big(\prod_{j\in N_G[i]}x_j : i\in [n]\big) \] in $S$, where $N_G[i]$ denotes the closed neighborhood of the vertex $i$ in $G$. 
This construction reveals a strong connection between algebraic properties of $\NI(G)$ and classical combinatorial invariants of $G$, including domination number, vertex cover number, and matching number.
Following their introduction, closed neighborhood ideals have been the subject of considerable study, with numerous algebraic and homological properties examined in the literature. We briefly summarize several key developments and results relevant to these ideals.
In \cite{MS}, the authors proved that for every chordal graph $G$,
$
\reg(S/\NI(G))=\tau(G),
$
where $\tau(G)$ denotes the vertex cover number of $G$. This equality was previously established for trees by Chakraborty \emph{et al.}, in \cite[Theorem 1.1]{CJRS}. The same work also showed that, for arbitrary graphs, the matching number $a(G)$ provides a lower bound for $\reg(S/\NI(G))$. Further combinatorial descriptions of homological invariants of closed neighborhood ideals like regularity, projective dimension and $v$-number can be found in \cite{CJRS,JHR,MS,OO,SM}. Joseph, Roy, and Singh \cite{J} studied minimal free resolutions of closed neighborhood ideals of trees.  Fröberg \cite{F} and Namiq \cite{Namiq} investigated Stanley-Reisner rings associated to neighborhood complexes having linear resolutions. The neighborhood complex of $G$ was first introduced by Lov$\acute{}$asz and played a central role in his celebrated proof of Kneser’s conjecture \cite{L}. This complex appears as the Alexander dual of the
Stanley-Reisner simplicial complex of $\NI(G)$, known as the dominance complex of $G$, see \cite{MW}.
The dominance complex of $G$, was studied by Matsushita and Wakatsuki \cite{MW} from a topological perspective. 
The Cohen-Macaulay property of closed neighborhood ideals has also been investigated in several works. Honeycutt and Sather-Wagstaff \cite{HS} characterized the Cohen-Macaulay closed neighborhood ideals of trees, whereas Leaman \cite{L} obtained analogous characterizations for chordal and bipartite graphs. Subsequently, the authors \cite{MS} established a characterization for very well-covered graphs. Furthermore, the componentwise linearity and regularity of squarefree powers of closed neighborhood ideals were examined by Nambi and Qureshi in \cite{NQ}. For results concerning associated primes of powers and normality properties of closed neighborhood ideals we refer the reader to \cite{HV1,N1,N2,N3}.
Several related classes of ideals have recently been introduced and investigated. These include open neighborhood ideals and $t$-closed neighborhood ideals, studied respectively by Lim \emph{et al.} \cite{Lim1,Lim} and Sharifan \cite{S1}.

Motivated by these developments, in this paper we introduce a broad generalization of the closed neighborhood ideal, called the \emph{$\mathbf{k}$-neighborhood ideal} of a graph. For a vector ${\bf k}=(k_1,\ldots,k_n)\in \mathbb{N}^n$ with $1\leq k_i\leq \deg_G(i)+1$, the $\mathbf{k}$-neighborhood ideal of $G$, denoted by $\NI_{\mathbf{k}}(G)$, is the ideal generated by the squarefree monomials $\prod_{j\in W} x_j$, where $W\subseteq N_G[i]$ with $|W|=k_i$ for some vertex $i$ of $G$.
This construction interpolates between several well-known classes of squarefree monomial ideals. When $k_i=\deg_G(i)+1$ for all $i$, one recovers the closed neighborhood ideal, while for complete graphs and the vector $\mathbf{k}$ with $k_i=d$ for all $i$,  the ideal recovers a squarefree Veronese ideal generated in degree $d$. The ${\bf k}$-neighborhood ideal of $G$ also appears as the edge ideal of another graph in certain cases.
The main objective of this work is to investigate the relationship between algebraic properties of $\NI_{\mathbf{k}}(G)$ and the structure and invariants of $G$.

The paper is organized as follows.
In Section 2, we introduce the $\bf k$-neighborhood ideal and establish its basic properties, including its relation to multiple domination in graphs. In Lemma~\ref{primaryDecom}, a correspondence between minimal prime ideals of $\NI_{\bf k}(G)$ and minimal multiple dominating sets of the graph is shown, which determines the height of $\NI_{\bf k}(G)$  in terms of generalized domination parameters. Motivated by this correspondence, we introduce the multiple dominating ideal associated to a graph. Using this construction, Proposition~\ref{ComBipart} determines the regularity and projective dimension of $\NI_{\mathbf{k}}(G)$ for complete bipartite graphs. In Proposition~\ref{m-vertex}, a combinatorial lower bound for the projective dimension of $\NI_{\mathbf{k}}(G)$ is given. Section~3 focuses on the case where $\mathbf{k}$ is the degree-vector of $G$, namely $k_i=\deg_G(i)$ for all vertices $i$. While \cite[Theorem 3.2]{MS} shows that $\reg(S/\NI(G))=\tau(G)$ for chordal graphs, Proposition~\ref{chordalReg} demonstrates that, in contrast, the difference $\tau(G)-\reg(S/\NI_{\mathbf{k}}(G))$ can be arbitrarily large when $\mathbf{k}$ is the degree-vector. We therefore conjecture that \[ \reg(S/\NI_{\mathbf{k}}(G))\leq \tau(G)-c(G) \] for every chordal graph $G$ with no isolated vertices, where $c(G)$ denotes the number of connected components of $G$ (Conjecture~\ref{chordalbound}). We verify this conjecture for several families of chordal graphs. Lemma~\ref{path} provides an explicit formula for $\reg(S/\NI_{\mathbf{k}}(P_n))$, where $P_n$ is a path graph. Using this result, Theorem~\ref{caterpillar} establishes the conjecture for caterpillar graphs, while Theorem~\ref{clique-star} proves it for clique-star graphs and generalized caterpillar graphs. 
 Theorem~\ref{chordalCMgeneralized} presents a family of graphs for which $S/\NI_{\mathbf{k}}(G)$ is Cohen-Macaulay, and $\reg(S/\NI_{\bf k}(G))$ can be described combinatorially.  The section concludes with Corollary~\ref{product1} and Corollary~\ref{product2}, which give lower bounds for $\pd(S/\NI_{\bf k}(G))$ for some graphs, using the concept of an $\mathbf{m}$-cover introduced in Section 2.
In Section~4, we investigate properties of $\NI_{\mathbf{k}}(G)$ in the cases that $\NI_{\mathbf{k}}(G)$ is the edge ideal of a graph. A fundamental example occurs when $k_i=2$ for all $i$, in which case \[ \NI_{\mathbf{k}}(G)=I(\widetilde{G}), \] where $\widetilde{G}$ denotes the second power of $G$. Additional instances are described in Propositions~\ref{dominatingset} and \ref{edgeIdeal2}. Our main emphasis is on the case $k_i=2$ for all $i$. By interpreting combinatorial invariants and structural properties of $\widetilde{G}$ in terms of the original graph $G$, we derive precise descriptions and bounds for $\reg(S/\NI_{\mathbf{k}}(G))$ for several graph classes, including claw-free chordal graphs, block graphs, graphs with  no cycle of length bigger than $5$, and striped maximal outerplane graphs (see Proposition~\ref{claw-free}, Proposition~\ref{block}, Proposition~\ref{small cycle}, Proposition~\ref{outerplane}, and Corollary~\ref{regEdgeIdealCase}).  Theorem~\ref{CMtree} provides a combinatorial characterization of the Cohen-Macaulay property of $S/\NI_{\mathbf{k}}(T)$ when $T$ is a tree. Finally, in Corollaries~\ref{product3} and  ~\ref{product4} lower bounds for the projective dimension of $\NI_{\mathbf{k}}(G)$ is given for products of path graphs.

	\section{Basic properties of $\bf{k}$-neighborhood ideals}

In this section, we present the minimal primary decomposition of a $\mathbf{k}$-neighborhood ideal  and apply this result to determine homological invariants of $\NI_{\mathbf{k}}(G)$, when $G$ is a complete bipartite graph. Moreover we give a lower  bound  for the projective dimension of the $\mathbf{k}$-neighborhood ideal of an arbitrary graph $G$.

Throughout this paper, $G$ denotes a finite simple graph without isolated vertices on the vertex set $[n]$, and ${\mathbf k}=(k_1,\ldots,k_n)\in \mathbb{N}^n$ is a vector satisfying $1\leq k_i\leq \deg_G(i)+1$ for all $i\in [n]$. Here, $\deg_G(i)$ denotes the degree of $i$ in $G$. When the graph is clear from the context, we simply write $\deg(i)$ instead of $\deg_G(i)$. The vertex set and edge set of $G$ are denoted by $V(G)$ and $E(G)$, respectively. For a subset $A\subseteq [n]$, we set $\textbf{x}_A=\prod_{i\in A} x_i$, and for any vertex $i\in V(G)$, we set $u_{i,G}=\textbf{x}_{N_G[i]}$ to be a monomial in $S=K[x_1,\ldots,x_n]$.

	\smallskip
	
	 The {\em $\bf{k}$-neighborhood ideal} of $G$ is defined as
	$$\NI_{\bf k}(G)=\sum_{i=1}^n\, (\textbf{x}_W:\, W\subseteq N_G[i],\, |W|=k_i).$$
	
	Notice that if $k_i=\deg(i)+1$ for all $i\in [n]$, then the $\bf{k}$-neighborhood ideal of $G$ is just the closed neighborhood ideal $\NI(G)$. 
	
	We start with an example which shows that a $\bf{k}$-neighborhood ideal  may appear as a squarefree Veronese ideal or as an edge ideal. A squarefree Veronese ideal on $n$ variables and generated in degree $d$ is denoted by $I_{n,d}$.

	\begin{Example} 
		Let $G$ be a graph on $[n]$. Then
		\begin{itemize}
			\item[(1)] If $G=K_n$ and ${\bf k}=(d,\ldots,d)\in \mathbb{N}^n$ for some $1\leq d\leq n$, then $\NI_{\bf k}(G)=I_{n,d}$.
			\item[(2)] Let $G$ be a bipartite graph and ${\bf k}=(2,\ldots,2)\in \mathbb{N}^n$. Then  $\NI_{\bf k}(G)=I(K_n)$ if and only if $G$ is a complete bipartite graph.
			\item[(3)] Let $G$ be an $(n-2)$-regular graph and ${\bf k}=(d,\ldots,d)\in \mathbb{N}^n$, where $1\leq d\leq n-1$ is an integer. The complementary graph $G^c$ is the disjoint union of edges.  Hence, for any $i\in [n]$ there exists $j\in [n]$ such that $N_G[j]=[n]\setminus \{i\}$. This implies that
		 $\NI_{\bf k}(G)=I_{n,d}$.
			\item[(4)] Let $G=K_{2,n-2}$ be a complete bipartite graph with a bipartite partition $\{1,2\}\sqcup\{3,\ldots,n\}$, and let ${\bf k}=(n-2,n-2,2,\ldots,2)\in \mathbb{N}^n$. Then $$\NI_{\bf k}(G)=I(K_{2,n-2})+(x_1x_2\,,\,x_3x_4\cdots x_n).$$
		\end{itemize}
	\end{Example}

	In \cite[Lemma 2.2]{SM}, it was shown that the minimal prime ideals of $\NI(G)$ correspond to the minimal dominating sets of $G$. In order to describe the minimal prime ideals of $\NI_{\mathbf{k}}(G)$, we first introduce an appropriate generalization of dominating sets.
	
	There are several extensions of the classical notion of a dominating set in graph theory. One such generalization is that of an \emph{$h$-tuple dominating set}, and more generally, the notion of an \emph{$\mathbf{m}$-multiple dominating set}, where $\mathbf{m}=(m_1,\ldots,m_n)\in\mathbb{N}^n$. These concepts were first introduced in~\cite{HH}. Let $\mathbf{m}=(m_1,\ldots,m_n)\in\mathbb{N}^n$ satisfy $1 \le m_i \le \deg(i)+1$ for all $i\in[n]$. A subset $S \subseteq [n]$ is called an \emph{$\mathbf{m}$-multiple dominating set} of $G$ if
	\[
	|N_G[i] \cap S| \ge m_i \quad \text{for all } i \in [n].
	\]
	In particular, $S$ is called an \emph{$h$-tuple dominating set} if $m_i = h$ for all $i \in [n]$.
	We denote by $\gamma(G,\mathbf{m})$ the minimum cardinality of an $\mathbf{m}$-multiple dominating set of $G$. This notion was first studied in~\cite{HH}.
	
	\begin{Lemma}\label{primaryDecom}
		
		Let \(G\) be a graph on the vertex set \([n]\), and let  ${\bf k}=(k_1,\ldots,k_n)\in \mathbb{N}^n$ satisfy  $1\leq k_i\leq \deg(i)+1$ for all $i$. Define \(\mathbf{m}=(m_1,\ldots,m_n)\) by
		\[
		m_i = \deg(i) - k_i + 2 \quad \text{for each } i\in[n].
		\]
		Then a monomial prime ideal \(P=(x_{j_1},\ldots,x_{j_\ell})\) is a minimal prime ideal of \(\mathrm{NI}_{\mathbf{k}}(G)\) if and only if the set \(\{j_1,\ldots,j_\ell\}\) is a minimal \(\mathbf{m}\)-multiple dominating set of \(G\).
	\end{Lemma}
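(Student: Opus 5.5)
The plan is to use the standard fact that for a squarefree monomial ideal $I$, a monomial prime $P_C=(x_j : j\in C)$ contains $I$ if and only if $C$ meets the support of every minimal generator, i.e. $C$ is a vertex cover of the hypergraph of generators. The minimal primes of $I$ are then exactly the $P_C$ with $C$ a minimal such cover. Accordingly, the proof reduces to a purely combinatorial claim:
\[
C \text{ meets every } W\subseteq N_G[i] \text{ with } |W|=k_i \ (\text{for all } i) \iff |N_G[i]\cap C|\ge m_i \ \text{for all } i .
\]
Once this is established, the minimal elements on the two sides agree, and the lemma follows.

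To prove the claim, fix a vertex $i$ and set $N=N_G[i]$, so $|N|=\deg(i)+1$. The set $C$ meets every $k_i$-subset of $N$ if and only if no $k_i$-subset of $N$ lies inside the complement $N\setminus C$. This holds exactly when $|N\setminus C|\le k_i-1$, which is equivalent to
\[
|N\cap C|\ge \deg(i)+1-(k_i-1)=\deg(i)-k_i+2=m_i .
\]
For the direction $(\Rightarrow)$, argue by contraposition: if $|N\setminus C|\ge k_i$, pick any $k_i$-subset $W$ of $N\setminus C$; then $\mathbf{x}_W$ is a generator of $\NI_{\mathbf k}(G)$ not lying in $P_C$. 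The direction $(\Leftarrow)$ is just the counting above. The hypothesis $1\le k_i\le \deg(i)+1$ guarantees $1\le m_i\le \deg(i)+1$, so $\mathbf m$ is an admissible vector in the sense of the definition of $\mathbf m$-multiple dominating sets.

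Finally, I would note that minimality is preserved under this correspondence. Containment $P_C\subseteq P_{C'}$ is equivalent to $C\subseteq C'$, so a minimal prime over $\NI_{\mathbf k}(G)$ corresponds to an inclusion-minimal $\mathbf m$-multiple dominating set, and conversely. I expect no real obstacle here. The only point needing care is to state explicitly the standard fact that every minimal prime of a squarefree monomial ideal is a monomial prime $P_C$, and that $I\subseteq P_C$ if and only if $C$ meets the support of every generator. This can be quoted as standard (e.g. from Herzog–Hibi).
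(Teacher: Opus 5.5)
Your proposal is correct and follows essentially the same route as the paper. Both reduce to showing that $\NI_{\bf k}(G)\subseteq (x_j : j\in C)$ if and only if $C$ is an $\mathbf m$-multiple dominating set, and both prove this by comparing $|N_G[i]\setminus C|$ with $k_i$ (a $k_i$-subset of the complement gives a generator outside the prime), while you spell out the transfer of minimality that the paper leaves implicit in ``it suffices to show.''
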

	\begin{proof}
	It suffices to show that  $\NI_{\bf k}(G)\subseteq (x_{j_1},\ldots, x_{j_\ell}) $ if and only if  $\{j_1,\ldots, j_l\}$ is an ${\bf{m}}$-multiple dominating set of $G$. Suppose that $S=\{j_1,\ldots,j_\ell\}$ is an $\mathbf{m}$-multiple dominating set of $G$, and let $\mathbf{x}_W \in \mathrm{NI}_{\mathbf{k}}(G)$, where $W \subseteq N_G[i]$ with $|W|=k_i$ for some $i \in [n]$. Since $S$ is an $\mathbf{m}$-multiple dominating set, we have $|S \cap N_G[i]| \ge m_i$. Hence, we may decompose
	$
	N_G[i] = A \sqcup B,
	$
	where $A \subseteq S$ with $|A| = m_i$ and $B = N_G[i] \setminus A$. Then
	\[
	|B| = \deg(i)+1 - m_i < k_i = |W|.
	\]
	It follows that $W \nsubseteq B$, and consequently $W \cap A \neq \emptyset$. Since $A \subseteq S$, we conclude that $W \cap S \neq \emptyset$, which implies $\mathbf{x}_W \in (x_{j_1},\ldots,x_{j_\ell})$.
	
To prove the ``only if'' part, suppose that 
$\mathrm{NI}_{\mathbf{k}}(G)\subseteq (x_{j_1},\ldots,x_{j_\ell}),$
	and by contradiction assume that
	$|N_G[i]\cap \{j_1,\ldots,j_\ell\}| < m_i$
	for some \(i \in [n]\). Then it follows that
	\[
	|N_G[i]\setminus \{j_1,\ldots,j_\ell\}| \ge \deg(i)+2 - m_i.
	\]
	Hence, we may choose a subset \(W \subseteq N_G[i]\setminus \{j_1,\ldots,j_\ell\}\) with
	$|W| = \deg(i)+2 - m_i=k_i.$
	Consequently, \(\mathbf{x}_W \in \mathrm{NI}_{\mathbf{k}}(G)\), while \(\mathbf{x}_W \notin (x_{j_1},\ldots,x_{j_\ell})\), yielding a contradiction.
	\end{proof}

	\begin{Corollary}\label{height}
	We have $\height(\NI_{\bf k}(G))=\gamma(G,{\bf m})$,
		where ${\bf m}=(m_1,\ldots,m_n)$ is the vector defined by $m_i=\deg(i)-k_i+2$ for all $i\in[n]$.  
	\end{Corollary}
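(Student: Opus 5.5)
The plan is to read this off Lemma~\ref{primaryDecom}, using the standard description of the height of a squarefree monomial ideal. Since $\NI_{\bf k}(G)$ is a squarefree monomial ideal, it is radical. Hence it is the intersection of its minimal primes, and each of these is a monomial prime ideal $(x_{j_1},\ldots,x_{j_\ell})$ generated by variables. Its height is therefore
\[
\height(\NI_{\bf k}(G))=\min\{\height(P) : P \text{ a minimal prime of } \NI_{\bf k}(G)\}=\min\{\ell : (x_{j_1},\ldots,x_{j_\ell}) \text{ minimal prime}\},
\]
because a prime generated by $\ell$ distinct variables has height $\ell$.

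By Lemma~\ref{primaryDecom}, the minimal primes $(x_{j_1},\ldots,x_{j_\ell})$ are in bijection with the minimal $\mathbf{m}$-multiple dominating sets $\{j_1,\ldots,j_\ell\}$, and the bijection preserves cardinality. So the height equals the minimum cardinality of a \emph{minimal} $\mathbf{m}$-multiple dominating set.

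It remains to check that this minimum equals $\gamma(G,\mathbf{m})$, the minimum over all $\mathbf{m}$-multiple dominating sets. First, an $\mathbf{m}$-multiple dominating set exists. The whole vertex set $[n]$ is one, because $|N_G[i]\cap[n]|=\deg(i)+1\ge m_i$; this holds since $k_i\ge 1$ gives $m_i=\deg(i)-k_i+2\le \deg(i)+1$. Second, a dominating set of minimum cardinality is automatically minimal under inclusion. Conversely, every minimal dominating set has cardinality at least $\gamma(G,\mathbf{m})$. Hence the two minima coincide.

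Finally, the condition $k_i\le\deg(i)+1$ gives $m_i\ge 1$, so $\mathbf{m}$ lies in the range where the paper defines $\mathbf{m}$-multiple domination. There is no real obstacle; the only points needing care are the radicality of squarefree monomial ideals and the existence of an $\mathbf{m}$-multiple dominating set.
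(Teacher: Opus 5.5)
Your proof is correct and matches the paper's approach. The paper gives no separate proof: the corollary is read off from Lemma~\ref{primaryDecom} by taking the minimum height of the monomial minimal primes, and it implicitly uses that a minimum-cardinality $\mathbf{m}$-multiple dominating set is inclusion-minimal, with $[n]$ guaranteeing that one exists. Invoking radicality is harmless but not needed, since the height of any ideal is the minimum height of its minimal primes.
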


	For a vector \(\mathbf{m}=(m_1,\ldots,m_n)\in \mathbb{N}^n\) satisfying \(1 \le m_i \le \deg(i)+1\) for all \(i \in [n]\), we define the \emph{\(\mathbf{m}\)-multiple dominating ideal} of \(G\) to be the monomial ideal
	\[
	DI_{\mathbf{m}}(G) = \big(\mathbf{x}_D \;:\; D \text{ is a minimal } \mathbf{m}\text{-multiple dominating set of } G\big).
	\]
	
	By Lemma \ref{primaryDecom}, it follows that
	$
	\mathrm{NI}_{\mathbf{k}}(G)^\vee = DI_{\mathbf{m}}(G),
	$
	where \(m_i = \deg(i) - k_i + 2\) for all \(i \in [n]\).
	
	We will make use of this duality to investigate the regularity and projective dimension of \(\mathrm{NI}_{\mathbf{k}}(G)\) in the case where \(G\) is a complete bipartite graph.
	
	Recall that a monomial ideal $I$ has {\em linear quotients} if there exists an order $u_1,\ldots,u_m$ on the set of minimal monomial generators of $I$ such that the ideal $(u_1,\ldots,u_i):(u_{i+1})$ is generated by variables for any $1\leq i\leq m-1$.
	
	\begin{Proposition}\label{ComBipart} Let \(G=K_{r,s}\) be the complete bipartite graph with bipartition $ V(G)=X\sqcup Y$, where $X=\{x_1,\ldots,x_r\}$ and $Y=\{y_1,\ldots,y_s\}$. Let $m \leq r \leq s$,   \(\mathbf{m}=(m,\ldots,m)\in \mathbb{N}^n\)  and $\mathbf{k}=(k_1,\ldots,k_n)$ with $k_i=\deg(i)-m+2$ for all $i$. The following statements hold: \begin{enumerate} 
			\item[(a)] If $m<r$, then  \(DI_{\mathbf m}(G)\) is minimally generated by 
				\[
			\begin{aligned}
				&\{ \mathbf{x}_A \mathbf{y}_B : A \subset [r],\; B \subset [s],\; |A| = |B| = m \}
				\cup \{ \mathbf{x}_A \mathbf{y}_{[s]} : A \subset [r],\; |A| = m-1 \} \\
				&\qquad \cup \{ \mathbf{x}_{[r]} \mathbf{y}_B : B \subset [s],\; |B| = m-1 \},
			\end{aligned}
			\] 
			 where \(\mathbf{x}_A=\prod_{i\in A}x_i\) and \(\mathbf{y}_B=\prod_{j\in B}y_j\). 
			
			\item[(b)] If $m=r$, then \(DI_{\mathbf m}(G)\) is minimally generated by \[ \bigl\{\mathbf{x}_A\mathbf{y}_{[s]} : A\subset [r],\; |A|=m-1\bigr\} \;\cup\; \bigl\{\mathbf{x}_{[r]}\mathbf{y}_B : B\subset [s],\; |B|=m-1\bigr\}. \] 
			
			\item[(c)] \(DI_{\mathbf m}(G)\) has linear quotients. 
			
			\item[(d)] $\pd(S/\NI_{\bf k}(G))=\reg(DI_{\bf m}(G))=s+m-1$.
			
			\item[(e)] $\reg(S/\NI_{\bf k}(G))=\pd(DI_{\bf m}(G))= \begin{cases} r+s-2m, & \text{if } m<r\le s,\\[4pt] s-m+1, & \text{if } m=r\le s. \end{cases} $ \end{enumerate} 
		\end{Proposition}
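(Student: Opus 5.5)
The plan is to first determine the minimal $\mathbf{m}$-multiple dominating sets of $K_{r,s}$ directly, then exhibit a linear quotient order, and finally transfer the invariants through Alexander duality $\NI_{\bf k}(G)^\vee=DI_{\bf m}(G)$ (Lemma~\ref{primaryDecom}).

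\emph{Parts (a) and (b).} Let $D\subseteq V(G)$, and set $a=|D\cap X|$ and $b=|D\cap Y|$. Since $N_G[x_i]=\{x_i\}\cup Y$, the condition at $x_i$ reads $b+[x_i\in D]\ge m$. Hence $D$ satisfies the conditions at all vertices of $X$ if and only if either $b\ge m$, or $b=m-1$ and $X\subseteq D$. Symmetrically, $D$ satisfies the conditions at all vertices of $Y$ if and only if either $a\ge m$, or $a=m-1$ and $Y\subseteq D$. I would then go through the four combinations.
\begin{enumerate}
\item[(i)] $a,b\ge m$. Minimality forces $a=b=m$, which gives the sets $A\cup B$.
\item[(ii)] $a=m-1$ with $Y\subseteq D$. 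This gives $A\cup Y$ with $|A|=m-1$, and $b=s\ge m$ holds automatically.
\item[(iii)] $b=m-1$ with $X\subseteq D$. This gives $X\cup B$ with $|B|=m-1$.
\item[(iv)] $a=b=m-1$. This would need $X,Y\subseteq D$, so $r=m-1$, which is impossible.
\end{enumerate}
It remains to check minimality and that the resulting family is an antichain.
\begin{itemize}
\item Removing a vertex from $A\cup Y$ either makes $a=m-2$, or makes some $y\notin D$ with $a=m-1$; both fail.
\item A set $A'\cup B'$ with $|A'|=|B'|=m$ cannot be contained in $A\cup Y$, because $|A|=m-1$. The same holds for $X\cup B$ when $m<r$.
\end{itemize}
When $m=r$, the type (i) sets are $X\cup B$ with $|B|=m$. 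Each of these strictly contains a type (iii) set $X\cup B'$ with $|B'|=m-1$, so they drop out, which gives (b).

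\emph{Part (c).} I order the generators by increasing degree. Since $m<r\le s$ forces $s\ge m+1$, the degrees satisfy $2m\le r+m-1\le s+m-1$, and the order is:
\begin{enumerate}
\item[(1)] the $\mathbf{x}_A\mathbf{y}_B$ with $|A|=|B|=m$, in lex order (this is the product of two squarefree Veronese ideals in disjoint variables, which has linear quotients with colon generated by at most $(r-m)+(s-m)$ variables);
\item[(2)] the $\mathbf{x}_{[r]}\mathbf{y}_B$ with $|B|=m-1$, in lex order;
\item[(3)] the $\mathbf{x}_A\mathbf{y}_{[s]}$ with $|A|=m-1$, in lex order.
\end{enumerate}
For $u=\mathbf{x}_{[r]}\mathbf{y}_B$ in group (2):
\begin{itemize}
\item an earlier generator $\mathbf{x}_{A'}\mathbf{y}_{B'}$ contributes $\mathbf{y}_{B'\setminus B}$, and $B'\setminus B\neq\emptyset$ because $|B'|>|B|$;
\item choosing $B'=B\cup\{j\}$ shows that the variable $y_j$ lies in the colon for every $j\notin B$;
\item the earlier lex-smaller $\mathbf{x}_{[r]}\mathbf{y}_{B''}$ only contribute such $y_j$ as well.
\end{itemize}
So the colon is $(y_j: j\notin B)$, generated by $s-m+1$ variables. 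For $u=\mathbf{x}_A\mathbf{y}_{[s]}$ in group (3):
\begin{itemize}
\item the generators from (1) give $(x_j:j\notin A)$;
\item the generators from (2) give $\mathbf{x}_{[r]\setminus A}$, which is already divisible by those variables.
\end{itemize}
So this colon is generated by $r-m+1$ variables. Case (b) is handled by the same argument with only groups (2) and (3). This part, making sure that every earlier generator's colon monomial is divisible by one of the listed variables, is the main point to check carefully. It is the step I expect to need the most care.

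\emph{Parts (d) and (e).} By Terai's theorem together with Eagon–Reiner, $\pd(S/I)=\reg(I^\vee)$ and $\reg(S/I)=\pd(I^\vee)$. For an ideal with linear quotients in a degree-nondecreasing order:
\begin{itemize}
\item the regularity is the maximal generator degree, here $s+m-1$;
\item the projective dimension is the maximal number of variables generating a colon.
\end{itemize}
In case (a) the colon sizes are $r+s-2m$, $s-m+1$ and $r-m+1$, whose maximum is $r+s-2m$ since $r\ge m+1$. In case (b) only groups (2) and (3) occur, and group (3) contributes $r-m+1=1$, so the maximum is $s-m+1$. This gives both formulas.
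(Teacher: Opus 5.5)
Your route is the paper's. You characterize the minimal $\mathbf m$-multiple dominating sets of $K_{r,s}$, give a linear-quotient order on $DI_{\mathbf m}(G)$, and transfer to $\NI_{\bf k}(G)$ via Terai's duality and the Betti-number formula for linear quotients. Your order differs from the paper's in one useful respect: you put the $\mathbf{x}_{[r]}\mathbf{y}_B$ before the $\mathbf{x}_A\mathbf{y}_{[s]}$, so it is degree-increasing. For $m<r$ both orders work, because the generators $\mathbf{x}_{A'}\mathbf{y}_{B'}$ with $|A'|=|B'|=m$ supply every needed variable. For $m=r<s$ the paper's order fails: the first $\mathbf{x}_{[r]}\mathbf{y}_B$ would have colon ideal $(\mathbf{y}_{[s]\setminus B})$, which is principal and generated in degree $s-m+1\ge 2$. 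Your order avoids this.

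The gap is your claim that case (b) ``is handled by the same argument with only groups (2) and (3).'' In case (a) you put $y_j$ into the colon of $\mathbf{x}_{[r]}\mathbf{y}_B$ using the group (1) generator $\mathbf{x}_{A'}\mathbf{y}_{B\cup\{j\}}$. When $m=r$ there are no such generators. Group (2) is then just $\mathbf{x}_{[r]}$ times the squarefree Veronese ideal of degree $m-1$ in the $y$'s. The lex colon of $\mathbf{x}_{[r]}\mathbf{y}_B$ is $(y_k : k\notin B,\ k<\max B)$, which has $\max B-m+1$ generators (and is $0$ if $B=\emptyset$). So linear quotients still hold, but the maximum $s-m+1$ in (e) is attained, at $B=\{s-m+2,\ldots,s\}$, only when $m\ge 2$.

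For $m=r=1<s$, both your conclusion and statement (e) itself are false. In that case $\NI_{\bf k}(K_{1,s})=\NI(K_{1,s})=x_1(y_1,\ldots,y_s)$, so $\reg(S/\NI_{\bf k}(G))=1$. Correspondingly, $DI_{\mathbf m}(G)=(x_1,\,y_1\cdots y_s)$ is a complete intersection with projective dimension $1$, not $s$. So the case $m=r$ of (e) needs $m\ge 2$, and for such $m$ you must use the Veronese colon computation rather than the case (a) one. The paper's ``analogously'' passes over the same point.

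Two smaller fixes.
\begin{enumerate}
\item In (a) your antichain check runs in the wrong direction. Minimality of $A\cup B$ with $|A|=|B|=m$ requires that it contain no $A''\cup Y$ and no $X\cup B''$. These follow from $s>m$ and $r>m$ respectively.
\item You only bound the group (1) colons above by $(r-m)+(s-m)$, but (e) needs this bound to be attained. It is: the last generator $\mathbf{x}_{\{r-m+1,\ldots,r\}}\mathbf{y}_{\{s-m+1,\ldots,s\}}$ has colon $(x_1,\ldots,x_{r-m},y_1,\ldots,y_{s-m})$, as in the paper.
\end{enumerate}
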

		
	\begin{proof}

		(a), (b)
		A set \(D \subseteq V(G)\) is an \(\mathbf{m}\)-multiple dominating set of \(G\) if and only if one of the following conditions holds:
	 $|D\cap X| \ge m $ and $|D\cap Y| \ge m$,
		or
		$|D\cap X| = m-1 \ \text{and} \ Y \subseteq D,$
		or
		$|D\cap Y| = m-1 \ \text{and} \ X \subseteq D$.
		Consequently, the statements follow.
	\smallskip
		
		(c) Suppose $m<r$. Let $>_{\lex}$ denote the lexicographical order on $S$ induced by $x_1>\cdots>x_r>y_1>\cdots>y_s$. Let $u_1>_{\lex}  \cdots >_{\lex} u_p$ be all the monomials of degree $m$ in the variables $x_1,\ldots, x_r$, and let $v_1>_{\lex}  \cdots >_{\lex} v_q$ be all the monomials of degree $m-1$ in the same variables. Similarly, let $\tilde{u}_1>_{\lex} \cdots >_{\lex}\tilde{u}_{p'}$ be all the monomials of degree $m$ in the variables $y_1,\ldots, y_s$, and  $\tilde{v}_1>_{\lex}\cdots >_{\lex}\tilde{v}_{q'}$ be all the monomials of degree $m-1$ in these variables.
		 Then one can see that
		$$
		\begin{aligned}
			f_1 &= u_1\tilde{u}_1,\; \ldots,\; f_p = u_p\tilde{u}_1, \\
			f_{p+1} &= u_1\tilde{u}_2,\; \ldots,\; f_{2p} = u_p\tilde{u}_2,\\
			&\;\;\vdots\\
			f_{p(p'-1)+1} &= u_1\tilde{u}_{p'},\; \ldots,\; f_{pp'} = u_p\tilde{u}_{p'}, \\
			f_{pp'+1} &= v_1y_{[s]},\; \ldots,\; f_{pp'+q} = v_qy_{[s]}, \\
			f_{pp'+q+1} &= x_{[r]}\tilde{v}_1,\; \ldots,\; f_{pp'+q+q'} = x_{[r]}\tilde{v}_{q'}.
		\end{aligned}
		$$
	 forms an order of  linear quotients on the minimal monomial generators of $DI_{\mathbf{m}}(G)$. The case $m=r$ can be treated analogously.

			\smallskip
			
		(d) Since $DI_{\mathbf{m}}(G)$ has linear quotients, by \cite[Corollary 2.7]{SV}, $\reg(DI_{\mathbf{m}}(G))$ is equal to the maximum degree of elements in the minimal monomial generating set of $DI_{\mathbf{m}}(G)$. By (a) and (b) this degree is equal to $s+m-1$.   
	The equality $\pd(S/\NI_{\bf k}(G))=\reg(DI_{\bf m}(G))$ follows from \cite[Theorem 2.1]{T} and the fact that $	\mathrm{NI}_{\mathbf{k}}(G)^\vee = DI_{\mathbf{m}}(G)$. 
		
			\smallskip
			
		(e) Suppose that $m<r$. For each $1\leq \ell\leq pp'+q+q'$ let $n_\ell$ be the number of elements in the minimal monomial system of generators of $(f_1,\ldots,f_{\ell-1}):(f_\ell)$. Then by \cite[Corollary 2.7]{SV}, $$\pd(DI_{\mathbf{m}}(G))=\max\{n_\ell : 1\leq \ell\leq pp'+q+q'\}.$$ One can easily see that $\max\{n_\ell : 1\leq \ell\leq pp'+q+q'\}=n_{pp'}$ and $(f_1,\ldots,f_{pp'-1}):(f_{pp'})=(x_1,\dots,x_{r-m},y_1,\ldots,y_{s-m})$. Hence, $n_{pp'}=r+s-2m$. This together with \cite[Theorem 2.1]{T} implies that $\reg(S/\NI_{\bf k}(G))=\pd(DI_{\bf m}(G))=r+s-2m$.  The case $m=r$ can be discussed similarly.
	\end{proof}

		\begin{Definition}
		Let $G$ be a graph on $[n]$, and let ${\bf m}=(m_1,\ldots,m_n)\in \mathbb{Z}^n$ be a vector such that $0\leq m_i\leq \deg(i)$. We say that  $C \subseteq [n]$ is an ${\bf m}$-vertex cover of $G$ if 
		$$
		\begin{cases}
			|N_G[i]\cap C|\le m_i, & \forall\, i\in C,\\[4pt]
			|N_G[i]\cap C|= m_i, & \forall\, i\notin C.
		\end{cases}
		$$	
	\end{Definition}
	
	Note that an $\mathbf{m}$-vertex cover of a graph $G$ does not necessarily exist for every vector
	$\mathbf{m} = (m_1,\ldots,m_n) \in \mathbb{Z}^n$ with  $0\leq m_i\leq \deg(i)$. For instance, if $G = P_3$ and
	$\mathbf{m} = (0,2,1)$, then $G$ does not admit an $\mathbf{m}$-vertex cover.

	On the other hand, any minimal vertex cover of a graph $G$ is an
	$\mathbf{m}$-vertex cover of $G$ for $\mathbf{m} = (\deg(1),\ldots,\deg(n))$.
	Moreover, if $C = [n]\setminus D$, where $D$ is an arbitrary dominating set of $G$, then
	$|N_G[i]\cap C| \le \deg(i)$ for every $i \in [n]$. Hence, $C$ is an
	$\mathbf{m}$-vertex cover for each vector $\mathbf{m} = (m_1,\ldots,m_n)$ satisfying
	$0 \le m_i \le \deg(i)$ and
	\[
	\begin{cases}
		|N_G[i]\cap C| \le m_i, & \forall\, i \in C,\\[4pt]
		|N_G[i]\cap C| = m_i, & \forall\, i \notin C.
	\end{cases}
	\]

	In the following we present a lower  bound  for the projective dimension of $\NI_{\bf k}(G)$ in terms 
	of ${\bf m}$-vertex covers of $G$.  
	 
	\begin{Proposition}\label{m-vertex}
		Let $G$ be a graph on $[n]$, and let ${\bf k}=(k_1,\ldots,k_n)\in \mathbb{N}^n$ be a vector such that $1\leq k_i\leq \deg(i)+1$. Let $C$ be an ${\bf m}$-vertex cover of $G$, where $m_i=k_i-1$ for all $i$. Then 
		\begin{enumerate}
			\item [(a)]  $\pd(S/\NI_{\bf k}(G))\geq n-|C|$.
			\item [(b)] $\depth(S/\NI_{\bf k}(G))\leq |C|$.
		\end{enumerate}
	\end{Proposition}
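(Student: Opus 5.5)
The plan is to localize at the variables indexed by $C$ and observe that $\NI_{\bf k}(G)$ becomes the ideal generated by the remaining variables. Projective dimension cannot increase under localization, so this gives the bound in (a) immediately.

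Set $D=[n]\setminus C$ and let $T=S[x_c^{-1}: c\in C]$ be the localization of $S$ at the multiplicative set generated by $\mathbf{x}_C$. Write $J=\NI_{\bf k}(G)T$. Each generator $\mathbf{x}_W$ of $\NI_{\bf k}(G)$ becomes, up to a unit of $T$, the monomial $\mathbf{x}_{W\cap D}$. I will first show that
\[
J=(x_i: i\in D)T .
\]

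\emph{Each $x_i$ with $i\in D$ lies in $J$.} Since $i\notin C$, the definition of an ${\bf m}$-vertex cover gives $|N_G[i]\cap C|=m_i=k_i-1$. Put $W=(N_G[i]\cap C)\cup\{i\}$. Then $W\subseteq N_G[i]$ and $|W|=k_i$, so $\mathbf{x}_W\in \NI_{\bf k}(G)$, and its image in $T$ is a unit times $x_i$.

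\emph{Every generator of $J$ lies in $(x_i:i\in D)T$.} Suppose some generator had $W\cap D=\emptyset$, that is, $W\subseteq N_G[j]\cap C$ with $|W|=k_j$ for some $j$. In both cases of the definition we have $|N_G[j]\cap C|\le m_j=k_j-1$, which is a contradiction. Hence every generator of $J$ is a unit times a nonconstant monomial in the variables $x_i$, $i\in D$, and the displayed equality follows.

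To conclude (a), note that $\{x_i: i\in D\}$ is a $T$-regular sequence. The Koszul complex therefore gives a minimal free resolution of $T/J$ over $T$; localizing further at the maximal ideal $(x_i: i\in D)T+(x_c - 1: c\in C)T$ keeps it minimal and nonzero, so $\pd_T(T/J)=|D|$. A minimal free resolution of $S/\NI_{\bf k}(G)$ localizes to a free resolution of $T/J$, so
\[
\pd(S/\NI_{\bf k}(G))\ge \pd_T(T/J)=|D|=n-|C|.
\]
Part (b) then follows from the Auslander--Buchsbaum formula, $\depth(S/\NI_{\bf k}(G))=n-\pd(S/\NI_{\bf k}(G))\le |C|$.

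The only step needing care is the inequality $\pd_T(T/J)\le \pd_S(S/\NI_{\bf k}(G))$ together with the exact value $\pd_T(T/J)=|D|$. Both are standard: localization is exact and preserves freeness, and the Koszul complex on a regular sequence has nonvanishing top homology after tensoring with the residue field. The combinatorial verification above uses exactly the two defining conditions of an ${\bf m}$-vertex cover, the equality for vertices outside $C$ and the inequality for all vertices.
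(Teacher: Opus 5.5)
Your proof is correct, but it takes a different route from the paper's.

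\textbf{The paper's route.} It uses \eqref{star1} and \eqref{star2} to see that $[n]\setminus C$ is a minimal multiple dominating set for the vector $m'_i=\deg(i)-k_i+2$. By Lemma~\ref{primaryDecom}, $P=(x_i: i\notin C)$ is then a minimal prime of $\NI_{\bf k}(G)$. Hence $\mathbf{x}_{[n]\setminus C}$ is a minimal generator of $\NI_{\bf k}(G)^\vee=DI_{\mathbf{m}'}(G)$ of degree $n-|C|$. Terai's theorem $\pd(S/I)=\reg(I^\vee)$, together with the fact that regularity bounds generator degrees, gives the inequality.

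\textbf{Your route.} You invert $\mathbf{x}_C$ and show directly that $\NI_{\bf k}(G)$ becomes $PT$. You then use that projective dimension cannot increase under localization, and compute $\pd_T(T/J)=|D|$ from the Koszul complex; both steps are fine.

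\textbf{Comparison.} The combinatorial input is identical in the two arguments. The inequality $|N_G[j]\cap C|\le k_j-1$ for all $j$ gives $\NI_{\bf k}(G)\subseteq P$; this is the domination condition. The equality for $i\notin C$ gives $x_i\in J$; this is the minimality of the prime.

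Your argument is more self-contained. It bypasses Lemma~\ref{primaryDecom}, Alexander duality and Terai's theorem. In effect it proves the general fact that $\pd(S/I)\ge \height P$ for any minimal prime $P$ of $I$, applied to this particular $P$.

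The paper's route is shorter once the duality $\NI_{\bf k}(G)^\vee=DI_{\mathbf{m}'}(G)$ is in hand. It also presents $n-|C|$ as a generator degree of the multiple dominating ideal, which matches how the paper uses that duality elsewhere, e.g.\ in Proposition~\ref{ComBipart}.
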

	
	


	
	\begin{proof} 
		Since $C$ is  an ${\bf m}$-vertex cover of $G$, we have $|N_G[i]\cap C|\leq k_i-1$ for all $i\in C$ and  $|N_G[i]\cap C|= k_i-1$ for all $i\in [n]\setminus C$ which imply that 
		\begin{equation}\label{star1}|N_G[i]\cap ([n]\setminus C)|\geq \deg(i)+2-k_i,\,\, \, \, \,  \forall i\in C\end{equation} 
		and 
		\begin{equation}\label{star2}|N_G[i]\cap ([n]\setminus C)|=\deg(i)+2-k_i,\,\, \, \, \, \, \forall i\in [n]\setminus C.\end{equation} 
		Now, \eqref{star1} and \eqref{star2}
		 together with  Lemma~\ref{primaryDecom} implies that the ideal $(x_i: \ i\in [n]\setminus C)$ is a minimal prime ideal of $\NI_{\bf k}(G)$. So,
		$\textbf{x}_{[n]\setminus C}$ is a minimal monomial generator of  $\NI_{\bf k}(G)^\vee$  of degree $n-|C|$. Combining this with \cite[Theorem 2.1]{T} we obtain  $\pd((S/\NI_{\bf k}(G))=\reg(\NI_{\bf k}(G)^\vee)\geq n-|C|$. Using this and the Auslander-Buchsbaum formula, we get
		$\depth(S/\NI_{\bf k}(G))\leq |C|$.
	\end{proof}

	\section{$\bf k$-neighborhood ideal for the degree-vector}\label{sec1}
	
	 For a graph $G$ on $[n]$ and without isolated vertices, the vector ${\bf k}=(k_1,\ldots,k_n)$ with $k_i=\deg(i)$ for all $i$ is called the {\em degree-vector} of $G$.
	In this section, we study the regularity, projective dimension and Cohen-Macaulayness of $S/\NI_{\bf k}(G)$, when   ${\bf k}$ is the degree-vector of $G$.

	\smallskip
	
	In \cite[Theorem 3.2]{MS}, it was shown that   $\reg(S/\NI(G)) = \tau(G)$ for any chordal graph $G$. In Proposition \ref{chordalReg} we demonstrate that, in contrast to the case of the closed neighborhood ideal, when $\mathbf{k}$ is the degree-vector and $G$ is chordal, the difference $\operatorname{reg}(S/\NI_{\mathbf{k}}(G)) - \tau(G)$ can be arbitrarily large.

	\begin{Proposition}\label{chordalReg}
		For any positive integer $m$, there exists a connected chordal graph $G$ such that $$\reg(S/\NI_{\bf k}(G))=\tau(G)-m,$$ where  ${\bf k}$ is the degree-vector of $G$. 
	\end{Proposition}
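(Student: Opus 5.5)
The plan is to use a tree all of whose vertices are leaves or adjacent to a leaf, so that $\NI_{\bf k}(G)$ becomes the whole maximal ideal. The point is that for a leaf $\ell$ of $G$ we have $\deg(\ell)=1$, so $k_\ell=1$. The generators of $\NI_{\bf k}(G)$ coming from $\ell$ are then $\textbf{x}_W$ with $W\subseteq N_G[\ell]$ and $|W|=1$. These are exactly the variables $x_\ell$ and $x_v$, where $v$ is the unique neighbour of $\ell$. If every vertex is a leaf or a support vertex, then $\NI_{\bf k}(G)=(x_1,\ldots,x_n)$ and $\reg(S/\NI_{\bf k}(G))=0$. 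Meanwhile $\tau(G)$ can be made as large as we like.

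Concretely, for a given $m\geq 1$ I would take $G$ to be the corona of the path $P_m$. Its spine vertices are $v_1,\ldots,v_m$, with edges $v_iv_{i+1}$ for $1\leq i\leq m-1$. Each $v_i$ also gets one pendant leaf $\ell_i$. For $m=1$ this is just $P_2$. The graph $G$ is a tree, hence connected and chordal, and it has no isolated vertices.

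The proof then has two steps.

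\emph{Step 1: the ideal.} By the observation above, applied to each leaf $\ell_i$, both $x_{\ell_i}$ and $x_{v_i}$ lie in $\NI_{\bf k}(G)$ for every $i$. Hence $\NI_{\bf k}(G)=\mathfrak m=(x_1,\ldots,x_{2m})$. Then $S/\NI_{\bf k}(G)\cong K$, so $\reg(S/\NI_{\bf k}(G))=0$.

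\emph{Step 2: the vertex cover number.} The pendant edges $v_i\ell_i$ form a matching of size $m$, so $\tau(G)\geq m$. The set $\{v_1,\ldots,v_m\}$ covers every pendant edge and every spine edge, so $\tau(G)\leq m$. Therefore $\tau(G)=m$.

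Combining the two steps gives $\reg(S/\NI_{\bf k}(G))=0=\tau(G)-m$, as required. There is no real obstacle here; the only point needing care is checking that $k_\ell=1$ at a leaf forces the support vertex's variable into the ideal as well.
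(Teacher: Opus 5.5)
Your proof is correct, but it takes a different and more elementary route than the paper.

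You use the corona of $P_m$, where every vertex is a leaf or adjacent to a leaf. Since $k_\ell=1$ at every leaf $\ell$, both $x_\ell$ and the variable of its neighbour lie in $\NI_{\bf k}(G)$. Hence $\NI_{\bf k}(G)=\mathfrak m$ and $\reg(S/\NI_{\bf k}(G))=0$, while the perfect matching of pendant edges together with the cover $\{v_1,\ldots,v_m\}$ gives $\tau(G)=m$.

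The paper argues differently. For $m=1$ it takes $K_n$, where $\NI_{\bf k}(K_n)=I_{n,n-1}$. For $m\geq 2$ it attaches to $K_{2m+1}$ a leaf $a$ at vertex $1$ and a new vertex $b$ adjacent to $2,\ldots,m+2$. It then shows $\NI_{\bf k}(G)=(x_1,x_a)+I_{m+2,m+1}$, with the Veronese part in the variables $x_2,\ldots,x_{m+2},x_b$. Using the linear resolution of squarefree Veronese ideals, it gets $\reg(S/\NI_{\bf k}(G))=m$ while $\tau(G)=2m$.

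What your approach buys:
\begin{itemize}
\item It avoids the case split and needs no resolution theory at all.
\item It shows the gap is already unbounded among trees, in fact caterpillars, so the bound in Theorem~\ref{caterpillar} can be arbitrarily far from sharp.
\end{itemize}

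What the paper's approach buys: your examples are degenerate, since the regularity is identically zero. The paper's family shows that $\tau(G)-\reg(S/\NI_{\bf k}(G))$ can be large even when the regularity itself is large; there it equals the gap, namely $\tau(G)/2$.
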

	
	\begin{proof}
		Let $m=1$. Take the graph $G=K_{n}$ for some positive integer $n\geq 2$. Then $\tau(G)=n-1$ and $\NI_{\bf k}(G)=I_{n,n-1}$. Hence, $\reg(S/\NI_{\bf k}(G))=n-2=\tau(G)-1$. Now, assume that $m\geq 2$. 
		Let $H=K_{2m+1}$ be the complete graph on $[2m+1]$, and $G$ be the graph obtained from $H$ by adding two new vertices $a$ and $b$, and adding the edges $\{1,a\}$ and $\{i,b\}$ for any $2\leq i\leq m+2$. Then $G$ is a chordal graph. Moreover, since $m\geq 2$, we have $m+2<2m+1$. Thus $m+3$ is a vertex of $G$ with $\{a,m+3\}\notin E(G)$ and $\{b,m+3\}\notin E(G)$. It follows that $\{1,\ldots,2m+1\}\setminus \{m+3\}$ is a minimal vertex cover of $G$ of cardinality $2m$. Hence, $\tau(G)\leq 2m$. On the other hand, $G$ contains a clique of size $2m+1$. So any minimal vertex cover of $G$ has cardinality at least $2m$. Thus we obtain $\tau(G)=2m$. 
		
		Since $N_G[a]=\{1,a\}$, we have $x_1,x_a\in \NI_{\bf k}(G)$. Consider a generator $u_{j,G}/x_t$ of $\NI_{\bf k}(G)$ with $1\leq j\leq 2m+1$. Then  
		$\textbf{x}_{[2m+1]}=\prod_{\ell=1}^{2m+1}x_{\ell}\,|\, u_{j,G}$. So if $t\neq 1$, then $u_{j,G}/x_t$ is a multiple of $x_1\in \NI_{\bf k}(G)$. If $t=1$, then $\prod_{\ell=2}^{2m+1}x_{\ell}\,|\, (u_{j,G}/x_t)$, and hence
		$u_{b,G}/x_{b}=\prod_{\ell=2}^{m+2}x_{\ell}$ divides $u_{j,G}/x_t$. Therefore, we have $\NI_{\bf k}(G)=(x_1,x_a)+L,$ where $$L=(u_{b,G}/x_t:\, t\in[2,m+2]\cup\{b\}).$$ Notice that the ideal $L$ is the squarefree Veronese ideal $I_{m+2,m+1}$ in the set of variables $\{x_2,\ldots,x_{m+2}\}\cup\{x_b\}$.  We set $S'=K[x_2,\ldots,x_{m+2},x_b]$. Then $$\reg(S/\NI_{\bf k}(G))=\reg(S'/L)=\reg(L)-1=(m+1)-1=m=\tau(G)-m.$$      
	\end{proof}
	
	We denote  by $c(G)$ the number of connected components of $G$.
	In view of Proposition \ref{chordalReg}, we state the following conjecture.

	\begin{Conj}\label{chordalbound}
		Let $G$ be a chordal graph without isolated vertices, and let ${\bf k}$ be the degree-vector of $G$. Then $$\reg(S/\NI_{\bf k}(G))\leq \tau(G)-c(G).$$ 
	\end{Conj}
	
	\begin{Remark}\label{connected}
		Let $G$ be a graph  without isolated vertices and with the connected components  $G_1,\ldots,G_c$. Then $\reg(S/\NI_{\bf k}(G))=\sum_{i=1}^c \reg(S/\NI_{\bf k}(G_i))$. Hence, to prove Conjecture~\ref{chordalbound}, it suffices to consider connected chordal graphs. More precisely, it is enough to show that for every connected chordal graph $G$, and the degree-vector ${\bf k}$ of $G$, $$\reg(S/\NI_{\bf k}(G))\leq \tau(G)-1.$$ 
	\end{Remark}
	
	In the following, we verify the conjecture for some families of chordal graphs. By Remark~\ref{connected}, we may restrict our attention to connected chordal graphs.

	\smallskip 
	
	Recall that a \textit{matching} of a graph $G$ is a set of pairwise disjoint edges of $G$, and the \textit{matching number} of $G$, denoted by $a(G)$, is the maximum size of a matching of $G$. An {\em induced matching} of $G$ is a matching of $G$ which forms an induced subgraph of $G$. The {\em induced matching number} of $G$ is the maximum size of an induced matching of $G$ and is denoted by $\im(G)$.
	
	\begin{Lemma}\label{path}
		Let  $G$ be a path graph on $n\geq 2$ vertices, and let ${\bf k}$ be the degree-vector of $G$. Then  $\reg(S/\NI_{\bf k}(G))=\lfloor{\frac{n-2}{4}}\rfloor$. In particular, $\reg(S/\NI_{\bf k}(G))\leq \tau(G)-1.$
	\end{Lemma}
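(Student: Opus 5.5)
The plan is to compute $\NI_{\bf k}(P_n)$ explicitly, split off the linear part, and identify the remainder as the edge ideal of a chordal graph. Then the regularity equals its induced matching number by the theorem of Hà and Van Tuyl for chordal graphs.

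\emph{Explicit generators.} Label the path $1,2,\ldots,n$. The endpoints have degree $1$, so $k_1=k_n=1$. Since $N_G[1]=\{1,2\}$ and $N_G[n]=\{n-1,n\}$, the variables $x_1,x_2,x_{n-1},x_n$ are generators. Each internal vertex $i$ has $k_i=2$ and $N_G[i]=\{i-1,i,i+1\}$, which contributes $x_{i-1}x_i$, $x_ix_{i+1}$ and $x_{i-1}x_{i+1}$. Hence
\[
\NI_{\bf k}(P_n)=(x_1,x_2,x_{n-1},x_n)+I(P_n^2),
\]
where $P_n^2$ is the square of the path. After removing every generator divisible by one of the four variables, the remaining part is the edge ideal of $P_{n-4}^2$ on the vertices $3,\ldots,n-2$.

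To justify this, I check that every pair $\{j,j+1\}$ and $\{j,j+2\}$ with $3\le j$ and $j+2\le n-2$ actually occurs. The middle vertex $j+1$ is internal, so it does. Setting $S'=K[x_3,\ldots,x_{n-2}]$ gives
\[
\reg(S/\NI_{\bf k}(G))=\reg(S'/I(P_{n-4}^2)).
\]
For $n\le 5$ no edges survive, the ideal is generated by variables, and the regularity is $0=\lfloor (n-2)/4\rfloor$. For $n=2,3$ the four named variables overlap, which is harmless.

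\emph{Regularity via induced matchings.} $P_m^2$ is an interval graph, hence chordal, so $\reg(S'/I(P_m^2))=\im(P_m^2)$. Two edges of $P_m^2$ form an induced matching exactly when every vertex of one is at path-distance at least $3$ from every vertex of the other. Assuming without loss of generality that each edge consists of consecutive vertices, a greedy argument shows the optimum uses the period-$4$ pattern $\{1,2\},\{5,6\},\ldots$. This gives $\im(P_m^2)=\lfloor (m+2)/4\rfloor$ for $m\ge 2$. With $m=n-4$ we get $\lfloor (n-2)/4\rfloor$.

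The main care point is this induced matching count. For the upper bound, I will show that each matching edge together with the forced gap consumes at least $4$ consecutive positions, except the last edge, which consumes only $2$. This yields $4(t-1)+2\le m$ for an induced matching of size $t$.

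\emph{The inequality.} Finally, $\tau(P_n)=\lfloor n/2\rfloor$. A direct check that $\lfloor n/2\rfloor\ge \lfloor (n-2)/4\rfloor+1$ for all $n\ge 2$ gives $\reg(S/\NI_{\bf k}(G))\le\tau(G)-1$. The cases $n=2,3$ give equality, and for larger $n$ the bound follows since $n/2$ grows faster than $(n+2)/4$.
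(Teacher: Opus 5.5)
Your proof is correct and follows the paper's argument essentially step for step. You split off $(x_1,x_2,x_{n-1},x_n)$, identify the remainder as the edge ideal of the square of the path on $\{3,\ldots,n-2\}$, and invoke chordality together with the H\`a--Van Tuyl equality $\reg=\im$; you also supply the maximality argument for the period-$4$ induced matching (the reduction to consecutive-vertex edges and the bound $4(t-1)+2\le m$), which the paper only asserts as ``easy to see''.
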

	
	\begin{proof} 
		Consider a path graph $G: 1,2,\ldots,n$. Then
		$\NI_{\bf k}(G)=(x_1,x_2,x_{n-1},x_n,I(H))$, where
		$$I(H)=(x_ix_{i+1}: 3\leq i\leq n-3)+(x_ix_{i+2}: 3\leq i\leq n-4)$$ is the edge ideal of a graph $H$. Since $I(H)$ lives in $K[x_3,x_4,\ldots,x_{n-2}]$, we have  $\reg(S/\NI_{\bf k}(G))=\reg(S/I(H))$. 
		Observe that $H$ is a chordal graph. So by \cite[Corollary 6.9]{HV}, we have $\reg(S/I(H))=\im(H).$ It is easy to see that the set
		$$\{\{4k-1,4k\}: \, 1\leq k\leq \lfloor{\frac{n-2}{4}}\rfloor\}$$
		is an induced matching of $H$ of maximum cardinality. Thus $\im(H)=\lfloor{\frac{n-2}{4}}\rfloor$. This implies that  $\reg(S/\NI_{\bf k}(G))=\reg(S/I(H))=\lfloor{\frac{n-2}{4}}\rfloor.$
		
		The inequality $\reg(S/\NI_{\bf k}(G))\leq \tau(G)-1$ follows from the first statement and the facts that $\tau(G)=\lfloor{\frac{n}{2}}\rfloor$ and $\lfloor{\frac{n-2}{4}}\rfloor\leq \lfloor{\frac{n}{2}}\rfloor-1$ for any $n\geq 2$.  
	\end{proof} 
	
	 A {\em caterpillar} graph is a tree that has a path $P$, such that any vertex of the graph is either on $P$, or joined to that. Such a path is called a {\em central path} of the graph.

	\begin{Theorem}\label{caterpillar}
		Let  $G$ be a caterpillar graph, and let ${\bf k}$ be the degree-vector of $G$. Then $\reg(S/\NI_{\bf k}(G))\leq \tau(G)-1.$
	\end{Theorem}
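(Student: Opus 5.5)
We reduce $\NI_{\bf k}(G)$ to a variable ideal plus the edge ideal of a chordal graph, and then compare its induced matching number with a matching of $G$, just as in Lemma~\ref{path}.

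\textbf{Setup.} Choose a longest path $P: v_1,\ldots,v_t$ of $G$. Since $G$ is a caterpillar, $P$ is a central path: $v_1,v_t$ are leaves, and every vertex off $P$ is a leaf adjacent to some $v_i$. If $G=K_2$, then $\NI_{\bf k}(G)=(x_1,x_2)$ and $\reg=0=\tau(G)-1$, so assume $t\geq 3$. Let $L$ be the set of leaves and $Q$ the set of support vertices (neighbors of leaves). For a leaf $\ell$ with neighbor $q$ we have $k_\ell=1$ and $N_G[\ell]=\{\ell,q\}$, so $x_\ell,x_q\in \NI_{\bf k}(G)$. Let $s=|Q|\geq 1$ and $U=V(G)\setminus(L\cup Q)$. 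Every vertex of $U$ is an internal vertex $v_i$ of $P$ with $\deg(v_i)=2$.

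\textbf{Step 1: the reduction.} I claim that
$$\NI_{\bf k}(G)=(x_j:\ j\in L\cup Q)+I(H),$$
where $H$ is a graph on $U$. Its edges are the pairs $\{v_i,v_{i+1}\}\subseteq U$ and the pairs $\{v_{i-1},v_{i+1}\}\subseteq U$ with $v_i\in U$.

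\emph{Generators from $q\in Q$.} Here $k_q=\deg(q)$, so a generator is $\mathbf{x}_{N_G[q]\setminus\{w\}}$ for some $w\in N_G[q]$. Such a monomial is divisible by $x_q$ unless $w=q$. If $w=q$, it contains $x_\ell$ for a leaf $\ell$ adjacent to $q$. Either way it is redundant.

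\emph{Generators from $v_i\in U$.} Here $k_{v_i}=2$, so the generators are the products of two elements of $\{v_{i-1},v_i,v_{i+1}\}$. Those involving a vertex of $L\cup Q$ are redundant, and the remaining ones are exactly the edges of $H$ listed above.

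Since the variables in $L\cup Q$ do not occur in $I(H)$, we get $\reg(S/\NI_{\bf k}(G))=\reg(S/I(H))$.

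\textbf{Step 2: the structure of $H$.} Let $R_1,\ldots,R_p$ be the maximal runs of consecutive vertices of $P$ lying in $U$, with $|R_j|=r_j$. By the description of its edges, $H$ is the disjoint union of the squares of the paths $R_j$.

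These components are chordal, so \cite[Corollary 6.9]{HV} and additivity over components give
$$\reg(S/I(H))=\sum_j \im(R_j^2).$$
The computation in the proof of Lemma~\ref{path} (there the graph $H$ is the square of a path on $n-4$ vertices) gives
$$\im(R_j^2)=\Big\lfloor \tfrac{r_j+2}{4}\Big\rfloor.$$
One checks directly that $\lfloor \tfrac{r+2}{4}\rfloor\leq \lfloor \tfrac{r}{2}\rfloor$ for all $r\geq 1$: both sides are $0$ when $r=1$, and the inequality is clear for $r\geq 2$.

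\textbf{Step 3: comparison with $\tau(G)$.} Build a matching of $G$ as follows. Take one edge $\{q,\ell\}$ for each $q\in Q$, with $\ell$ a leaf; distinct supports use distinct leaves. Add a maximum matching of the path $R_j$, of size $\lfloor r_j/2\rfloor$, for each $j$. These edges are pairwise disjoint because $R_j\subseteq U$ is disjoint from $L\cup Q$. Hence
$$\tau(G)\geq a(G)\geq s+\sum_j\lfloor r_j/2\rfloor,$$
and therefore
$$\reg(S/\NI_{\bf k}(G))\leq \sum_j\lfloor r_j/2\rfloor\leq \tau(G)-s\leq \tau(G)-1.$$

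\textbf{Main obstacle.} The step needing the most care is Step 1, together with the description of $H$ in Step 2. The point is that a pair $\{v_{i-1},v_{i+1}\}$ appears as an edge only when the middle vertex $v_i$ has degree $2$. Because of this, $H$ splits exactly along the runs $R_j$, and the induced matching count from Lemma~\ref{path} applies to each run separately.
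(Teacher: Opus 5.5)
Your proof is correct and follows essentially the same route as the paper. Both arguments kill the leaf and support variables, reduce to the squares of the maximal runs of degree-two vertices on the central path, apply the induced-matching count from Lemma~\ref{path}, and bound $\tau(G)$ from below using one leaf edge per support vertex plus $\lfloor r_j/2\rfloor$ per run.

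The differences are small streamlinings. Because the run pieces live in disjoint sets of variables, you obtain the exact value $\reg(S/\NI_{\bf k}(G))=\sum_j\lfloor (r_j+2)/4\rfloor$, whereas the paper only bounds it via \cite[Corollary 4.8]{CHHK} and \cite[Corollary 3.2]{Her}. You also close with $\lfloor (r+2)/4\rfloor\le\lfloor r/2\rfloor$ and $s\ge 1$, instead of the paper's observation that there are fewer runs than support vertices.
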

	
	\begin{proof}
		Let $P$ be a central path of $G$, let $j_1,\ldots,j_r$ be the leaves of $G$, and let $i_1,\ldots,i_t$ be the vertices on $P$ which are adjacent to some leaf of $G$.
		Set $A=\{j_1,\ldots,j_r,i_1,\ldots,i_t\}$ and $B=V(G)\setminus A$. Then
		$$\NI_{\bf k}(G)=(x_i:\, i\in A)+(u_{i,G}/x_p:\, i\in  B,\, \, x_p|u_{i,G}).$$ 
		Observe that $\deg_G(i)=2$ for any vertex $i\in B$. Moreover, the induced subgraph $G[B]$ is the disjoint union of some path graphs, say $P_1,\ldots,P_h$, with $h\leq t-1$ such that each $P_\ell$ is a subgraph of $P$ with the property that the two endpoints (leaves) of $P_\ell$ are adjacent (in the graph $G$) to two vertices in $\{i_1,\ldots,i_t\}$, say $i_{\alpha_\ell}$ and $i_{\beta_\ell}$. By assumption, there are vertices $j_{\alpha_\ell},j_{\beta_\ell}\in A$ of degree one such that $\{i_{\alpha_\ell},j_{\alpha_\ell}\}\in E(G)$ and $\{i_{\beta_\ell},j_{\beta_\ell}\}\in E(G)$.
		For any $1\leq \ell\leq h$, let $P'_\ell$ be a path in $G$ obtained from $P_\ell$ by adjoining the vertices $i_{\alpha_\ell},i_{\beta_\ell},j_{\alpha_\ell},j_{\beta_\ell}$.    
		We claim that 
		\begin{equation}\label{P}
			\NI_{\bf k}(G)=(x_i:\, i\in A)+\sum_{\ell=1}^h\NI_{\bf k}(P'_\ell).
		\end{equation}
		
		Any vertex $s\in B$ is a vertex of $P'_\ell$ for some $\ell$. 
		Moreover, $N_G(s)=N_{P'_\ell}(s)$ which implies that $u_{s,G}=u_{s,P'_\ell}$.  Thus for any $s\in B$  we have  $u_{s,G}/x_p\in \NI_{\bf k}(G)$ if and only if
		$u_{s,G}/x_p\in \NI_{\bf k}(P'_\ell)$ for some $\ell$. On the other hand, for any  $s\in V(P'_\ell)$ with $s\notin B$, we have $s\in \{i_{\alpha_\ell},j_{\alpha_\ell}\}$. Thus, $x_{i_{\alpha_\ell}}x_{j_{\alpha_\ell}}|\, u_{s,G}$ which implies that $u_{s,G}/x_p\in (x_{i_{\alpha_\ell}},x_{j_{\alpha_\ell}})\subseteq (x_i:\, i\in A)\subseteq \NI_{\bf k}(G)$. This proves (\ref{P}).
		
		From (\ref{P}), \cite[Corollary 4.8]{CHHK} and \cite[Corollary 3.2]{Her}  we get  
		$$\reg(S/\NI_{\bf k}(G))\leq   \reg(S/\sum_{\ell=1}^h\NI_{\bf k}(P'_\ell))\leq  \sum_{\ell=1}^h \reg(S/(\NI_{\bf k}(P'_\ell)).$$ Let $n_\ell=|V(P_\ell)|$ for all $\ell$. Then 
		$|V(P'_\ell)|=n_\ell+4$, and by Lemma~\ref{path}, we have
		$\reg(S/(\NI_{\bf k}(P'_\ell))=\lfloor{\frac{n_\ell+2}{4}}\rfloor$ for all $\ell$. So $\reg(S/\NI_{\bf k}(G))\leq \sum_{\ell=1}^h \lfloor{\frac{n_\ell+2}{4}}\rfloor$. On the other hand,  $\tau(P_\ell)=\lfloor{\frac{n_\ell}{2}}\rfloor$ for any $1\leq \ell\leq h$. Thus for any minimal vertex cover $C$ of $G$, we have $|C\cap V(P_\ell)|\geq \lfloor{\frac{n_\ell}{2}}\rfloor$. Since $B$ is the disjoint union of $V(P_\ell)$'s, we get 
		$|C\cap B|\geq \sum_{\ell=1}^h\lfloor{\frac{n_\ell}{2}}\rfloor$.
		Moreover, $|C\cap A|\geq t$, since for any vertex $i_q\in \{i_1,\ldots,i_t\}$ and a leaf $j_{q'}$ adjacent to it, at least one of $i_q$ and   $j_{q'}$ belongs to $C$. 
		Therefore, $|C|=|C\cap B|+|C\cap A|\geq \sum_{\ell=1}^h\lfloor{\frac{n_\ell}{2}}\rfloor+t$ for any minimal vertex cover $C$ of $G$. This implies that 	\begin{align*}
			\tau(G)-1\geq \sum_{\ell=1}^h\lfloor{\frac{n_\ell}{2}}\rfloor+t-1\ &\geq\ \sum_{\ell=1}^h\lfloor{\frac{n_\ell}{2}}\rfloor+h\\
			&=\ \sum_{\ell=1}^h\lfloor{\frac{n_\ell+2}{2}}\rfloor\geq \sum_{\ell=1}^h\lfloor{\frac{n_\ell+2}{4}}\rfloor\geq \reg(S/\NI_{\bf k}(G)). 
		\end{align*}
	\end{proof} 
	
Next, we prove Conjecture~\ref{chordalbound} for two families of chordal graphs called clique-star graphs and generalized caterpillar graphs. 

Let $G$ be a graph, and let $\{F_1,\ldots,F_r\}$ be the set of maximal cliques of $G$. We say that $F_1$ is a {\em central clique} of $G$, if
$F_i\cap F_1\neq \emptyset$ for any $2\leq i\leq r$. 
Moreover, we call $G$ a {\em clique-star graph}, if it has a central clique, say $F_1$ and $F_i\cap F_j\subseteq F_1$ for any distinct integers  $i,j\in [r]$. A split graph is an example of a clique-star graph. Recall that a {\em split graph} is a graph in which the vertices can be partitioned into a clique and an independent set. 

A clique-star graph is shown in Figure~\ref{fig:clique-star}.

\begin{figure}[ht]
	\centering
	\begin{tikzpicture}[
		vertex/.style={circle,draw,fill=black,inner sep=1.2pt},
		every label/.style={font=\small}
		]
		
		\node[vertex] (c1) at (0,1.4) {};
		\node[vertex] (c2) at (1.05,1.75) {};
		\node[vertex] (c3) at (2.1,1.4) {};
		\node[vertex] (c4) at (2.1,0.35) {};
		\node[vertex] (c5) at (1.05,0) {};
		\node[vertex] (c6) at (0,0.35) {};
		
		\foreach \i/\j in {c1/c2,c1/c3,c1/c4,c1/c5,c1/c6,
			c2/c3,c2/c4,c2/c5,c2/c6,
			c3/c4,c3/c5,c3/c6,
			c4/c5,c4/c6,
			c5/c6}{
			\draw (\i) -- (\j);
		}
		
		\node[vertex] (a1) at (-0.7,0.98) {};
		\node[vertex] (a11) at (-0.7,1.68) {};
		\draw (a1)--(c1)--(a11)--(a1);
		
		\node[vertex] (b1) at (1.05,2.45) {};
		\draw (b1)--(c2);
		\draw (c1)--(b1)--(c3);
		
		\node[vertex] (b2) at (3.15,0.91) {};
		\draw (b2)--(c3);
		\draw (b2)--(c4);
		
		\node[vertex] (b3) at (-0.32,-0.18) {};
		\node[vertex] (d3) at (-1.02,0.35) {};
		\node[vertex] (b4) at (0.70,-0.53) {};
		
		\draw (c5)--(b3)--(c6)--(d3);
		\draw (c5)--(b4)--(c6);
		\draw (b4)--(b3);
		
	\end{tikzpicture}
	\caption{A clique-star graph with the central clique $K_6$.}
	\label{fig:clique-star}
\end{figure}

A graph $G$ is called a \emph{generalized caterpillar graph} if there exists a path graph $P$ such that $G$ is obtained by attaching complete graphs $K_1, K_2, \ldots, K_t$ to $P$, where each attachment is performed by identifying either a vertex or an edge of $K_i$ with a vertex or an edge of $P$. 

An example of a generalized caterpillar graph is illustrated in Figure~\ref{fig:gen-caterpillar}.

\begin{figure}[ht]
	\centering
	\begin{tikzpicture}[
		vertex/.style={circle,draw,fill=black,inner sep=1.4pt},
		every label/.style={font=\small}
		]
		
		\foreach \i in {1,...,7} {
			\node[vertex] (v\i) at (1.2*\i,0) {};
		}
		
		\draw (v1)--(v2)--(v3)--(v4)--(v5)--(v7);
		
		\node[vertex] (a1) at (1.5,1.05) {};
		\node[vertex] (b1) at (0.8,1.05) {};
		\draw (v1)--(a1)--(b1)--(v1);
		
		\node[vertex] (a2) at (2.4,1.05) {};
		\node[vertex] (b2) at (3.55,1.05) {};
		\node[vertex] (c2) at (2.94,1.5) {};
		\node[vertex] (d2) at (1.5,-0.95) {};
		\node[vertex] (e2) at (2.48,-0.95) {};
		\node[vertex] (f2) at (3.14,-0.95) {};
		\node[vertex] (g2) at (4.24,-0.95) {};
		\draw (v2)--(a2)--(b2)--(v3)--(f2);
		\draw (v3)--(g2)--(f2);
		\draw (a2)--(c2)--(b2)--(a2);
		\draw (d2)--(v2)--(b2);
		\draw (e2)--(v2)--(c2)--(v3)--(a2);
		\node[vertex] (a3) at (6.56,1.05) {};
		\node[vertex] (b3) at (6.58,-0.95) {};
		\draw (v5)--(a3)--(v6);
		\draw (v5)--(b3)--(v6);
		
	\end{tikzpicture}
	\caption{A generalized caterpillar graph obtained from a path graph $P_7$.}
	\label{fig:gen-caterpillar}
\end{figure}

	
	\begin{Theorem}\label{clique-star}
		Let $G$ be one of the following graphs:
		\begin{enumerate}
			\item[(a)]  A clique-star graph.
			\item[(b)] A generalized caterpillar graph. 
		\end{enumerate} 
		Let ${\bf k}$ be the degree-vector of $G$.  Then $\reg(S/\NI_{\bf k}(G))\leq \tau(G)-1$. 
	\end{Theorem}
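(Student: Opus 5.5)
The plan is to imitate the proof of Theorem~\ref{caterpillar}: write $\NI_{\bf k}(G)$ as a sum of simpler ideals whose regularities we know, apply subadditivity \cite[Corollary 3.2]{Her}, and discard variable generators using \cite[Corollary 4.8]{CHHK}. We then compare the resulting sum with a lower bound for $\tau(G)$ coming from the clique structure. The key fact throughout is that for the degree-vector the generators are the monomials $u_{i,G}/x_p$ with $p\in N_G[i]$. If $i$ is simplicial with $N_G[i]=F$ a clique, these generators form exactly the squarefree Veronese ideal $I_{|F|,|F|-1}$ on the variables of $F$. Its quotient has regularity $|F|-2$, and $\reg=0$ when $|F|=2$, where the ideal is generated by variables.

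For (a), let $F_1$ be the central clique and $F_2,\ldots,F_r$ the other maximal cliques. Since $F_i\cap F_j\subseteq F_1$ for $i\ne j$, every vertex of $F_j\setminus F_1$ ($j\ge 2$) lies only in $F_j$. Such a vertex is therefore simplicial with closed neighborhood $F_j$, so $I_j:=I_{F_j,|F_j|-1}\subseteq \NI_{\bf k}(G)$. Next we handle $i\in F_1$ with $p\in N_G[i]$. The monomial $u_{i,G}/x_p$ is divisible by $\mathbf{x}_{F_j\setminus\{q\}}$ for any $j\ge 2$ with $i\in F_j$ and any $q\in F_j$ with $q\neq p$ or $q=p$. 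Hence it is redundant unless $N_G[i]=F_1$, in which case it lies in $I_{F_1,|F_1|-1}$.

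This should give $\NI_{\bf k}(G)=\sum_{j\ge 1} I_j$, where $I_1$ is either $I_{F_1,|F_1|-1}$ (if $F_1$ has a simplicial vertex) or an ideal contained in $\sum_{j\ge2}I_j$. Subadditivity then yields $\reg(S/\NI_{\bf k}(G))\le\sum_j(|F_j|-2)$, summing over the cliques actually present. To finish (a) we would bound $\tau(G)$ from below by choosing a maximum independent set $T$, which meets each $F_j$ in at most one vertex. Then $\tau(G)=n-|T|\geq |F_1|-1+\sum_{j\ge2}|F_j\setminus F_1| -(\text{number of }j\ge 2\text{ with }T\cap F_j\not\subseteq F_1)$. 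Comparing this term by term with $\sum(|F_j|-2)$ should give the bound. The slack comes from $|F_j\cap F_1|\ge1$, which gives $|F_j\setminus F_1|-1\ge |F_j|-2-(|F_j\cap F_1|-1)$. I expect a cruder sum to fail, so at this point I would refine the decomposition. The idea is to group each $F_j$ together with $F_1$, use $F_j\cap F_1\ne\emptyset$, and apply the Veronese regularity only to the part of $F_j$ outside $F_1$ after removing generators already divisible by others. This bookkeeping is the main obstacle in (a).

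For (b), let $P$ be the path. Attached cliques meeting $P$ in a single vertex or an edge contribute simplicial vertices, hence Veronese ideals $I_{|K_i|,|K_i|-1}$, and cliques of size $2$ contribute variables. As in the caterpillar proof, we split $P$ at vertices carrying attachments. Vertices of $P$ with no attachments have degree $2$, and the segments they form can be enlarged to paths $P'_\ell$ whose ideals are computed by Lemma~\ref{path}. The main work, and the step I expect to be hardest, is proving the analogue of \eqref{P}. One must show that every generator $u_{i,G}/x_p$ for an attachment vertex $i\in P$ is divisible by a generator of some attached clique's Veronese ideal or of some $\NI_{\bf k}(P'_\ell)$. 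I would try to show this first for attachments along an edge $\{v,v+1\}$, since then $N_G[v]$ contains the whole attached clique. After that, subadditivity gives $\reg\le\sum_\ell\lfloor(n_\ell+2)/4\rfloor+\sum_i(|K_i|-2)$. The proof then concludes, as in Theorem~\ref{caterpillar}, by showing that any vertex cover must contain $|K_i|-1$ vertices of each attached clique and $\lfloor n_\ell/2\rfloor$ vertices of each segment. One of the attachments supplies the extra $-1$.
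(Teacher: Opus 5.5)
Your decomposition in (a) is correct: $\NI_{\bf k}(G)=\sum_{j\ge 2}I_{F_j,|F_j|-1}$, plus $I_{F_1,|F_1|-1}$ when $F_1$ has a free vertex. When every $|F_j\cap F_1|=1$, your subadditivity estimate does reach $\tau(G)-1$. But the step you defer as ``bookkeeping'' is a genuine gap. No subadditivity argument can close it once two cliques share two or more vertices.

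Take the diamond on $\{1,2,a,b\}$, i.e.\ the triangles $\{1,2,a\}$ and $\{1,2,b\}$ glued along $\{1,2\}$. It is a clique-star graph, and also a generalized caterpillar (two triangles on one edge of $P=P_2$, a configuration that also occurs in Figure~\ref{fig:gen-caterpillar}). Here $\NI_{\bf k}(G)=(x_1x_2,x_1x_a,x_2x_a,x_1x_b,x_2x_b)$ and $\reg(S/\NI_{\bf k}(G))=1=\tau(G)-1$. This ideal contains no variables, so every nonzero summand $J$ of any splitting has $\reg(S/J)\ge 1$. Hence every nontrivial splitting gives a bound of at least $2$.

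Similarly, for $K_{m+1}$ minus an edge ($m\ge 3$), the ideal is generated in degree $m-1$ and $\tau(G)-1=m-2$. Any nontrivial splitting gives at least $2(m-2)$. So ``applying the Veronese regularity only to the part of $F_j$ outside $F_1$'' cannot be justified this way. You would need an exact regularity computation for sums of overlapping Veronese ideals, and the proposal does not provide one.

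Part (b) hits the same obstruction, and two further steps fail there. First, the count ``$|K_i|-1$ cover vertices in each attached clique'' double counts path vertices shared by several cliques: it gives $4$ for the diamond, while $\tau=2$. Second, the analogue of \eqref{P} is false in general. $\NI_{\bf k}(P'_\ell)$ contains the variables of the two outermost vertices at each end of $P'_\ell$, and these need not lie in $\NI_{\bf k}(G)$. For instance, $x_{i_\alpha}\notin\NI_{\bf k}(G)$ if $i_\alpha$ carries a triangle but no leaf. An upper bound on regularity needs equality of ideals here, not containment.

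The missing idea is to replace subadditivity by the exact-sequence inequality $\reg(S/I)\le\max\{\reg(S/(I:x))+1,\reg(S/(I,x))\}$ inside an induction on $n$; this never charges twice for a shared vertex. The paper first treats the case where all simplicial vertices are leaves: then $G$ is either a central clique with pendant edges (computed directly) or a caterpillar (Theorem~\ref{caterpillar}). Otherwise it takes a simplicial vertex $s$ with $N_G(s)=\{i_1,\ldots,i_r\}$, $r\ge 2$.

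Writing $\NI_{\bf k}(G)=\sum_j(\NI(G):x_j)$ gives $(\NI_{\bf k}(G):x_{i_\ell})=\NI_{\bf k}(G\setminus i_\ell)$. Swapping $s$ for $i_\ell$ in a minimum vertex cover shows $\tau(G\setminus i_\ell)\le\tau(G)-1$. So by induction each colon term, plus $1$, is still at most $\tau(G)-1$. After adjoining $x_{i_1},\ldots,x_{i_r}$, one is left with $(\NI_{\bf k}(G\setminus s),x_{i_1},\ldots,x_{i_r})$. This is again covered by induction, because the components of $G\setminus s$ and $G\setminus i_\ell$ stay in the same class.
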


	\begin{proof} 
		Let $V(G)=[n]$. We prove the statement by induction on $n\geq 2$. If $n=2$, then $\NI_{\bf k}(G)=(x_1,x_2)$. So $\reg(S/\NI_{\bf k}(G))=0=\tau(G)-1$. Now, let $n>2$. Since $G$ is chordal, it has a simplicial vertex. 
		We consider the following cases:
		
		\textbf{Case 1}. 
		Assume that any simplicial vertex of $G$ has degree one. We discuss (a) and (b) separately in this case.
		
		(a) Let $F=\{1,2,\ldots,m\}$ be the central clique of $G$, and $F_1,\ldots,F_q$ be the maximal cliques of $G$ other than $F$. The assumption of case 1 implies that $|F_\ell|=2$ for any $1\leq \ell\leq q$. Then $F_\ell=\{i_\ell,j_\ell\}$, where  $i_\ell\in F$ for all $\ell$ (with $i_\ell$'s not necessarily distinct). Then $x_{i_1},\ldots,x_{i_q},x_{j_1},\ldots,x_{j_q}\in \NI_{\bf k}(G)$. If $m=1$, then $G$ is a star graph and $\NI_{\bf k}(G)$ is generated by variables. So 
		$\reg(S/\NI_{\bf k}(G))=0=\tau(G)-1$. Let $m\geq 2$. 
		If $i_1=\cdots=i_q$, then $$\NI_{\bf k}(G)=(x_{i_1},x_{j_1},\ldots,x_{j_q},(x_1\cdots x_m)/x_{i_1}).$$
	   Therefore, $\reg(S/\NI_{\bf k}(G))=m-2=\tau(G)-1$, and the assertion holds. Now, assume that $i_\ell\neq i_{\ell'}$ for some $\ell$ and $\ell'$. Then for any $j\in F$
		we have $x_{i_\ell}x_{i_{\ell'}}\,|\,u_{j,G}$. So  $u_{j,G}/x_k\in (x_{i_\ell},x_{i_{\ell'}})$ for any $x_k$ dividing $u_{j,G}$. Hence,
		$\NI_{\bf k}(G)=(x_{i_1},\ldots,x_{i_q},x_{j_1},\ldots,x_{j_q})$ and  $\reg(S/\NI_{\bf k}(G))=0\leq \tau(G)-1$.
		
		\smallskip
		
		(b) Let $G$ be a generalized caterpillar graph. Since any simplicial vertex of $G$ has degree one, $G$ is indeed a caterpillar graph. So by Theorem \ref{caterpillar} the assertion holds.


		\smallskip
		
		\textbf{Case 2}.  
		Assume that $G$ has a simplicial vertex $s$ with $\deg(s)\geq 2$. Let  $N_G(s)=\{i_1,\ldots,i_r\}$.  
		By \cite[Corollary 4.8]{CHHK}, we have 
		\begin{equation}\label{i_1}
			\reg(S/\NI_{\bf k}(G))\leq \max\{\reg(S/(\NI_{\bf k}(G):x_{i_1}))+1,\reg(S/(\NI_{\bf k}(G),x_{i_1}))\}.
		\end{equation} 	
		
		First we show that $\NI_{\bf k}(G)=\sum_{j\in V(G)} (\NI(G):x_j)$. Any minimal monomial generator of $\NI_{\bf k}(G)$ is of the form $u_{i,G}/x_j$, where $i\in V(G)$ and $j\in N_G[i]$. Hence, $u_{i,G}/x_j\in (\NI(G):x_j)$. On the other hand, for any monomial generator $u_{i,G}:x_j$ of $(\NI(G):x_j)$ we have either $u_{i,G}:x_j=u_{i,G}\in \NI(G)\subset \NI_{\bf k}(G)$ or $u_{i,G}:x_j=u_{i,G}/x_j\in \NI_{\bf k}(G)$.  
		
		Set $G_\ell=G\setminus x_{i_\ell}$ for any $1\leq \ell\leq r$.
		Then 
		\begin{equation}\label{hava}
			(\NI_{\bf k}(G):x_{i_\ell})=\sum_{j\in V(G)} ((\NI(G):x_j):x_{i_\ell})=\sum_{j\in V(G)} ((\NI(G):x_{i_\ell}):x_j).
		\end{equation} 	
		
		The inclusion $N_G[s]\subseteq N_G[{i_\ell}]$ implies that $u_{s,G}| u_{i_\ell,G}$ for all $1\leq \ell\leq r$. Hence,
		$$\NI(G)=(u_{s,G})+(u_{j,G}:\, j\in V(G)\setminus\{s,i_1,\ldots,i_\ell\}).$$
		We have $(u_{s,G}:x_{i_\ell})=u_{s,G_{\ell}}$. Moreover, for any vertex $j\in V(G)\setminus\{s,i_1,\ldots,i_r\}$,  if  $j\in N_G(i_\ell)$, then we have $(u_{j,G}:x_{i_\ell})=u_{j,G}/x_{i_\ell}=u_{j,G_{\ell}}$, and if $j\notin N_G(i_\ell)$, then $(u_{j,G}:x_{i_\ell})=u_{j,G}=u_{j,G_{\ell}}$. 
		Therefore, $$(\NI(G):x_{i_\ell})=(u_{s,G_{\ell}})+(u_{j,G_{\ell}}:\, j\in V(G_{\ell})\setminus\{s,i_1,\ldots,i_r\})=\NI(G_{\ell}).$$ 
		This together with (\ref{hava}) implies that 
		$(\NI_{\bf k}(G):x_{i_\ell})=\sum_{j\in V(G)} (\NI(G_{\ell}):x_j)$.
		Since $x_{i_\ell}$ divides no minimal monomial generator of $\NI(G_{\ell})$, we have $(\NI(G_{\ell}):x_{i_\ell})=\NI(G_{\ell})$. Hence, 
		$\sum_{j\in V(G)} (\NI(G_{\ell}):x_j)=\sum_{j\in V(G_{\ell})} (\NI(G_{\ell}):x_j)=\NI_{\bf k}(G_{\ell})$. So we obtain $(\NI_{\bf k}(G):x_{i_\ell})=\NI_{\bf k}(G_{\ell})$ for all $1\leq \ell\leq r$.  
		
		Observe that if $G$ is a clique-star (respectively, a generalized caterpillar graph), then $G_{\ell}$ is a graph whose connected components are clique-star (respectively, generalized caterpillar) graphs for any $1\leq \ell\leq r$. Thus
		by induction hypothesis, $$\reg(S/(\NI_{\bf k}(G):x_{i_\ell}))=\reg(S/(\NI_{\bf k}(G_{\ell})))\leq\tau(G_{\ell})-c(G_{\ell}).$$
		We show that $\tau(G_{\ell})+1\leq \tau(G)$ for any $1\leq \ell\leq r$.
		Let $D$ be a minimal vertex cover of $G$ with $\tau(G)=|D|$. If $i_\ell\in D$, then $D\setminus \{i_\ell\}$ is a vertex cover of $G_{\ell}$. Thus  $\tau(G_{\ell})\leq |D|-1=\tau(G)-1$. If $i_\ell\notin D$, then $$\{s,i_1,\ldots,i_{\ell-1},i_{\ell+1},\ldots,i_r\}\subseteq N_G(i_\ell)\subseteq D.$$ So $D'=(D\setminus\{s\})\cup \{i_\ell\}$ is a vertex cover of $G$ with $i_\ell\in D'$ and $|D'|=|D|=\tau(G)$. Thus the same argument as in the case $i_\ell\in D$ implies that $\tau(G_{\ell})+1\leq\tau(G)$. Using this fact, we get the inequality
		$$\reg(S/(\NI_{\bf k}(G):x_{i_1}))\leq\tau(G_{1})-c(G_1)\leq \tau(G_{1})-1\leq \tau(G)-2.$$  This together with  (\ref{i_1}) implies that
		
		\begin{equation}\label{eq:1}
			\reg(S/\NI_{\bf k}(G))\leq \max\{\tau(G)-1,\reg(S/(\NI_{\bf k}(G),x_{i_1}))\}.
		\end{equation} 	
		
		\smallskip
		
		To complete the proof, it remains to show that $\reg(S/(\NI_{\bf k}(G),x_{i_1}))\leq \tau(G)-1$. For any $1\leq \ell\leq r$, we set $J_\ell=(\NI_{\bf k}(G),x_{i_1},\ldots,x_{i_\ell})$. Applying \cite[Corollary 4.8]{CHHK} to $J_\ell$ we have  
		\begin{equation}\label{J_i}
			\reg(S/J_\ell)\leq \max\{\reg(S/(J_\ell:x_{i_{\ell+1}}))+1,\reg(S/J_{\ell+1})\}.
		\end{equation}

		For each $1\leq \ell\leq r-1$, we have $$(J_\ell:x_{i_{\ell+1}})=((\NI_{\bf k}(G):x_{i_{\ell+1}}),x_{i_1},\ldots,x_{i_\ell})=(\NI_{\bf k}(G_{\ell+1}),x_{i_1},\ldots,x_{i_\ell}),$$ and hence by \cite[Corollary 4.8]{CHHK}, $\reg(S/(J_\ell:x_{i_{\ell+1}}))\leq \reg(S/\NI_{\bf k}(G_{\ell+1}))\leq\tau(G_{\ell+1})-c(G_{\ell+1})$.
		Using this, the equality $\tau(G_{\ell+1})+1\leq\tau(G)$ and the inequalities (\ref{J_i}), we obtain
		\begin{equation}\label{inductIneq} 
			\reg(S/J_\ell)\leq \max\{\tau(G)-1,\reg(S/J_{\ell+1})\}
		\end{equation} 	
		for any $1\leq \ell\leq r-1$. 
		Since $r=\deg(s)\geq 2$, for any $j\in \{s,i_1,\ldots,i_r\}$ and any $x_p$ dividing $u_{j,G}$ we have $u_{j,G}/x_p\in (x_{i_1},\ldots,x_{i_r})$. Similarly, for any $j\in \{i_1,\ldots,i_r\}$ and any $x_p$ dividing $u_{j,G\setminus s}$ we have  $(u_{j,G\setminus s})/x_p\in (x_{i_1},\ldots,x_{i_r})$. 
		This implies that
		$J_r=(\NI_{\bf k}(G),x_{i_1},\ldots,x_{i_r})=(\NI_{\bf k}(G\setminus s),x_{i_1},\ldots,x_{i_r})$. If $G$ is a  clique-star (respectively, a generalized caterpillar) graph, then $G\setminus s$ is a clique-star (respectively, a generalized caterpillar) graph. So by induction hypothesis, 
		$\reg(S/(\NI_{\bf k}(G\setminus s))\leq\tau(G\setminus s)-1\leq \tau(G)-1.$ Using this and \cite[Corollary 4.8]{CHHK}, we obtain
		$$\reg(S/J_r)\leq \reg(S/(\NI_{\bf k}(G\setminus s))\leq\tau(G\setminus s)-1\leq \tau(G)-1.$$ Applying this and multiple using of inequalities (\ref{inductIneq}), we conclude that $\reg(S/J_1)\leq \tau(G)-1$. Combining the last inequality with (\ref{eq:1}), we get $\reg(S/\NI_{\bf k}(G))\leq \tau(G)-1$.  
	\end{proof}

	Next, we present a family of graphs for which $S/\NI_{\bf k}(G)$ is Cohen-Macaulay, and $\reg(S/\NI_{\bf k}(G))$ can be described combinatorially. 
	A vertex $v$ is called a {\em free vertex} of $G$, if it belongs to exactly one maximal clique of $G$. We denote by $\delta(G)$ the minimum degree of a vertex in $G$. 
	
	\begin{Theorem}\label{chordalCMgeneralized}
		Let $G$ be a graph without isolated vertices whose vertex set $V(G)$ is the  disjoint union of maximal cliques of $G$  which admit a free vertex. Let $\ell$ be a positive integer with $\ell\leq \delta(G)$ and ${\bf k}=(k_1,\ldots,k_n)$ with $k_i=\deg(i)+1-\ell$. Then the following statements hold.
		\begin{enumerate}
			\item [(a)] 	$S/\NI_{\bf k}(G)$ is Cohen-Macaulay. 
			
			\item [(b)] $\reg(S/\NI_{\bf k}(G))=n-r(\ell+1)$, where $r$ is the number of maximal cliques of $G$ admitting a free vertex. 
			
			\item [(c)] All minimal $(\ell+1)$-tuple dominating sets of $G$ have the same cardinality.
		\end{enumerate}	 
		
		In particular, if $\bf k$ is the degree-vector of $G$, then $S/\NI_{\bf k}(G)$ is Cohen-Macaulay and $\reg(S/\NI_{\bf k}(G))=n-2r$.  
	\end{Theorem}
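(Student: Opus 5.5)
The plan is to describe the minimal prime ideals of $\NI_{\bf k}(G)$ explicitly via Lemma~\ref{primaryDecom}, and then recognize $\NI_{\bf k}(G)$ as a sum of squarefree Veronese ideals in pairwise disjoint sets of variables. Let $F_1,\ldots,F_r$ be the maximal cliques with a free vertex, so that $V(G)=F_1\sqcup\cdots\sqcup F_r$. With $k_i=\deg(i)+1-\ell$, the vector of Lemma~\ref{primaryDecom} is $m_i=\deg(i)-k_i+2=\ell+1$ for all $i$. Hence the minimal primes of $\NI_{\bf k}(G)$ correspond to the minimal $(\ell+1)$-tuple dominating sets of $G$.

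\emph{Step 1 (minimal dominating sets).} I will show that $D$ is a minimal $(\ell+1)$-tuple dominating set if and only if $|D\cap F_j|=\ell+1$ for every $j$.
\begin{itemize}
\item \emph{Necessity.} If $v_j$ is a free vertex of $F_j$, then $N_G[v_j]=F_j$. So any $(\ell+1)$-tuple dominating set meets each $F_j$ in at least $\ell+1$ vertices. This is possible because $|F_j|=\deg(v_j)+1\ge \delta(G)+1\ge \ell+1$.
\item \emph{Sufficiency.} Every vertex $i$ lies in some $F_j$, and $F_j\subseteq N_G[i]$. Hence any $D$ with $|D\cap F_j|\ge \ell+1$ for all $j$ is $(\ell+1)$-tuple dominating.
\item \emph{Minimality.} The minimal such sets are exactly those with $|D\cap F_j|=\ell+1$ for all $j$. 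All of them have cardinality $r(\ell+1)$, which proves (c).
\end{itemize}

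\emph{Step 2 (explicit form of the ideal).} By Step 1,
\[
\NI_{\bf k}(G)=\bigcap_{D}(x_d:d\in D)=\sum_{j=1}^r I_{F_j,\,|F_j|-\ell},
\]
where $I_{F_j,t}$ denotes the squarefree Veronese ideal of degree $t$ in the variables of $F_j$. To justify the middle equality, note that a prime $(x_d:d\in D)$ contains the sum exactly when $D\cap F_j$ meets every $(|F_j|-\ell)$-subset of $F_j$ for each $j$. This holds exactly when $|D\cap F_j|\ge \ell+1$ for each $j$. Since the sum is squarefree, it equals the intersection of its minimal primes, which are the ones in Step 1. Equivalently, $DI_{\bf m}(G)=\NI_{\bf k}(G)^\vee$ is the product $\prod_j I_{F_j,\ell+1}$ in disjoint variables.

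\emph{Step 3 (CM property and regularity).} Set $S_j=K[x_i:i\in F_j]$. Then $S/\NI_{\bf k}(G)\cong\bigotimes_j S_j/I_{F_j,|F_j|-\ell}$. Each squarefree Veronese quotient $S_j/I_{F_j,t}$ is Cohen--Macaulay with $\reg=t-1$, since it has a linear resolution and its dual is again squarefree Veronese with linear resolution, so Eagon--Reiner applies. A tensor product over $K$ of Cohen--Macaulay algebras in disjoint variables is Cohen--Macaulay, which gives (a). Regularity is additive under such tensor products, so
\[
\reg(S/\NI_{\bf k}(G))=\sum_j(|F_j|-\ell-1)=n-r(\ell+1),
\]
which gives (b). The final claim is the case $\ell=1$, which is allowed because $G$ has no isolated vertices, so $\delta(G)\ge 1$.

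The main obstacle is Step 2: one must check the equality of ideals carefully, in particular that no generator $\mathbf{x}_W$ with $W\subseteq N_G[i]$ spanning several cliques escapes the description. Passing through the primes, as above, avoids this check, so the argument relies on Lemma~\ref{primaryDecom} rather than on direct generator manipulation.
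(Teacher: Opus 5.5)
Your proof is correct. Its overall structure matches the paper's: both write $\NI_{\bf k}(G)=\sum_j I_j$, where $I_j$ is the squarefree Veronese ideal of degree $|F_j|-\ell$ in the variables of $F_j$. Both then obtain (a) and (b) from the tensor product decomposition over disjoint variable sets. The routes differ in how that decomposition is verified and in how (c) is obtained.

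\begin{itemize}
\item \emph{Paper's route.} It checks generators directly. A free vertex $z$ of $F_i$ has $N_G[z]=F_i$, so its generators give exactly $I_i$. For a non-free vertex $h\in F_i$, the identity $k_h=k_{z_i}+|N_G[h]\setminus F_i|$ forces every $k_h$-subset of $N_G[h]$ to contain at least $k_{z_i}$ vertices of $F_i$. It then deduces (c) from (a), using that Cohen--Macaulay ideals are unmixed together with Corollary~\ref{height}.
\item \emph{Your route.} You compute the minimal $(\ell+1)$-tuple dominating sets explicitly through Lemma~\ref{primaryDecom}. You then identify the two squarefree ideals by comparing their minimal primes.
\end{itemize}

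What each buys:
\begin{itemize}
\item Your argument proves (c) purely combinatorially, independently of Cohen--Macaulayness.
\item It also gives the extra information that the Alexander dual $DI_{\bf m}(G)$ is the product $\prod_j I_{F_j,\ell+1}$ in disjoint variables.
\item The paper's argument is more self-contained, since it never needs the duality with minimal primes.
\item The direct generator check you call the main obstacle is in fact only a short counting argument, so avoiding it is not really a gain.
\end{itemize}
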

	
	\begin{proof}
		(a) By assumption, $V(G)=\bigsqcup_{i=1}^r V(G_i)$, where $G_1,\ldots, G_r$ are maximal cliques of $G$ and  each $G_i$  has a free vertex $z_i$. 
		We claim that $$\NI_{\bf k}(G)=\sum_{i=1}^r I_i,$$ where each $I_i$ is a  squarefree Veronese ideal generated by all squarefree monomials of degree 
		$k_{z_i}=\deg(z_i)+1-\ell$ in the set of variables $\{x_j: \, j\in V(G_i)\}$.
		Since for each free vertex $z\in V(G_i)$, $\deg(z)=\deg(z_i)$ and $N_G[z]=N_G[z_i]=V(G_i)$, we have $$(\textbf{x}_W: \, W\subseteq N_G[z],\,  |W|=k_z,\,  z {\text{  is a free vertex of}}\ G_i)=I_i.$$
		
		Now suppose $h$ is a non-free vertex of $G_i$. Then $$|N_G[h]\setminus V(G_i)|=\deg(h)-\deg(z_i),$$ and $$k_h=\deg(h)+1-\ell=\deg(z_i)+1-\ell+\deg(h)-\deg(z_i)=k_{z_i}+\deg(h)-\deg(z_i).$$
		So, for any $W\subseteq N_G[h]$ with $|W|=k_h$ we have $|W\cap V(G_i)|\geq k_{z_i}$. Otherwise, $$k_h=|W|=|W\cap V(G_i)|+|W\cap (V(G)\setminus V(G_i))|<k_{z_i}+|N_G[h]\setminus V(G_i)|=k_h,$$ which is a contradiction. Hence, $|W\cap V(G_i)|\geq k_{z_i}$  which ensures that $\textbf{x}_W\in I_i$. This shows that  $\NI_{\bf k}(G)=\sum_{i=1}^r I_i$.

		Let $S_i=K[x_j: \, j\in V(G_i)]$. Then $S/\NI_{\bf k}(G)=\bigotimes_{i=1}^r (S_i/I_i)$. Since each $S_i/I_i$ is Cohen-Macaulay, so is  $S/\NI_{\bf k}(G)$.
		
			\smallskip
		
		(b) As it was shown in (a), we have  $\NI_{\bf k}(G)=\sum_{i=1}^r I_i,$ where each $I_i$ is a  squarefree Veronese ideal generated by all squarefree monomials of degree 
		$k_{z_i}=\deg(z_i)+1-\ell$ in the set of variables  $\{x_j: \, j\in V(G_i)\}$. Thus $$\reg(S/\NI_{\bf k}(G))=\sum_{i=1}^r \reg(S_i/\NI_{\bf k}(G_i))=
		\sum_{i=1}^r (\deg(z_i)-\ell).$$
		
		Let $n_i=|V(G_i)|$ for all $i$. Since $z_i$ is a free vertex of the clique $G_i$, we have $\deg(z_i)=n_i-1$ for all $i$. Thus  
		$\sum_{i=1}^r (\deg(z_i)-\ell)=\sum_{i=1}^r (n_i-1-\ell)=n-r(\ell+1)$.
		\smallskip
		
		(c) follows from (a), Lemma \ref{height},   and the fact that any Cohen-Macaulay ideal is unmixed, that is all of its minimal prime ideals have the same height. 
		 
		\smallskip
		
		The last statement follows by taking $\ell=1$. 
	\end{proof}
	
	Let $G$ be an arbitrary graph, and let $\ell$ and ${\bf k}$ be as in Theorem~\ref{chordalCMgeneralized}. It may occur that $S/\NI_{\bf k}(G)$ is Cohen-Macaulay even though $V(G)$ is not a disjoint union of maximal cliques of $G$ admitting free vertices. The following example illustrates this fact.

	\begin{Example}
		Let $G$ be the graph on $[8]$ with $$E(G)=\{\{1,2\},\{2,3\},\{2,4\},\{4,5\},\{5,6\},\{6,7\},\{6,8\}\},$$ and let $\ell=1$. Then  $\NI_{\bf k}(G)=(x_1,x_2,x_3,x_6,x_7,x_8,x_4x_5)$ is a complete intersection ideal. However, $V(G)$ is not the disjoint union of maximal cliques of $G$ admitting free vertices.
	\end{Example}
	
We finish this section with the following two corollaries which present lower bounds for $\pd(S/\NI_{\bf k}(G))$ for products of path graphs using the concept of ${\bf m}$-vertex cover.
	
		\begin{Corollary}\label{product1}
	Let $G=P_2\times P_r$ for an odd number $r$, and let ${\bf k}$ be the degree-vector of $G$. Then 	$$
	\pd(S/\NI_{\bf k}(G))\geq r+1
	\qquad\text{and}\qquad
	\depth(S/\NI_{\bf k}(G))\leq r-1.
	$$
	\end{Corollary}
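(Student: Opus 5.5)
The plan is to apply Proposition~\ref{m-vertex} directly. Since ${\bf k}$ is the degree-vector, the vector ${\bf m}$ there is $m_i=k_i-1=\deg(i)-1$. We have $n=2r$, so it suffices to exhibit an ${\bf m}$-vertex cover $C$ of $G$ with $|C|=r-1$. Proposition~\ref{m-vertex} then gives $\pd(S/\NI_{\bf k}(G))\geq 2r-(r-1)=r+1$ and $\depth(S/\NI_{\bf k}(G))\leq r-1$.

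Label the vertices of $G=P_2\times P_r$ as $(a,j)$ with $a\in\{1,2\}$ and $j\in[r]$. Here $(a,j)$ is adjacent to $(3-a,j)$ and to $(a,j\pm1)$ whenever these exist. The four corners $(a,1)$ and $(a,r)$ have degree $2$, so $m=1$ there. Every other vertex has degree $3$, so $m=2$ there. I would take the set of all vertices in even columns,
$$C=\{(1,j),(2,j):\ j \text{ even},\ 2\leq j\leq r-1\}.$$
Since $r$ is odd, $|C|=2\cdot\frac{r-1}{2}=r-1$.

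It then remains to verify the two defining conditions, which is a short case check.
\begin{itemize}
\item For $(a,j)\in C$: the index $j$ is even with $2\le j\le r-1$, so the vertex is not a corner and $m=2$. Its neighbors $(a,j\pm1)$ lie in odd columns, hence outside $C$. Therefore $N_G[(a,j)]\cap C=\{(a,j),(3-a,j)\}$, which has size $2\leq m$.
\item For $(a,j)\notin C$: the index $j$ is odd, so $(3-a,j)\notin C$, and the vertices of $C$ in the closed neighborhood are exactly the existing vertices among $(a,j-1)$ and $(a,j+1)$. If $1<j<r$, both exist, giving $2=m$. If $j\in\{1,r\}$, the vertex is a corner and only one of them exists, giving $1=m$.
\end{itemize}

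The only real step is finding the right $C$; the oddness of $r$ is used precisely so that both end columns are odd and hence excluded from $C$. Verification is routine once the even-column choice is made.
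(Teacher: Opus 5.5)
Your proposal is correct and is essentially the paper's proof: the paper takes the same even-column set $C=\{x_{2q},y_{2q} : 1\leq q\leq \lfloor r/2\rfloor\}$ as an ${\bf m}$-vertex cover for ${\bf m}={\bf k}-{\bf 1}$, checks the same neighborhood counts, and concludes via Proposition~\ref{m-vertex}. Your case check silently skips the degenerate case $r=1$, where the corners have degree $1$ and $C=\emptyset$; there $|N_G[z]\cap C|=0=m_z$ trivially, so the conclusion still holds.
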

	
	\begin{proof}
	We may write $V(G)=\{x_1,\ldots,x_r,y_1,\ldots,y_r\}$ and
	 $$E(G)=\{\{x_i,x_{i+1}\}\  : 1\leq i<r\}\cup \{\{y_i,y_{i+1}\}\  : 1\leq i<r\}\cup \{\{x_i,y_i\}\  : 1\leq i\leq r\}.$$
		Consider the subset $$C=\{x_{2q},y_{2q}\ : \  1\leq q\leq \lfloor\frac{r}{2}\rfloor\}.$$ For each vertex $z\in V(G)$, let $m_z=\deg(z)-1$, and consider the vector ${\bf m}\in \mathbb{N}^{2r}$ defined by $m_z$'s. In other words, ${\bf m}={\bf k}-{\bf 1}$, where ${\bf 1}=(1,\ldots,1)\in \mathbb{N}^{2r}$. Then $C$ is an ${\bf m}$-vertex cover of $G$. Indeed, for any $z\in C$ we have $	|N_G[z]\cap C|=2=m_z$, and for any  $z\notin C$ we have either $|N_G[z]\cap C|=m_z=2$ or $|N_G[z]\cap C|=m_z=1$. Hence, $G$ has  an ${\bf m}$-vertex cover of cardinality $r-1$.
		 Therefore, the desired inequalities follow immediately from Proposition~\ref{m-vertex} and the Auslander-Buchsbaum formula.
	\end{proof}

		\begin{Corollary}\label{product2}
	Let  $G=P_2\boxtimes P_r$ be the strong product of the path graphs $P_2$ and $P_r$, and let ${\bf k}$ be the degree-vector of $G$. Then
	 $$\pd(S/\NI_{\bf k}(G))\geq  \begin{cases}
			r, & \text{if } r\ \text{is even}, \\
				r-1, & \text{if } r\ \text{is odd}.
			\end{cases}$$ and $$\depth(S/\NI_{\bf k}(G))\leq  \begin{cases}
				r, & \text{if } r\ \text{is even}, \\
				r+1, & \text{if } r\ \text{is odd}.
			\end{cases}$$
	\end{Corollary}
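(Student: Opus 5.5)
The plan is to follow the proof of Corollary~\ref{product1}. We exhibit an explicit $\mathbf{m}$-vertex cover $C$ of $G$ with $\mathbf{m}=\mathbf{k}-\mathbf{1}$, and then apply Proposition~\ref{m-vertex}. This gives $\pd(S/\NI_{\bf k}(G))\geq 2r-|C|$ and $\depth(S/\NI_{\bf k}(G))\leq |C|$.

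Write $V(G)=\{x_1,\ldots,x_r,y_1,\ldots,y_r\}$, where the $i$-th column is $\{x_i,y_i\}$. In the strong product $P_2\boxtimes P_r$ the closed neighborhood of $x_i$ (and of $y_i$) is the union of columns $i-1,i,i+1$ that exist. Hence $\deg(z)=3$ for $z$ in columns $1$ and $r$, and $\deg(z)=5$ otherwise, so $m_z=2$ at the end columns and $m_z=4$ at interior columns. The key simplification is that $|N_G[z]\cap C|$ depends only on the column of $z$. Setting $c_j=|C\cap\{x_j,y_j\}|$, the conditions reduce to the following: for a vertex $z$ in column $i$, the sum $c_{i-1}+c_i+c_{i+1}$ (with missing indices omitted) must equal $m_z$ when $z\notin C$ and be at most $m_z$ when $z\in C$.

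We take $C$ to be the union of the full even columns, that is, $C=\{x_{2q},y_{2q}: 1\leq q\leq \lfloor r/2\rfloor\}$, so $c_j=2$ for even $j$ and $c_j=0$ for odd $j$. We then check the conditions column by column.
\begin{itemize}
\item An odd interior column is not in $C$, and its window sum is $c_{i-1}+c_{i+1}=4=m_z$.
\item Column $1$ is not in $C$, and its window sum is $c_2=2=m_z$.
\item An even column is in $C$, and its window sum is $2\leq m_z$.
\item If $r$ is odd, column $r$ is not in $C$, and its sum is $c_{r-1}=2=m_z$. If $r$ is even, column $r$ is in $C$, and its sum is $c_{r-1}+c_r=2\leq 2$.
\end{itemize}
So $C$ is an $\mathbf{m}$-vertex cover, with $|C|=r$ if $r$ is even and $|C|=r-1$ if $r$ is odd.

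Proposition~\ref{m-vertex} then gives $\pd(S/\NI_{\bf k}(G))\geq r$ and $\depth(S/\NI_{\bf k}(G))\leq r$ for even $r$. For odd $r$ it gives $\pd(S/\NI_{\bf k}(G))\geq r+1$ and $\depth(S/\NI_{\bf k}(G))\leq r-1$, which already imply the stated weaker bounds $r-1$ and $r+1$.

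The only delicate point is the bookkeeping at the two end columns, where the degree drops to $3$. This is where a naive choice, such as one vertex per column, fails. I would double-check the small case $r=3$ directly: there $C=\{x_2,y_2\}$, and every $3$-subset of $N_G[x_1]$ meets column $1$. If for some reason the sharper odd bound were wrong, the alternative is to add one end column to $C$, giving $|C|=r+1$, which would match the stated bounds exactly.
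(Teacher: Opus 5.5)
Your proof is correct and uses the same route as the paper: an explicit $\mathbf{m}$-vertex cover with $\mathbf{m}=\mathbf{k}-\mathbf{1}$, fed into Proposition~\ref{m-vertex}. The only difference is the witness. The paper takes $C$ to be the odd columns, of size $2\lceil r/2\rceil$, while your even columns have size $2\lfloor r/2\rfloor$.

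For even $r$ the two choices give the same bounds. For odd $r$ your set is still a valid $\mathbf{m}$-vertex cover, so you get the strictly sharper bounds $\pd(S/\NI_{\bf k}(G))\geq r+1$ and $\depth(S/\NI_{\bf k}(G))\leq r-1$, and your hedge is unnecessary. The fallback you suggest, adjoining an end column $j$ to the even columns, would in fact fail: you would get $|N_G[x_j]\cap C|=4>2=m_{x_j}$.
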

	\begin{proof}
	Let the vertex set of  $G$ be $\{x_1,\ldots,x_r,y_1,\ldots,y_r\}$ and its edge set be  $$
\bigl\{\{x_i,x_{i+1}\},\{x_i,y_{i+1}\},\{y_i,y_{i+1}\},\{y_i,x_{i+1}\}\;\big|\;1\leq i<r\bigr\}
\;\cup\;
\bigl\{\{x_i,y_i\}\;\big|\;1\leq i\leq r\bigr\}.$$ Define $C=\{x_{2q-1},y_{2q-1}\ : \  1\leq q\leq \lceil\frac{r}{2}\rceil\}$, and consider the vector ${\bf m}={\bf k}-{\bf 1}$, where ${\bf 1}=(1,\ldots,1)\in \mathbb{N}^{2r}$.
For any $z\notin C$  we have either $|N_G[z]\cap C|=4=m_z$, or 
 $|N_G[z]\cap C|=2=m_z$. Moreover, for any  $z\in C$ we have either $|N_G[z]\cap C|=2<m_z=4$ or $|N_G[z]\cap C|=2=m_z$.
 Thus $C$ is an ${\bf m}$-vertex cover of $G$ of cardinality $r$ if $r$ is even and of cardinality $r+1$ if $r$ is odd. 
 So by Proposition \ref{m-vertex} the conclusion follows.
	\end{proof}

	\section{$\bf k$-neighborhood ideal as an edge ideal}\label{sec2}
	
	In this section, we study $\bf k$-neighborhood ideals in the cases that the ideal appears as an edge ideal of some graph. First we will present some vectors $\bf k$ for which $\NI_{\bf k}(G)$ is an edge ideal.

Throughout this section $G$ is a graph  without isolated vertices on the vertex set $[n]$. We denote the vector ${\bf k}\in \mathbb{N}^n$  with $k_i=2$ for all $1\leq i\leq n$ by $\textbf{2}$.  It follows from the definition of the $\bf k$-neighborhood ideal that for  ${\bf k}={\bf 2}\in \mathbb{N}^n$, 
	\begin{equation}\label{graphCase}
		\NI_{\bf k}(G)=\sum_{i=1}^n\, (\textbf{x}_W:\, W\subseteq N_G[i]),\, |W|=2)=I(\widetilde{G}),
	\end{equation}
where $\widetilde{G}$ is a graph with the vertex set $V(G)$ and the edge set 
	$$E(\widetilde{G})=E(G)\cup\{\{i,j\}: i,j\in N_G(p) \textrm{ for some } p\in V(G)\}.$$ 
	Notice that $\widetilde{G}$ is indeed the second power of $G$. 
	
	
	
	The following two propositions give other instances, where $\NI_{\bf k}(G)$ appears as the edge ideal of a graph.	
	
		\begin{Proposition}\label{dominatingset}
		Let $G$ be a graph  without isolated vertices on $[n]$, and let $D\subseteq [n]$ be a dominating set of $G$. Let ${\bf k}\in \mathbb{N}^n$ be the vector with
		\begin{equation}\label{eq:depthP_n}
			k_i\ =\ \begin{cases}
				2,&\textup{if}\ i\in D,\\
				\hfil \deg(i)+1,&\textup{if}\ i\notin D.
			\end{cases}
		\end{equation}
		Then $\NI_{\bf k}(G)$ is the edge ideal of a graph.
	\end{Proposition}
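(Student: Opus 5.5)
The goal is to show that every minimal monomial generator of $\NI_{\bf k}(G)$ has degree exactly $2$. Since the ideal is squarefree, it will then be generated by squarefree quadratic monomials $x_ix_j$, i.e.\ it is the edge ideal of the graph $H$ on $[n]$ whose edges are the pairs $\{i,j\}$ with $x_ix_j\in\NI_{\bf k}(G)$.

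We first split the generators by their origin. For $i\in D$, with $k_i=2$, the generators are the quadrics $x_px_q$ with $\{p,q\}\subseteq N_G[i]$. Because $G$ has no isolated vertices, $|N_G[i]|\geq 2$, so this set is nonempty. For $i\notin D$, with $k_i=\deg(i)+1$, the only generator is $u_{i,G}=\mathbf{x}_{N_G[i]}$. Hence
$$\NI_{\bf k}(G)=\big(x_px_q:\ \{p,q\}\subseteq N_G[i],\ i\in D\big)+\big(u_{i,G}:\ i\notin D\big).$$

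The central step is to show that each $u_{i,G}$ with $i\notin D$ is redundant. Since $D$ is a dominating set and $i\notin D$, there is some $j\in D$ with $\{i,j\}\in E(G)$. Then $i,j\in N_G[j]$ and $k_j=2$, so $x_ix_j\in\NI_{\bf k}(G)$. Also $j\in N_G(i)$, so $x_ix_j$ divides $u_{i,G}$. Therefore $u_{i,G}$ lies in the ideal generated by the quadrics.

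It follows that $\NI_{\bf k}(G)=I(H)$, where $E(H)=\{\{p,q\}: \{p,q\}\subseteq N_G[i] \text{ for some } i\in D\}$. One could also add the remark that $H$ is the subgraph of $\widetilde{G}$ formed by $E(G[\,\cdot\,])$-pairs inside closed neighborhoods of vertices of $D$. There is no real obstacle here. The only points to check are that degree-$1$ generators cannot occur (since $k_i\geq 2$ everywhere, because $\deg(i)\geq1$) and that the domination hypothesis is exactly what supplies a neighbor $j\in D$ for each $i\notin D$.
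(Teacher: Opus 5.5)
Your proof is correct and follows the paper's argument. Like the paper, you use domination to find, for each $i\notin D$, a neighbor $j\in D$ with $x_ix_j\in\NI_{\bf k}(G)$ dividing $\mathbf{x}_{N_G[i]}$, so the generating set reduces to the quadrics $x_rx_s$ with $\{r,s\}\subseteq N_G[j]$ for some $j\in D$; your explicit check that every $k_i\geq 2$ (so there are no linear generators) is a detail the paper leaves implicit.
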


	\begin{proof}
Consider a monomial $\textbf{x}_{N_G[i]}\in \NI_{\bf k}(G)$, where $i\in [n]\setminus D$.
Since $D$ is a dominating set of $G$, there exists $j\in D\cap N_G(i)$. This shows that $\textbf{x}_{N_G[i]}$ is a multiple of $x_ix_j$.  Moreover, $i,j\in N_G[j]$ with $j\in D$. Since $k_j=2$, the monomial $x_ix_j$ is a minimal generator of  $\NI_{\bf k}(G)$. Therefore,
		$$  \NI_{\bf k}(G)=(x_rx_s: \, \{r,s\}\subseteq N_G[j] \textrm{ for some } j\in D) $$
		is a quadratic squarefree monomial ideal.
	\end{proof}

	\begin{Proposition}\label{edgeIdeal2}
		Let $G$ be a graph without isolated vertices on $[n]$  and $A\subseteq [n]$ be a maximal  independent set of $G$. Let ${\bf k}\in \mathbb{N}^n$ be the vector with
		\begin{equation}\label{eq:depthP_n}
			k_i\ =\ \begin{cases}
				2,&\textup{if}\ i\in A,\\
				\hfil \deg(i)+1,&\textup{if}\ i\notin A.
			\end{cases}
		\end{equation}
		The following statements hold.
		\begin{enumerate}
			\item [(a)] $\NI_{\bf k}(G)$ is the edge ideal of a graph $G^*$.
			\item [(b)] $\height(\NI_{\bf k}(G))=n-|A|$. In particular, $P=(x_i:\, i\notin A)$ is a minimal prime ideal of $\NI_{\bf k}(G)$ of minimum height. 
			\item [(c)]  $\pd(S/\NI_{\bf k}(G))\geq n-|A|$.
			\item [(d)] $\reg(S/\NI_{\bf k}(G))\leq a(G^*)\leq |A|$. 
		\end{enumerate}
		
	\end{Proposition}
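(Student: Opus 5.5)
By Lemma~\ref{primaryDecom} applied to this $\mathbf{k}$, we get $m_i=\deg(i)$ for $i\in A$ and $m_i=1$ for $i\notin A$. I would prove the four parts in order, reusing Proposition~\ref{dominatingset}, Lemma~\ref{primaryDecom} and standard facts about edge ideals.

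For (a), a maximal independent set is a dominating set. So Proposition~\ref{dominatingset} applies with $D=A$. It gives $\NI_{\bf k}(G)=I(G^*)$, where $G^*$ has as edges all pairs $\{r,s\}\subseteq N_G[a]$ with $a\in A$; in particular $\NI_{\bf k}(G)=\sum_{a\in A} I(K_{N_G[a]})$.

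For (b), set $C=[n]\setminus A$.
\begin{itemize}
\item $C$ is an $\mathbf{m}$-multiple dominating set. Since $A$ is independent, $N_G[a]\cap C=N_G(a)$ has size $\deg(a)=m_a$ for each $a\in A$. For $i\notin A$ we have $i\in C$, so $|N_G[i]\cap C|\ge 1$.
\item $C$ is minimal. If $v\in C$ is removed, then $v$ has a neighbour $a\in A$ because $A$ is dominating, and $|N_G[a]\cap(C\setminus\{v\})|=\deg(a)-1<m_a$.
\end{itemize}
So $P=(x_i:i\notin A)$ is a minimal prime of height $n-|A|$.

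For the lower bound, let $D$ be any $\mathbf{m}$-multiple dominating set and $U=[n]\setminus D$. The condition at $a\in A$ says $|N_G[a]\cap U|\le 1$. Define a map $U\to A$ sending $u\in A$ to itself and $u\notin A$ to some neighbour of $u$ in $A$. Two elements with the same image $a$ would both lie in $N_G[a]\cap U$, so the map is injective. Hence $|U|\le|A|$, i.e.\ $|D|\ge n-|A|$, and Corollary~\ref{height} gives $\height(\NI_{\bf k}(G))=n-|A|$.

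Part (c) is then immediate, since $\pd(S/I)\ge\height(I)$ for any ideal $I$. Alternatively, $C=[n]\setminus A$ yields a generator $\mathbf{x}_C$ of degree $n-|A|$ in the Alexander dual, and \cite[Theorem 2.1]{T} gives the same bound as in the proof of Proposition~\ref{m-vertex}.

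For (d), the bound $\reg(S/I(G^*))\le a(G^*)$ is the standard matching-number bound for edge ideals. For the bound by $|A|$, I would write $\NI_{\bf k}(G)=\sum_{a\in A}I(K_{N_G[a]})$. Each summand is a squarefree Veronese ideal of degree $2$, so $\reg(S/I(K_{N_G[a]}))=1$, and \cite[Corollary 3.2]{Her} (subadditivity of regularity under sums) gives $\reg\le|A|$.

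The main obstacle is the middle inequality $a(G^*)\le|A|$, and I expect it to fail in general. For $G=K_{1,3}$ with $A$ the centre, $G^*=K_4$ has $a(G^*)=2>|A|=1$. So I would prove the two bounds $\reg(S/\NI_{\bf k}(G))\le a(G^*)$ and $\reg(S/\NI_{\bf k}(G))\le|A|$ separately, and regard the chained form as needing correction (for example, to $\reg\le\min\{a(G^*),|A|\}$).
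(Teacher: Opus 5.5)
Your proposal is correct, and you are right that the middle inequality in (d) is false. For $G=K_{1,3}$ with $A$ the centre, $G^*=K_4$, so $a(G^*)=2>1=|A|$, while $\reg(S/I(K_4))=1$.

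The paper's own proof of (d) has the gap you anticipated. It argues that an induced matching of $G^*$ contains at most one edge from each clique on $N_G[j_r]$. That only shows $\im(G^*)\le|A|$; it says nothing about the matching number $a(G^*)$. Since $\im(G^*)$ is a lower bound for $\reg(S/I(G^*))$, the paper's chain never actually establishes $\reg(S/\NI_{\bf k}(G))\le|A|$.

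Your argument supplies this. You write $\NI_{\bf k}(G)=\sum_{a\in A}I(K_{N_G[a]})$, where each summand has $\reg=1$ because $|N_G[a]|\ge 2$. You then apply subadditivity \cite[Corollary 3.2]{Her}, the same tool the paper uses in Theorem~\ref{caterpillar}. Together with \cite[Theorem 6.7]{HV}, this gives the correct statement $\reg(S/\NI_{\bf k}(G))\le\min\{a(G^*),|A|\}$. One can also keep the paper's observation in the form $\im(G^*)\le\reg(S/\NI_{\bf k}(G))\le|A|$.

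For (b) your route differs from the paper's, and both are valid. The paper passes to $\height(\NI_{\bf k}(G))=\tau(G^*)$. It then takes a minimum vertex cover of $G^*$ and repeatedly swaps each vertex of $A$ in it for the unique uncovered vertex of its clique $N_{G^*}[i]$, until the cover lies in $[n]\setminus A$.

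You stay in $G$ and apply Lemma~\ref{primaryDecom} with $m_a=\deg(a)$ for $a\in A$ and $m_i=1$ otherwise. You check directly that $[n]\setminus A$ is a minimal $\mathbf m$-multiple dominating set. You then bound the complement of an arbitrary $\mathbf m$-multiple dominating set by an injection into $A$.

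Both arguments rest on the same fact: each $N_G[a]$ with $a\in A$ can miss at most one vertex. Your counting version is shorter, avoids the iteration, and needs no separate check that a minimum cover never contains all of $N_{G^*}[i]$.
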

	\begin{proof}
	(a)	Since any maximal independent set of $G$ is a dominating set of $G$, the assertion follows from Proposition \ref{dominatingset}.
		\smallskip
		
	(b)	As was shown in the proof of Proposition \ref{dominatingset}, $  \NI_{\bf k}(G)=I(G^*),$
		where $$E(G^*)=\{ \{r,s\}:\, \{r,s\}\subseteq N_G[j] \textrm{ for some }  j\in A\}.$$  
		Thus $\height(\NI_{\bf k}(G))=\tau(G^*)$.
		It follows from the structure of $G^*$ that $A$ is a maximal independent set of $G^*$, too. Thus the set $[n]\setminus A$ is a minimal vertex cover of $G^*$.
		We claim that it is indeed a vertex cover of  $G^*$ of minimum cardinality, that is $\tau(G^*)=n-|A|$. Choose a vertex cover $S$ of  $G^*$ with $|S|=\tau(G^*)$. For any $i\in A$  the induced subgraph of $G^*$ on the vertex set ${N_{G^*}[i]}$ is a complete graph. Consequently, $|S\cap N_{G^*}[i]|=\deg_{G^*}(i)$.  Thus, for any $i\in S\cap A$ there exists $j\in  N_{G^*}(i)\setminus S$. With such choice of $j$ the set $(S\setminus\{i\})\cup\{j\}$ is still  a vertex cover of $G^*$ of cardinality $\tau(G^*)$. 
		Repeating this replacement procedure for every vertex in $S\cap A$, we obtain a vertex cover of $G^*$ of cardinality $\tau(G^*)$ contained entirely in $[n]\setminus A$. Since $[n]\setminus A$ is itself a minimal vertex cover of $G^*$, it follows that $\tau(G^*)=n-|A|$. 
		\smallskip
	
	(c) follows from (b) and the fact that height of an ideal $I\subset S$ is a lower bound for the projective dimension of $S/I$. 
	\smallskip	
		
	(d)	Let $A = \{j_1,\ldots,j_\ell\}$, and let
		$G_1^*,\ldots,G_\ell^*$ be complete graphs with vertex sets
		$N_G[j_1],\ldots,N_G[j_\ell]$, respectively.
		Then $
		E(G^*) = \bigcup_{r=1}^\ell E(G_r^*).
		$
		Since any induced matching of $G^*$ contains at most one edge from each
		$E(G_r^*)$, it follows that
		$
		a(G^*) \le \ell = |A|. 
		$	Now, by \cite[Theorem 6.7]{HV} we get $$\reg(S/\NI_{\bf k}(G))=\reg(S/I(G^*))\leq a(G^*)\leq |A|.$$
	\end{proof}

In the sequel, we concentrate on $\NI_{\bf k}(G)$ for the vector ${\bf k}=\textbf{2}\in \mathbb{N}^n$ that is $k_i=2$ for all $i$. By (\ref{graphCase}), we have   
$\NI_{\bf k}(G)=I(\widetilde{G})$.
Hence, in order to study $\NI_{\bf k}(G)$, we need to explore how the properties of $G$ and $\widetilde{G}$ are related. 

For two edges $e,e'\in E(G)$, we define the {\em distance} of $e$ and $e'$ in $G$ as the minimum integer $r$ such that the sequence $e,e_1,\ldots,e_r,e'$ forms the edges of a path in $G$ connecting $e$ and $e'$. We denote this number by $\dist_G(e,e')$. A {\em distance-three matching} of $G$ is a set $M$ of edges of $G$ such that for any two distinct edges $e,e'\in M$, we have $\dist_G(e,e')\geq 3$.  We set $a_3(G)$ to be the maximum size of a distance-three matching of $G$. 
We say that a graph $G$ is {\em claw-free}, if it has no induced subgraph which is isomorphic to the complete bipartite graph $K_{1,3}$.
We start with the following proposition which relates the invariants $\im(\widetilde{G})$ and $a_3(G)$.

	\begin{Proposition}\label{claw-free}
		For any graph $G$, we have $\im(\widetilde{G})\geq a_3(G)$.	
		In particular, if $G$ is claw-free, then $\im(\widetilde{G})=a_3(G)$.
	\end{Proposition}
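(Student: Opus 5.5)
The plan is to prove the two inequalities separately. For $\im(\widetilde{G})\ge a_3(G)$ I will show that every distance-three matching of $G$ is already an induced matching of $\widetilde{G}$. For the reverse inequality under claw-freeness I will turn a maximum induced matching of $\widetilde{G}$ into a distance-three matching of $G$ of the same size, by replacing each edge of $\widetilde{G}$ that is not an edge of $G$ with a suitable edge of $G$.

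Throughout I use one reformulation of the definitions. $\{u,v\}\in E(\widetilde{G})$ if and only if $1\le d_G(u,v)\le 2$, where $d_G$ is the usual vertex distance. Also, $\dist_G(e,e')\ge 3$ holds exactly when $e\cap e'=\emptyset$ and $d_G(x,y)\ge 3$ for all $x\in e$, $y\in e'$. Indeed, a path of length $t\ge 1$ from an endpoint of $e$ to an endpoint of $e'$, together with $e$ and $e'$, gives a sequence $e,e_1,\ldots,e_t,e'$ (up to trimming when the path reuses an endpoint). Conversely, such a sequence yields a path of length $t$ between endpoints.

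First inequality. Let $M$ be a distance-three matching of $G$. Its edges lie in $E(G)\subseteq E(\widetilde{G})$ and are pairwise disjoint. Suppose two of them, $e$ and $e'$, are joined by an edge $\{x,y\}$ of $\widetilde{G}$ with $x\in e$ and $y\in e'$. Then $d_G(x,y)\le 2$, which contradicts $\dist_G(e,e')\ge 3$. So $M$ is an induced matching of $\widetilde{G}$, and hence $\im(\widetilde{G})\ge a_3(G)$.

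Second inequality, assuming $G$ is claw-free. Let $M$ be an induced matching of $\widetilde{G}$ with $|M|=\im(\widetilde{G})$. Because $M$ is induced in $\widetilde{G}$, any two of its edges $e,e'$ satisfy $d_G(x,y)\ge 3$ for all $x\in e$, $y\in e'$. For each $e=\{a,b\}\in M$ choose $f_e\in E(G)$ as follows. If $e\in E(G)$, put $f_e=e$. Otherwise $a$ and $b$ are nonadjacent and have a common neighbour $p_e$, and we put $f_e=\{a,p_e\}$.

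The key observation uses claw-freeness: $N_G[p_e]\subseteq N_G[a]\cup N_G[b]$. To see this, take $w\in N_G(p_e)$ adjacent to neither $a$ nor $b$ (and distinct from them). Then $p_e$ together with $a,b,w$ induces a $K_{1,3}$. Consequently every vertex of $f_e$, and every neighbour of such a vertex, lies in $N_G[e]:=N_G[a]\cup N_G[b]$. This is trivially true also when $f_e=e$.

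It remains to check $\dist_G(f_e,f_{e'})\ge 3$ for distinct $e,e'\in M$, and this is the step needing care. Suppose some $x\in f_e$ and $y\in f_{e'}$ have $d_G(x,y)\le 2$.
\begin{itemize}
\item If $x=y$ or $x$ is adjacent to $y$: then $x\in N_G[e]$ and $x\in N_G[e']$. So some endpoint of $e$ and some endpoint of $e'$ are at distance at most $2$, a contradiction.
\item If $x-w-y$ is a path: then $w\in N_G[x]\subseteq N_G[e]$ and $w\in N_G[y]\subseteq N_G[e']$, giving the same contradiction.
\end{itemize}
Hence $\{f_e: e\in M\}$ is a distance-three matching of size $|M|$. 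Therefore $a_3(G)\ge\im(\widetilde{G})$, and equality follows.
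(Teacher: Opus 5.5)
Your proof is correct and follows essentially the same route as the paper: the first inequality is argued identically, and in the claw-free case you, like the paper, replace each edge $\{a,b\}\in M\setminus E(G)$ by $\{a,p\}$ for a common neighbour $p$, using a claw centred at $p$ to rule out any interference between edges. The difference is only organisational. Your containment $N_G[p_e]\subseteq N_G[a]\cup N_G[b]$ lets you make all replacements at once and check the distance-three condition directly. The paper instead swaps edges one at a time while preserving inducedness in $\widetilde{G}$, and then separately shows that an induced matching of $\widetilde{G}$ consisting of edges of $G$ is a distance-three matching.
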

	
	\begin{proof}
		In order to prove the desired inequality it is enough to show that any two edges $e$ and $e'$ of $G$ which form a distance-three matching of $G$,  form an induced matching in $\widetilde{G}$. Consider such edges $e=\{u,v\}$ and $e'=\{w,z\}$ of $G$. Since $\dist_G(e,e')\geq 3$, clearly $e\cap e'=\emptyset$. Suppose by contradiction that an endpoint of $e$, say $u$ is adjacent to an endpoint of $e'$, say $w$ in the graph $\widetilde{G}$. Then $\{u,w\}\in E(\widetilde{G})\setminus E(G)$. Hence there exists a vertex $x\in V(G)$ such that $u,w\in N_G(x)$.  Note that since $\dist_G(e,e')\geq 3$, we have $x\neq v$ and $x\neq z$. Then $e,\{u,x\},\{x,w\},e'$ is a path connecting $e$ and $e'$, which contradicts to $\dist_G(e,e')\geq 3$.

		Now, assume that $G$ is claw-free. We show that $\im(\widetilde{G})=a_3(G)$. By the first statement it is enough to show that $a_3(G)\geq \im(\widetilde{G})$. Let $\im(\widetilde{G})=r$ and $M=\{e_1,\ldots,e_r\}$ be a maximal induced matching of $\widetilde{G}$. We claim that there exists an induced matching $M'$ of $\widetilde{G}$ of cardinality $r$ such that $M'\subseteq E(G)$. If $M\subseteq E(G)$, then $M'=M$ is the desired induced matching. Otherwise, there exists an edge $e_i\in M\setminus E(G)$. Let $e_i=\{u,v\}$. Since $e_i\in E(\widetilde{G})\setminus E(G)$, there exists $x\in V(G)$ such that $u,v\in N_G(x)$. Then $e'=\{u,x\}\in E(G)$. We show that $(M\setminus \{e_i\})\cup \{e'\}$ is an induced matching of $\widetilde{G}$, as well. To this aim we need to show that for any integer $1\leq j\leq r$ with $j\neq i$, the edges $e_j$ and $e'$ form an induced matching of $\widetilde{G}$. Let $e_j=\{w,z\}$.  Since $e_i$ and $e_j$ form an induced matching of $\widetilde{G}$, we have $x\notin e_j$. Thus $e'\cap e_j=\emptyset$. By contradiction assume that $e'$ and $e_j$ do not form an induced matching in $\widetilde{G}$. Then there exists an edge $e''\in E(\widetilde{G})\setminus \{e',e_j\}$ with
		$e''\subset e'\cup e_j$. Since $\{u,w\},\{u,z\}\notin E(\widetilde{G})$, it follows that either $e''=\{x,w\}$ or $e''=\{x,z\}$. Without loss of generality assume that $e''=\{x,w\}$. If $e''\in E(G)$, then $w,u\in N_G(x)$, implying $\{u,w\}\in E(\widetilde{G})$ which is a contradiction. Hence, $e''\in E(\widetilde{G})\setminus E(G)$. This means that $x,w\in N_G(y)$ for some $y\in V(G)$. Clearly, $y\neq u$ and $y\neq v$, otherwise $\{u,w\}\in E(G)$ or $\{v,w\}\in E(G)$, which is impossible. Then the induced subgraph $H$ of $G$ on the vertex set $\{u,v,x,y\}$ is the complete bipartite graph $K_{1,3}$ with the edge set $E(H)=\{\{x,u\},\{x,v\},\{x,y\}\}$. To see this note that by our assumption $\{u,v\}\notin E(G)$. If $\{u,y\}\in E(G)$, then $u,w\in N_G(y)$ and hence $\{u,w\}\in E(\widetilde{G})$. This contradicts to the fact that $e_i$ and $e_j$ form an induced matching of $\widetilde{G}$. Hence, $\{u,y\}\notin E(G)$. By a similar argument $\{v,y\}\notin E(G)$.  Thus $H=K_{1,3}$, which is a contradiction. This proves our claim that $(M\setminus \{e_i\})\cup \{e'\}$ is an induced matching of $\widetilde{G}$. If $(M\setminus \{e_i\})\cup \{e'\}\subseteq E(G)$, then we set $M'=(M\setminus \{e_i\})\cup \{e'\}$. Otherwise, repeating the same argument as above and after finitely many steps of replacing the edges in $M\setminus E(G)$ with appropriate edges in $E(G)$ one obtains the desired induced matching $M'$ of $\widetilde{G}$ with $M'\subseteq E(G)$ and $|M'|=\im(\widetilde{G})$. 
		
		After relabelling the edges of $\widetilde{G}$, we may again assume that $M'=\{e_1,\ldots,e_r\}$. We show that $M'$ forms a distance-three matching of $G$. Consider two distinct indices $i,j\in [r]$. Since $e_i$ and $e_j$ form an induced matching in $\widetilde{G}$, they form an induced matching in $G$. So $\dist_G(e_i,e_j)\geq 2$.   If $\dist_G(e_i,e_j)=2$, then there exist edges $e',e''\in E(G)$ such that $e_i,e',e'',e_j$ is a path in $G$. Let this path be $e_i=\{u,v\},\{v,a\},\{a,b\},\{b,c\}=e_j$. Then $v,b\in N_G(a)$, and hence $\{v,b\}\in E(\widetilde{G})$. This contradicts to the assumption that $e_i$ and $e_j$ form an induced matching in $\widetilde{G}$. Therefore, $\dist_G(e_i,e_j)\geq 3$. This argument shows that the edges in $M'$ are pairwise of distance $d\geq 3$. Thus $M'$ is a distance-three matching of $G$, and hence $a_3(G)\geq |M'|=\im(\widetilde{G})$.                
	\end{proof}
	
	\begin{Remark}
		The assumption of being claw-free on $G$ can not be dropped in Proposition~\ref{claw-free}. Consider the graph $G$ with the vertex set $[6]$ and the edge set $E(G)=\{\{1,2\},\{2,3\},\{3,4\},\{4,5\},\{4,6\}\}$. Then $a_3(G)=1$ and $\im(\widetilde{G})=2$, as the edges $\{1,2\},\{5,6\}$ form a maximal induced matching of $\widetilde{G}$. 
	\end{Remark}
	
In Propositions \ref{block},  \ref{small cycle} and \ref{outerplane} we present families of graphs $G$ for which  $\widetilde{G}$ is a chordal graph.   We first note that, in general, the chordality of $G$ does not guarantee the chordality of $\widetilde{G}$.

\begin{Example}\label{chordalnotEnough}
	Let $G$ be the graph depicted in Figure~\ref{polygon}. Then $G$ is chordal, whereas $\widetilde{G}$ is not, since $2,4,6,8,2$ is an induced cycle in $\widetilde{G}$ of length $4$.
	
	\begin{figure}[ht]
	\centering
	\begin{tikzpicture}[
		scale=1.1,
		vertex/.style={circle,fill=black,inner sep=1.5pt}
		]
		
		\node[vertex,label=above:$1$] (v1) at (90:1) {};
		\node[vertex,label=above right:$2$] (v2) at (45:1) {};
		\node[vertex,label=right:$3$] (v3) at (0:1) {};
		\node[vertex,label=below right:$4$] (v4) at (-45:1) {};
		\node[vertex,label=below:$5$] (v5) at (-90:1) {};
		\node[vertex,label=below left:$6$] (v6) at (-135:1) {};
		\node[vertex,label=left:$7$] (v7) at (180:1) {};
		\node[vertex,label=above left:$8$] (v8) at (135:1) {};
		
		\draw  (v1)--(v2)--(v3)--(v4)--(v5)--(v6)--(v7)--(v8)--(v1);
		
		\draw  (v1)--(v3);
		\draw  (v3)--(v5);
		\draw  (v5)--(v7);
		\draw  (v1)--(v7);
		\draw  (v3)--(v7);
	\end{tikzpicture}
	\caption{A chordal graph.}
	\label{polygon}
\end{figure}
\end{Example}	
	
	\begin{Proposition}\label{block}
		If $G$ is a block graph, then $\widetilde{G}$ is a chordal graph.
	\end{Proposition}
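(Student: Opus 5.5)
The plan is to build a perfect elimination ordering of $\widetilde{G}$ by induction on $|V(G)|$. We use the standard fact that a graph is chordal if and only if it has a simplicial vertex $v$ such that the graph obtained by deleting $v$ is chordal.

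We may assume $G$ is connected, since $\widetilde{G}$ is the disjoint union of the squares of the components of $G$. If $G$ is a single block, i.e.\ a complete graph, then $\widetilde{G}=G$ is complete and hence chordal. This is also the base case of the induction.

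Suppose $G$ has at least two blocks. Choose a leaf block $B$ of the block--cut tree of $G$. Then $B$ is a clique containing exactly one cut vertex $c$. Pick any $v\in B\setminus\{c\}$. All neighbours of $v$ in $G$ lie in $B$, so $N_G(v)=B\setminus\{v\}$.

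\emph{Step 1: $N_{\widetilde{G}}[v]=N_G[c]$.} A vertex $w\neq v$ is adjacent to $v$ in $\widetilde{G}$ exactly when $\dist_G(v,w)\le 2$. Every such $w$ lies in $B$ or is adjacent to $c$, since every path leaving $B$ passes through $c$. Conversely, every vertex of $N_G[c]$ is within distance $2$ of $v$ via $c$.

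\emph{Step 2: $v$ is simplicial in $\widetilde{G}$.} Any two vertices of $N_G[c]$ are at distance at most $2$ in $G$ through $c$. Hence $N_G[c]$ is a clique in $\widetilde{G}$, and by Step 1 so is $N_{\widetilde{G}}[v]$.

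\emph{Step 3: $\widetilde{G}\setminus v=\widetilde{G\setminus v}$.} The graph $G\setminus v$ is again a connected block graph without isolated vertices. If $B=\{c,v\}$, then $c$ is a cut vertex and so has other neighbours. We must check that deleting $v$ does not change which pairs of the remaining vertices are at distance at most $2$. An edge of $\widetilde{G}$ could be lost only if it arose as $u,w\in N_G(v)$ with $v$ the only common neighbour. But then $u,w\in B$, and since $B$ is a clique they are already adjacent in $G\setminus v$.

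By the induction hypothesis, $\widetilde{G\setminus v}$ is chordal. Adding back the simplicial vertex $v$ keeps the graph chordal, so $\widetilde{G}$ is chordal.

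The only delicate point is Step 1, namely that every vertex within distance $2$ of $v$ lies in $N_G[c]$. This is where we use that $B$ is a leaf block, so that $v$ reaches the rest of $G$ only through $c$, and that blocks are cliques. This is exactly what fails for general chordal graphs, as Example~\ref{chordalnotEnough} shows.
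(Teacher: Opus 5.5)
Your proof is correct and takes essentially the paper's route: induct by deleting a non-cut vertex $x$ of a leaf clique, where the paper takes a leaf of the clique complex $\Delta(G)$ instead of a leaf block of the block--cut tree, and use that $\widetilde{G}\setminus x=\widetilde{G\setminus x}$ and that the $\widetilde{G}$-neighbours of $x$ are pairwise adjacent. The paper proves the last point by case analysis on a cycle through $x$, whereas your identification $N_{\widetilde{G}}[v]=N_G[c]$ states it more cleanly as simpliciality of $v$ in $\widetilde{G}$.
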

	
	\begin{proof}
		Let $G$ be a block graph. We prove the assertion by induction on $n=|V(G)|$. If $n=1$, there is nothing to prove. Let $n>1$ and consider the clique complex $\Delta(G)$ of $G$. 
		Since $G$ is chordal, by \cite[Lemma 3.1]{HHZ}, $\Delta(G)$ has a leaf, say $F$. So there exists a maximal clique $F'$ of $G$ such that $F\cap F''\subseteq F\cap F'$ for any maximal clique $F''\neq F$ of $G$. From the assumption that $G$ is a block graph, we have $|F\cap F'|\leq 1$. 
		If $F\cap F'=\emptyset$, then $F$ is a connected component of $G$. Then we choose a vertex $x\in F$.
		The graph $H=G\setminus x$ is again a block graph and so by induction hypothesis $\widetilde{H}$ is a chordal graph. Notice that $\widetilde{G}$ is obtained from $\widetilde{H}$ by replacing the connected component $F\setminus\{x\}$ in $\widetilde{H}$ by $F$. Hence $\widetilde{G}$ is chordal too. 
		
		Now, let  $F\cap F'=\{y\}$ for some $y\in V(G)$.  	
		We choose a vertex $x\in F\setminus F'$, and set $H=G\setminus x$. By induction hypothesis $\widetilde{H}$ is a chordal graph. Note that any vertex in $F\setminus \{y\}$ is a free vertex of $G$. 
		Hence, 
		$$E(\widetilde{G})=E(\widetilde{H})\cup\{\{x,z\}: \{y,z\}\in E(G), z\neq x\}\cup\{\{x,y\}\}.$$
		
		To show that $\widetilde{G}$ is chordal, consider a cycle $C$ in $\widetilde{G}$ of length $m\geq 4$. We need to show that $C$ has a chord. If $x\notin V(C)$, then $C\subseteq V(\widetilde{H})$, and then $C$ has a chord. So suppose that $x\in V(C)$ , and write $C: u,x,v,w_1,\ldots,w_t$.
		We have $N_G(x)=F\setminus \{x\}$.  
		If $u,v\in N_G(x)$, then $\{u,v\}$ is a chord of $C$ and we are done. So we may assume that $|\{u,v\}\cap N_G(x)|\leq 1$.
		First consider the case that $|\{u,v\}\cap N_G(x)|=1$, and without loss of generality assume that 
		$\{u,v\}\cap N_G(x)=\{v\}$. Then $v\in F$ and $u\notin F$. 
		From $\{x,u\}\in E(\widetilde{G})$ and $u\notin N_G(x)$, it follows that $\{y,u\}\in E(G)$. If  $v=y$, then $\{u,v\}$ is a chord of $G$. If $v\neq y$, then $v,u\in N_G(y)$. 
		Hence, $\{u,v\}\in E(\widetilde{G})$, and it forms a chord in $C$.

		Now, assume that  $\{u,v\}\cap N_G(x)=\emptyset$. Then from $u,v\in N_{\widetilde{G}}(x)\setminus F$ we have $u,v\in N_G(y)$. Therefore, $\{u,v\}\in E(\widetilde{G})$ is a chord of $C$. 
	\end{proof}

	\begin{Proposition}\label{small cycle}
		Let  $G$  be one of the following graphs. 
		
		\begin{enumerate}
			\item [(a)] A graph with no cycle of length bigger than $5$. 
			\item [(b)] A claw-free chordal graph.
		\end{enumerate} 
		
		Then $\widetilde{G}$ is a chordal graph. 
	\end{Proposition}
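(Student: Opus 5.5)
The plan is to argue directly with an induced cycle of $\widetilde{G}$ and lift it to a cycle of $G$. Suppose $\widetilde{G}$ is not chordal, and let $C: v_1,v_2,\ldots,v_m,v_1$ be an induced cycle of $\widetilde{G}$ with $m\geq 4$. Two remarks drive everything.
\begin{itemize}
\item[(i)] Two vertices are adjacent in $\widetilde{G}$ exactly when their distance in $G$ is $1$ or $2$. Since $C$ is induced, non-consecutive $v_i,v_j$ are at distance at least $3$ in $G$.
\item[(ii)] For each edge $\{v_i,v_{i+1}\}$ of $C$ that is not an edge of $G$, we fix a common neighbour $p_i\in N_G(v_i)\cap N_G(v_{i+1})$.
\end{itemize}
Inserting these midpoints gives a closed walk $W$ in $G$ of length between $m$ and $2m$.

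The first key step, valid for every graph $G$, is that all vertices of $W$ are distinct, so $W$ is a cycle of $G$. Distinct $v_i$'s are distinct by construction, so there are two coincidences to exclude.
\begin{itemize}
\item Suppose a midpoint $p_i$ equals some $v_j$ with $j\notin\{i,i+1\}$. Then $v_j$ is a $G$-neighbour of both $v_i$ and $v_{i+1}$. Because $m\geq 4$, at least one of $v_i,v_{i+1}$ is not consecutive to $v_j$ on $C$. That gives a chord of $C$, contradicting (i).
\item Suppose two midpoints coincide, $p_i=p_j$ with $i\neq j$. Then the endpoints of both $C$-edges all lie in $N_G(p_i)$, so they are pairwise adjacent in $\widetilde{G}$. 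As $m\geq 4$, these at least three vertices include a non-consecutive pair, again a chord.
\end{itemize}
Hence $W$ is a cycle of $G$ of length at least $m\geq 4$.

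For (a), it then suffices to show $|W|\geq 6$. If $|W|\leq 5$, then any two vertices of $W$ are at distance at most $2$ along $W$, hence in $G$. In particular the non-consecutive vertices $v_1,v_3$ of $C$ would be at $G$-distance at most $2$, contradicting (i). So $G$ contains a cycle of length at least $6$, contrary to the hypothesis in (a).

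For (b), the same construction gives a cycle $W$ of $G$ of length at least $6$ with the distance constraint (i) on the $v_i$'s, and we must contradict chordality together with claw-freeness. The plan is to pass to the induced subgraph $H=G[V(W)]$. It is chordal and has $W$ as a Hamiltonian cycle. We then take a simplicial vertex $s$ of $H$ (Dirac), whose two $W$-neighbours are adjacent in $G$, and split into cases.
\begin{itemize}
\item If $s=p_i$, its $W$-neighbours are $v_i,v_{i+1}$. Their adjacency in $G$ contradicts our choice to insert midpoints only for non-edges of $G$.
\item If $s=v_i$ and its $W$-neighbours are midpoints $p_{i-1},p_i$, or a midpoint and a $v$, then adjacency of these neighbours produces a $G$-path of length at most $3$ from $v_{i-1}$ to $v_{i+1}$. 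This is not yet a contradiction.
\end{itemize}
In the second case I would invoke claw-freeness at the midpoint $p_{i-1}$. Its neighbours $v_{i-1},v_i,p_i$ must contain an edge, and $\{v_{i-1},v_i\}\notin E(G)$. Also $\{v_{i-1},p_i\}\in E(G)$ would put $v_{i-1},v_{i+1}\in N_G(p_i)$, contradicting (i). So the edge is $\{v_i,p_i\}$, which already holds, and claw-freeness gives no contradiction directly here.

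This last case analysis is the main obstacle. The local claw condition does not immediately contradict the triangle at $s$. I expect to need either an extremal choice of $C$ and of the midpoints (for example, minimizing $|W|$ or the number of chords of $W$ in $H$), or a global argument on a triangulation of $W$ in $H$ combined with claw-freeness at several midpoints simultaneously. The aim in either case is to force some $v_j,v_{j+2}$ to distance at most $2$ in $G$.
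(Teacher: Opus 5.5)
Your argument for (a) is correct and essentially follows the paper's route. You lift an induced cycle of $\widetilde{G}$ to a cycle of $G$ by inserting common neighbours, check that the vertices are distinct, and show the lift has length at least $6$. Your observation that $|W|\le 5$ would put $v_1,v_3$ at $G$-distance at most $2$ is a tidy way to get the bound.

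Part (b), however, is not proved: the proposal ends in an unresolved case analysis. It stalls because you apply claw-freeness at $p_{i-1}$ with neighbours $v_{i-1},v_i,p_i$. That is exactly the configuration claw-freeness permits (two consecutive midpoints adjacent), so no contradiction can come from there. Separately, the sub-case where $s=v_i$ has $W$-neighbours $p_{i-1}$ and $v_{i+1}$ is already contradictory: $p_{i-1}v_{i+1}\in E(G)$ puts $v_{i-1}$ and $v_{i+1}$ at $G$-distance $2$.

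The missing idea is to use claw-freeness to control \emph{all} chords of $W$ at once, not only at a simplicial vertex.
\begin{enumerate}
\item[(1)] By (i), $N_G(p_i)\cap V(C)=\{v_i,v_{i+1}\}$, and two $v$'s are $G$-adjacent only if they are consecutive on $W$. Hence every chord of $W$ in $G$ joins two midpoints $p_i,p_j$.
\item[(2)] Suppose $j\notin\{i-1,i,i+1\}$ (indices modulo $m$). Then $v_i,v_{i+1},p_j$ are pairwise non-adjacent in $G$: $v_iv_{i+1}\notin E(G)$ by the choice of midpoints, and $p_j$ sees only $v_j,v_{j+1}$ on $C$. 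So $p_i$ is the centre of an induced claw.
\item[(3)] Thus every chord has the form $p_ip_{i+1}$, and it only cuts off the triangle $p_i,v_{i+1},p_{i+1}$. Replace each such path $p_i,v_{i+1},p_{i+1}$ of $W$ by the edge $p_ip_{i+1}$. The result is a cycle $W'$ of $G$ with no chords.
\item[(4)] We have $|V(W')|\ge m\ge 4$. Indeed, sending $j\mapsto p_j$ when $v_jv_{j+1}\notin E(G)$ and $j\mapsto v_j$ otherwise gives an injection $[m]\to V(W')$.
\end{enumerate}
An induced cycle of length at least $4$ in $G$ contradicts chordality.

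In your simplicial-vertex language, the only simplicial vertices of $G[V(W)]$ are such $v_{i+1}$'s lying between adjacent midpoints. Deleting them one by one ends at a chordless cycle of length at least $4$, which has no simplicial vertex. This is also the paper's route: it shows every chord of the lifted cycle joins midpoints with consecutive indices, then extracts an induced cycle of length at least $4$ from it.
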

	\begin{proof}
		(a) Suppose that $\widetilde{G}$ is not chordal, and let 
		$C = v_1, v_2, \ldots, v_\ell, v_1$ 
		be an induced cycle of length $\ell \ge 4$ in $\widetilde{G}$. 
		Note that if $\{v_i, v_{i+1}\} \in E(G)$, then 
		$\{v_{i-1}, v_i\}, \{v_{i+1}, v_{i+2}\} \in E(\widetilde{G}) \setminus E(G)$, 
		where the indices are taken modulo $\ell$. So, $|E(C)\cap(E(\widetilde{G})\setminus E(G))|\geq\lceil\frac{\ell}{2}\rceil\geq 2$. From the structure of $\widetilde{G}$ and by the fact that $C$ does not have any chord, for each $e_i=\{v_i,v_{i+1}\}\in E(C)\cap(E(\widetilde{G})\setminus E(G))$ one can choose  $w_i\in[n]\setminus V(C)$ such that $\{w_i,v_i\},\{w_i,v_{i+1}\}\in E(G)$ and $N_G[w_i]\cap V(C)=\{v_i,v_{i+1}\}$ (It is clear that if $i\neq j$ and $ e_i,e_j\in E(C)\cap(E(\widetilde{G})\setminus E(G))$, then $w_i\neq w_j$). One can easily check that $$(E(C)\cap E(G))\cup\{\{w_i,v_i\},\{w_i,v_{i+1}\}\, : \, \{v_i,v_{i+1}\}\in E(C)\cap(E(\widetilde{G})\setminus E(G))\}$$
		produces a cycle $C'$ of length at least $6$ in $G$ which is a contradiction.
		
		(b) Let $C = v_1, v_2, \ldots, v_\ell, v_1$ 
		be an induced cycle of length $\ell$  in $\widetilde{G}$ with the edges $e_i=\{v_i,v_{i+1}\}$ for all $i$, where $v_{\ell+1}=v_1$, and by contradiction assume that $\ell \ge 4$. Let $C'$ be the cycle in $G$ obtained from $C$, as described in (a). Since $G$ is chordal, $C'$ has a chord.
		Let $E(C)\cap(E(\widetilde{G})\setminus E(G))=\{e_{i_1},\ldots,e_{i_r}\}$. Then $r\geq 2$, $N_G[w_{i_p}]\cap V(C)=\{v_{i_p},v_{i_p+1}\}$ for all $1\leq p\leq r$, and there are indices $i_p,i_q$ such that $|i_p-i_q|\geq 2$. Indeed, if there exists an edge $e_m\in E(G)$, then $e_{m-1},e_{m+1}\in E(\widetilde{G})\setminus E(G)$, and hence $m-1,m+1\in \{i_1,\ldots,i_r\}$. 
		From the equality $N_G[w_{i_p}]\cap V(C)=\{v_{i_p},v_{i_p+1}\}$ for all $p$, we see that any chord of $C'$ in $G$ is an edge of $G$  of the form
		$\{w_{i_t},w_{i_s}\}$ for some indices $i_t$ and $i_s$. Since $G$ is claw-free, for any such chord $\{w_{i_t},w_{i_s}\}$ we have $|i_t-i_s|=1$. Let $p$ and $q$ be indices with $|i_p-i_q|\geq 2$.  Let $C''$ be a cycle of $G$ of shortest length with $V(C'')\subseteq V(C')$ and $w_{i_p},w_{i_q}\in V(C'')$. 
		Then $C''$ is an induced cycle of $G$ of length at least $4$, which is a contradiction. 
	\end{proof}

	An \emph{outerplanar graph} is a graph that can be drawn in the plane with all its vertices lying on the boundary of the outer face. When such a graph admits no further edge additions while maintaining its outerplanar property, it is called a \emph{maximal outerplanar graph}. If $G$ is a maximal outerplanar graph, then $G$ is  Hamiltonian. A maximal outerplanar graph embedded in the plane will be called a \emph{maximal outerplane graph}.
	For  a maximal outerplane graph  $G$, we denote by $ C_G$ the (unique) Hamiltonian cycle which is the boundary of the outer face.
	Let $ f$ be an inner face of a maximal outerplane graph $G$. If $f$ is adjacent to the outer face,   $f$ is called  a \emph{marginal
		triangle}; otherwise  $f$ is called  an \emph{internal triangle}. A maximal outerplane graph $G$ without internal triangles is called
	\emph{striped.} By \cite[Proposition 2]{Campos2013}, any  maximal outerplane graph $G$ of order $n\geq 4$ with
	$k$ internal triangles has $k+2$ vertices of
	degree $2$.  If $G$ is an arbitrary  maximal outerplane graph, then $\widetilde{G}$ is not necessarily a chordal graph (see Figure \ref{polygon}). In the following we show that if $G$ is a striped maximal outerplane graph, then $\widetilde{G}$ is a chordal graph.

	\begin{Proposition}\label{outerplane}
		If $G$ is  a striped maximal outerplane graph, then $\widetilde{G}$ is a chordal graph.
	\end{Proposition}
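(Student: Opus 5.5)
We argue by induction on $n=|V(G)|$, in the same spirit as the proof of Proposition~\ref{block}: remove a suitable degree-$2$ vertex, apply the induction hypothesis to the smaller striped maximal outerplane graph, and check that no induced cycle of length at least $4$ can pass through the removed vertex. For $n\le 4$ the graph $G$ is $K_3$ or $K_4$ minus an edge. In both cases $\widetilde{G}$ is complete, hence chordal.

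For $n\ge 5$, a striped maximal outerplane graph has no internal triangles. By \cite[Proposition 2]{Campos2013} it therefore has exactly two vertices of degree $2$, and its weak dual (the inner faces joined by shared chords) is a path. Let $x$ be a degree-$2$ vertex, with $N_G(x)=\{a,b\}$ and $\{a,b\}\in E(G)$; $x$ lies in the end marginal triangle $\{x,a,b\}$. Then $H=G\setminus x$ is again a striped maximal outerplane graph, since deleting an ear of the path-like triangulation keeps it maximal outerplanar with no internal triangle. By induction $\widetilde{H}$ is chordal. Because $x$ has degree $2$, its neighbourhood in $\widetilde{G}$ is $N_G(a)\cup N_G(b)\setminus\{x\}$. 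Moreover, $\widetilde{G}\setminus x=\widetilde{H}$, since any length-$2$ path through $x$ joins $a$ and $b$, which are already adjacent.

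So it suffices to show that $N_{\widetilde{G}}(x)$ is a clique in $\widetilde{H}$; then $x$ is simplicial in $\widetilde{G}$, and $\widetilde{G}$ is chordal because $\widetilde{H}$ is. Vertices in $N_G(a)$ are pairwise adjacent in $\widetilde{G}$ via $a$, and likewise for $N_G(b)$. The remaining task is to show that every $u\in N_G(a)$ and $v\in N_G(b)$ are at distance at most $2$ in $H$.

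This is the main obstacle, and it is where the striped hypothesis is used. Since the triangles form a path starting at $\{x,a,b\}$, the next triangle is $\{a,b,c\}$. In a striped graph every triangle has an outer edge, which limits how $N_G(a)$ and $N_G(b)$ are arranged. Tracing the strip from the end, one of $a,b$, say $b$, has all of its neighbours except possibly one among $\{x,a,c,d\}$, where $\{a,c,d\}$ or $\{b,c,d\}$ is the following triangle. Vertices in $N_G(a)$ then form a fan $a,c,\dots$ along the strip, and each of them is adjacent to $a$, while $a$ is adjacent to $b$ and to $c$.

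Caution is needed here. Taking $u\in N_G(a)$ and $v\in N_G(b)$ far along the strip, $a$ is not a common neighbour, because $v\notin N_G(a)$. To handle this I would split into cases according to whether $a$ or $b$ has degree $3$. Say $\deg_G(b)=3$ with $N_G(b)=\{x,a,c\}$; then $N_G(b)\setminus\{x\}\subseteq N_G[a]$, so every $v\in N_G(b)$ is $a$ or adjacent to $a$, hence adjacent in $\widetilde{G}$ to every $u\in N_G(a)$. The other case is symmetric.

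What remains is to verify, from the absence of internal triangles, that one of $a,b$ has degree $3$ when $n\ge5$. The triangle $\{a,b,c\}$ has at most one outer edge other than the chord $ab$: either $ac$ or $bc$ is a chord. If $bc$ is a chord, then $b$'s outer edge goes to $x$ and $b$ has degree at least $4$, while $a$'s outer neighbour is $c$, so $\deg(a)=3$. The other case is symmetric, which completes the case split.
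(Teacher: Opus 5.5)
Your proof is correct and follows the paper's argument: you induct by deleting a degree-$2$ vertex $x$ (the paper's $v_1$), use the absence of internal triangles to show that one neighbour of $x$ has degree $3$, so its closed neighbourhood lies inside the other's (the paper's cases (i)/(ii)), and conclude that $x$ is simplicial in $\widetilde{G}$, which is what the paper's ``every cycle through $v_1$ has the chord $\{u,v\}$'' check establishes. You also justify the degree-$3$ dichotomy and the fact that $G\setminus x$ remains striped, both of which the paper only asserts, and the exploratory ``fan'' paragraph in the middle can be cut.
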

	\begin{proof}
		Let $G$ be a striped maximal outerplane graph. We prove the assertion by induction on $n=|V(G)|$.  If $n<5$, then  the result is clear. Suppose that $n \geq 5$, and let $C_G = ( v_1,v_2, \ldots, v_{n})$ be the boundary of its outer face, where the vertices of $C_G$ are labeled in clockwise order. Since $G$ is striped, it has exactly two vertices of degree $2$. Without loss of generality, assume that $\deg_G(v_1) = \deg_G(v_\ell) = 2$ $(\ell\geq 3)$. It is clear that $\{v_1,v_2,v_n\}$ is a clique of $G$.   The graph $H=G\setminus v_1$ is again a striped maximal outerplane graph and so by induction hypothesis $\widetilde{H}$ is a chordal graph.  Since $G$ does not have any  internal triangle, one of the following situations holds:
		\begin{itemize}
			\item[(i)] $\deg_G(v_n)=3$ and $\{v_2,v_{n-1}\}\in E(G)$.
			\item[(ii)] $\deg_G(v_2)=3$ and $\{v_3,v_n\}\in E(G)$.
		\end{itemize}

		Suppose we  are in the case (i) (the case (ii) can be discussed similarly). 
		Notice that $N_G[v_n]\subset N_G[v_2]$  and
		$$E(\widetilde{G})=E(\widetilde{H})\cup\{\{v_1,v_i\}: \{v_2,v_i\}\in E(G), v_i\neq v_1\}\cup\{\{v_1,v_2\}\}.$$
		
		In order to show that $\widetilde{G}$ is chordal, we consider a cycle $C$ in $\widetilde{G}$ of length $m\geq 4$ and  show that $C$ has a chord. If $v_1\notin V(C)$, then $C\subseteq V(\widetilde{H})$, and then $C$ has a chord. So suppose that $v_1\in V(C)$, and write $C: u,v_1,v,w_1,\ldots,w_t$.
		We have $N_G(v_1)=\{v_2,v_n\}$.  
		If $u,v\in N_G(v_1)$, then $\{u,v\}=\{v_2,v_n\}$ is a chord of $C$ and we are done. So we may assume that $|\{u,v\}\cap N_G(v_1)|\leq 1$.
		First consider the case that $|\{u,v\}\cap N_G(v_1)|=1$, and without loss of generality assume that 
		$\{u,v\}\cap N_G(v_1)=\{v\}$. Then $v\in \{v_2,v_n\}$ and $u\notin \{v_2,v_n\}$. 
		From $\{v_1,u\}\in E(\widetilde{G})$ and $u\notin N_G(v_1)$, it follows that $\{v_2,u\}\in E(G)$. If  $v=v_2$, then $\{u,v\}$ is a chord of $G$. If $v\neq v_2$, then  $v=v_n$. So, $v,u\in N_G(v_2)$. 
		Hence, $\{u,v\}\in E(\widetilde{G})$, and it forms a chord in $C$.

		Now, assume that  $\{u,v\}\cap N_G(v_1)=\emptyset$. Then from $u,v\in N_{\widetilde{G}}(v_1)\setminus \{v_2,v_n\}$ we have $u,v\in N_G(v_2)$. Therefore, $\{u,v\}\in E(\widetilde{G})$ is a chord of $C$. 
	\end{proof}
	
	\medskip
	
Using Proposition  \ref{claw-free}, Proposition \ref{block}, Proposition \ref{small cycle}, Proposition \ref{outerplane} and  \cite[Theorem 6.5, Theorem 6.7, Corollary 6.9]{HV} we obtain the following corollary. 
	
	\begin{Corollary}\label{regEdgeIdealCase}
		Let $G$ be a graph without isolated vertices on $[n]$, and let ${\bf k}={\bf 2}\in \mathbb{N}^n$. Then
		\begin{enumerate}
			\item [(a)] $a_3(G)\leq\reg(S/\NI_{\bf k}(G))\leq a(\widetilde{G})$.  
			
			\item [(b)] If  $G$ is a claw-free chordal graph, then $\reg(S/\NI_{\bf k}(G))=a_3(G)$. 
			
			\item [(c)] If $G$ is a block graph, then $\reg(S/\NI_{\bf k}(G))=\im(\widetilde{G})$.

			\item [(d)] If $G$ has no cycle of length bigger than $5$, then $\reg(S/\NI_{\bf k}(G))=\im(\widetilde{G})$.
			
			\item[(e)] If $G$ is a striped maximal outerplane graph, then $\reg(S/\NI_{\bf k}(G))=\im(\widetilde{G})$.
		\end{enumerate}
	\end{Corollary}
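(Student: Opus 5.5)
Everything rests on the identification (\ref{graphCase}), $\NI_{\bf k}(G)=I(\widetilde{G})$ for ${\bf k}={\bf 2}$. With it, each item of Corollary~\ref{regEdgeIdealCase} becomes a statement about the regularity of the edge ideal of $\widetilde{G}$. We then combine the general bounds of \cite{HV} with the structural results of this section: Proposition~\ref{claw-free}, Proposition~\ref{block}, Proposition~\ref{small cycle} and Proposition~\ref{outerplane}.

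For (a), we use the general inequalities $\im(H)\le \reg(S/I(H))\le a(H)$, valid for any graph $H$ (\cite[Theorem 6.5, Theorem 6.7]{HV}), and apply them with $H=\widetilde{G}$. This gives $\im(\widetilde{G})\le \reg(S/\NI_{\bf k}(G))\le a(\widetilde{G})$. The first part of Proposition~\ref{claw-free} gives $a_3(G)\le \im(\widetilde{G})$, which supplies the lower bound.

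For (c), (d) and (e), we use the fact that chordal graphs satisfy $\reg(S/I(H))=\im(H)$ (\cite[Corollary 6.9]{HV}). So it suffices to know that $\widetilde{G}$ is chordal in each case:
\begin{itemize}
\item for block graphs this is Proposition~\ref{block};
\item for graphs with no cycle of length bigger than $5$ it is Proposition~\ref{small cycle}(a);
\item for striped maximal outerplane graphs it is Proposition~\ref{outerplane}.
\end{itemize}
In each case we conclude $\reg(S/\NI_{\bf k}(G))=\im(\widetilde{G})$.

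For (b), Proposition~\ref{small cycle}(b) shows that $\widetilde{G}$ is chordal when $G$ is claw-free chordal. Hence $\reg(S/\NI_{\bf k}(G))=\im(\widetilde{G})$ as above. Since $G$ is claw-free, the second part of Proposition~\ref{claw-free} gives $\im(\widetilde{G})=a_3(G)$, which is the claim.

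There is no real obstacle here, because all the combinatorial work is already done in the cited propositions. The only point to check is that the hypotheses of \cite{HV} apply to $\widetilde{G}$ as stated. That graph has no isolated vertices since $G$ has none, and in any case isolated vertices do not affect the edge ideal, so nothing needs to be adjusted.
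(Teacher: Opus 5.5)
Your proposal is correct and matches the paper's proof: you use (\ref{graphCase}) to rewrite $\reg(S/\NI_{\bf k}(G))$ as $\reg(S/I(\widetilde{G}))$, prove (a) from the general Hà--Van Tuyl bounds together with Proposition~\ref{claw-free}, and prove (b)--(e) from the chordal-graph formula $\reg=\im$. For (b)--(e) the chordality of $\widetilde{G}$ comes from Propositions~\ref{small cycle}, \ref{block} and \ref{outerplane}, and (b) additionally uses the claw-free equality $\im(\widetilde{G})=a_3(G)$.
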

	
	\begin{proof}
		(a) By (\ref{graphCase}), we have $\reg(S/\NI_{\bf k}(G))=\reg(S/I(\widetilde{G}))$. Moreover, by \cite[Theorem 6.5, Theorem 6.7]{HV}, we have $\im(\widetilde{G})\leq\reg(S/I(\widetilde{G}))\leq a(\widetilde{G})$. Combining these  inequalities with Proposition~\ref{claw-free} yields the assertion. 
		
		(b) By Proposition \ref{small cycle}(b), $\widetilde{G}$ is a chordal graph. Hence,  using  \cite[Corollary 6.9]{HV} we have $\reg(S/I(\widetilde{G}))=\im(\widetilde{G})$.  Now, using Proposition \ref{claw-free}, we have $\im(\widetilde{G})=a_3(G)$, which implies the result.
		
		(c), (d), (e)  By Proposition \ref{block}, Proposition \ref{small cycle}(a) and Proposition \ref{outerplane}, we know that  $\widetilde{G}$ is a chordal graph. Hence, using  \cite[Corollary 6.9]{HV} we have $\reg(S/I(\widetilde{G}))=\im(\widetilde{G})$.   
	\end{proof}
	
	The following result gives a characterization for the  Cohen-Macaulayness of the ring $S/\NI_{\bf k}(T)$ for a tree $T$. A vertex $v$ in a tree $T$ is called a {\em joint vertex}, if it is adjacent to a leaf (degree-one vertex) of $T$.
	
	\begin{Theorem}\label{CMtree}
		Let  $T$ be a tree with the vertex set  $[n]$, let ${\bf k}={\bf 2}\in \mathbb{N}^n$, and let $W_T$ be the set of joint vertices of $T$.
		 Then $S/\NI_{\bf k}(T)$ is Cohen-Macaulay if and only if $|N_T[j]\cap W_T|=1$ for all $1\leq j\leq n$.
	\end{Theorem}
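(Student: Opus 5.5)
The plan is to reduce to a known characterization of Cohen–Macaulay chordal edge ideals. By (\ref{graphCase}), $\NI_{\bf k}(T)=I(\widetilde{T})$. Since a tree is a block graph, Proposition~\ref{block} shows that $\widetilde{T}$ is chordal. For chordal graphs I will use the theorem of Herzog, Hibi and Zheng: $S/I(H)$ is Cohen–Macaulay if and only if $V(H)$ is the disjoint union of those maximal cliques of $H$ which contain a free (simplicial) vertex. The statement then becomes purely combinatorial. We must show that the maximal cliques of $\widetilde{T}$ admitting a free vertex partition $[n]$ if and only if $|N_T[j]\cap W_T|=1$ for all $j$.

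Throughout I assume $n\ge 3$. The degenerate case $T=P_2$ must be set aside: there $I(\widetilde T)=(x_1x_2)$ is Cohen–Macaulay, but both vertices are joint vertices, so it would be treated separately or excluded.

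\emph{Step 1: the maximal cliques of $\widetilde T$.} Any clique of $\widetilde{T}$ with at least three vertices lies inside $N_T[v]$ for some $v$. To see this, take three pairwise distance-$\le 2$ vertices of a tree. Their median vertex is within distance one of each of them, and this argument extends from triples to the whole clique (Helly property for subtrees of a tree). Each $N_T[v]$ is a clique of $\widetilde T$. Hence, for $n\ge 3$, the maximal cliques of $\widetilde{T}$ are exactly the sets $N_T[v]$ with $\deg_T(v)\ge 2$.

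\emph{Step 2: the free vertices of $\widetilde T$.} First let $\ell$ be a leaf with neighbor $w$. Then $N_{\widetilde T}[\ell]=N_T[w]$, so $\ell$ is free and lies only in the clique $N_T[w]$, where $w\in W_T$. Next let $v$ be a non-leaf. Then $v$ belongs to $N_T[v]$ and to $N_T[u]$ for every non-leaf neighbor $u$ of $v$. So $v$ is free only when all its neighbors are leaves, that is, when $T$ is a star with centre $v$, and in that case $N_T[v]$ again has a leaf. Therefore the maximal cliques of $\widetilde T$ containing a free vertex are precisely the sets $N_T[w]$ with $w\in W_T$. These are pairwise distinct, because two distinct joint vertices have distinct closed neighborhoods in a tree with $n\ge3$.

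\emph{Step 3: the partition condition.} The family $\{N_T[w]: w\in W_T\}$ partitions $[n]$ exactly when each vertex $j$ lies in exactly one set $N_T[w]$. Since $j\in N_T[w]$ if and only if $w\in N_T[j]$, this says $|N_T[j]\cap W_T|=1$ for all $j$. Combining this with the Herzog–Hibi–Zheng criterion gives both directions of the theorem.

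The main obstacle I expect is Step 2, together with the clique description in Step 1. Free vertices of $\widetilde T$ must be identified correctly, in particular non-leaf vertices whose neighbors are all leaves, and one must check that nothing other than the neighborhoods of joint vertices arises. I would first try a direct case analysis on whether $v$ is a leaf, and handle stars and very small trees explicitly.
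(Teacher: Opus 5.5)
Your proposal is correct and follows the paper's proof essentially step for step. You pass to $I(\widetilde T)$, use Proposition~\ref{block} and the Herzog--Hibi--Zheng criterion, identify the facets of $\Delta(\widetilde T)$ as the sets $N_T[v]$ with $v$ a non-leaf, show that the facets with a free vertex are exactly the $N_T[w]$ with $w\in W_T$, and rewrite the partition condition as $|N_T[j]\cap W_T|=1$. The only difference is how you describe the facets: you use the median/Helly property of subtrees, where the paper runs an induction on clique size with explicit cycle arguments.

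Your exclusion of $T=P_2$ is justified: there $S/(x_1x_2)$ is Cohen--Macaulay while $|N_T[1]\cap W_T|=2$. So the statement genuinely needs $n\ge 3$, which the paper's proof tacitly assumes when it says every facet has at least three vertices.
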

	\begin{proof}
We have $\NI_{\mathbf{k}}(T) = I(\widetilde{T})$, and by Proposition \ref{block}, $\widetilde{T}$ is a chordal graph. Let $F_1, \ldots, F_m$ be the facets of the clique complex $\Delta(\widetilde{T})$ that possess a free vertex. By \cite[Theorem 9.3.1]{HHBook}, the ring $S/I(\widetilde{T})$ is Cohen–Macaulay if and only if $[n]$ is the disjoint union of $F_1, \ldots, F_m$.

Observe that for any $j \in [n]$, $N_T[j]$ is a face of $\Delta(\widetilde{T})$. Moreover, $N_T[j] \subseteq N_T[i]$ if and only if $j$ is a leaf adjacent to $i$. 
We claim that $F \subseteq [n]$ is a facet of $\Delta(\widetilde{T})$ if and only if $F = N_T[i]$ for some non-leaf vertex $i$. 
To prove the claim, first we show that for a non-leaf vertex $i \in [n]$, the set $F = N_T[i]$ is a facet of $\Delta(\widetilde{T})$. By contradiction assume that this is not the case. Then there exists $j \in [n] \setminus F$ such that $F \cup \{j\}$ is a face of $\Delta(\widetilde{T})$. Hence $\{i, j\} \in E(\widetilde{T}) \setminus E(T)$, which means there exists $\ell_1 \in N_T(i) \cap N_T(j)$. Since $\deg_T(i) > 1$, we may choose a vertex $\ell_2 \neq \ell_1$ with $\ell_2 \in N_T(i)\subset F$. As $F \cup \{j\}$ is a face of  $\Delta(\widetilde{T})$ containing $j$ and $\ell_2$, we conclude that  $\{j, \ell_2\} \in E(\widetilde{T})$. Moreover, $\{j, \ell_2\} \notin E(T)$, since $T$ contains no cycle. Thus there exists $\ell_3 \in N_T(j) \cap N_T(\ell_2)$. Since $\{\ell_1, \ell_2\} \notin E(T)$, we have $\ell_3 \neq \ell_1$. Then $i, \ell_1, j, \ell_3, \ell_2, i$ forms a cycle of length $5$ in $T$, a contradiction. Hence, $F$ is indeed a facet of $\Delta(\widetilde{T})$.

Next, let $F$ be an arbitrary face of $\Delta(\widetilde{T})$ with $|F| = m \geq 3$. We prove by induction on $m$ that $F \subseteq N_T[j]$ for some non-leaf vertex $j \in [n]$.

\textbf{Base case: } Let $m = 3$. We may write $F = \{p, q, r\}$. If two of the edges among $\{p,q\}, \{p,r\}, \{q,r\}$ belong to $E(T)$, then $F \subseteq N_T[j]$ for some $j \in \{p,q,r\}$. Assume therefore that $\{p,q\}, \{p,r\} \notin E(T)$. Then there exist $\ell_1, \ell_2 \in [n] \setminus \{p,q,r\}$ such that $\{\ell_1, p\}, \{\ell_1, q\}, \{\ell_2, p\}, \{\ell_2, r\} \in E(T)$. If $\ell_1 = \ell_2$, then $F \subset N_T[\ell_1]$ and we are done. So assume $\ell_1 \neq \ell_2$. Since $T$ contains no cycle, we have $\{q, r\} \notin E(T)$. Therefore, there exists $\ell_3 \in [n] \setminus \{p,q,r\}$ such that $\{\ell_3, q\}, \{\ell_3, r\} \in E(T)$. This forces the induced subgraph of $T$ on the vertex set  $\{p,q,r,\ell_1,\ell_2,\ell_3\}$ to contain a cycle, which is a contradiction. Hence $\ell_1 = \ell_2$, and we are done.

\textbf{Inductive step: } Let $m > 3$. Choose an arbitrary vertex $p \in F$. By the induction hypothesis, there exists $j \in [n]$ such that $F \setminus \{p\} \subseteq N_T[j]$. Suppose, for contradiction, that $p \notin N_T[j]$. Choose two distinct vertices $q, r \in F \setminus \{p,j\}$. Since $\{p, q, r\}$ is a face of $\Delta(\widetilde{T})$ and $T$ contains no cycle, one of the following possibilities occurs:

(i) $\{p,q\}, \{p,r\} \in E(\widetilde{T}) \setminus E(T)$. Then there exist $\ell_1, \ell_2 \in [n] \setminus \{p,q,r,j\}$ such that $\{\ell_1, p\}, \{\ell_1, q\}, \{\ell_2, p\}, \{\ell_2, r\} \in E(T)$. It follows that the induced subgraph of $T$ on the vertex set  $\{p,q,r,\ell_1,\ell_2,j\}$ contains a cycle, a contradiction.

(ii) $\{p,q\} \in E(\widetilde{T}) \setminus E(T)$ and $\{p,r\} \in E(T)$. By an argument similar to (i), one can find a cycle in $T$, which is a contradiction.

Thus $p \in N_T[j]$ and $F \subseteq N_T[j]$. This completes the induction.

Now let $F$ be an arbitrary facet of $\Delta(\widetilde{T})$. Clearly $|F| \geq 3$. Hence, $F \subseteq N_T[j]$ for some non-leaf vertex $j$. Since $N_T[j]$ is a facet, we conclude $F = N_T[j]$. Thus the facets of $\Delta(\widetilde{T})$ are precisely the sets $N_T[j]$ corresponding to non-leaf vertices $j$.

Taking into account the above arguments,  one can easily see that an arbitrary facet $F$ of $\Delta(\widetilde{T})$ has a free vertex if and only if $F = N_T[i]$ for some $i \in W_T$. Therefore $\{F_1, \ldots, F_m\} = \{N_T[i] : i \in W_T\}$, and $|W_T| = m$. Write $W_T = \{i_1, \ldots, i_m\}$ with $F_r = N_T[i_r]$ for each $1 \leq r \leq m$. Clearly $[n]$ is the disjoint union of $F_1, \ldots, F_m$ if and only if each vertex of $G$ belongs to exactly one of the sets $F_r = N_T[i_r]$, which is equivalent to $|N_T[j] \cap W_T| = 1$ for all $1 \leq j \leq n$. This completes the proof.
	\end{proof}
	
The following corollary follows from Theorem \ref{CMtree}.
 
\begin{Corollary}
Let $T$ be a caterpillar graph with a maximal central path $1,2,\ldots,\ell$, and let ${\bf k}={\bf 2}\in \mathbb{N}^n$.
Then $S/\NI_{\bf k}(T)$ is Cohen-Macaulay if and only if $\ell\equiv 0 \pmod{3}$ and $W_T=\{3q-1\ : \ 1\leq q\leq \frac{\ell}{3}\}$.
\end{Corollary}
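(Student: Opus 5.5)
The plan is to apply Theorem~\ref{CMtree} directly: $S/\NI_{\bf k}(T)$ is Cohen--Macaulay if and only if $|N_T[j]\cap W_T|=1$ for every vertex $j$. I will then translate this condition, for a caterpillar, into a condition on the central path alone.

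First I fix the structure. Since $1,2,\ldots,\ell$ is a maximal central path, the endpoints $1$ and $\ell$ are leaves of $T$; otherwise the path could be extended. Every vertex off the path is a leaf adjacent to some path vertex. I assume $\ell\geq 3$, so that $T$ is not a single edge; the degenerate cases $T=P_2$ and stars should be checked by hand, or excluded as the intended reading. Under this assumption no leaf is a joint vertex, so $W_T\subseteq\{2,\ldots,\ell-1\}$. Moreover $2,\ell-1\in W_T$, being adjacent to the leaves $1$ and $\ell$.

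Next I check that the condition is automatic at every leaf. A leaf $j$ off the path has $N_T[j]=\{j,p\}$, where $p$ is its unique neighbour. Here $p$ is a joint vertex and $j$ is not, so $|N_T[j]\cap W_T|=1$. The same holds for $j=1$ and $j=\ell$. Hence the criterion of Theorem~\ref{CMtree} reduces to the following: for every path vertex $i\in\{2,\ldots,\ell-1\}$, exactly one of $i-1,i,i+1$ lies in $W_T$. Here I use that the leaves hanging at $i$ are not in $W_T$, so only path neighbours matter.

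Then I solve this perfect-code condition on the path. For the forward direction I argue inductively along the path. We have $2\in W_T$. The condition at $i=2$ forces $3\notin W_T$, and the condition at $i=3$ forces $4\notin W_T$. The condition at $i=4$ then forces $5\in W_T$, and so on. By induction, $W_T\cap\{2,\ldots,\ell-1\}=\{3q-1\}$ within that range. Since $\ell-1\in W_T$, we get $\ell-1\equiv 2\pmod 3$, that is, $\ell\equiv 0\pmod 3$, and $W_T=\{3q-1: 1\le q\le \ell/3\}$. Conversely, if $\ell\equiv0\pmod3$ and $W_T$ is this set, then each path vertex $i$ with $2\le i\le \ell-1$ is within distance one of exactly one element $3q-1$. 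Combined with the leaf check above, the criterion holds and Theorem~\ref{CMtree} gives Cohen--Macaulayness.

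I expect the main obstacle to be the bookkeeping at the ends and in degenerate small cases: confirming that the endpoints are leaves, that $2$ and $\ell-1$ are genuinely joint vertices, and that in the inductive propagation a vertex $i\equiv 2\pmod 3$ not in $W_T$ yields a contradiction at the appropriate neighbour. The rest of the argument is routine.
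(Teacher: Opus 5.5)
Your argument is correct and is exactly the derivation from Theorem~\ref{CMtree} that the paper leaves implicit. You check that leaves satisfy the criterion automatically, reduce it on the spine to ``exactly one of $i-1,i,i+1$ lies in $W_T$'', and propagate from $1\notin W_T$, $2\in W_T$ to force the pattern $3q-1$ and $\ell\equiv 0\pmod 3$.

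On the degenerate cases you left open: stars need no separate treatment, since they have $\ell=3$ and $W_T=\{2\}$. By contrast, $T=P_2$ is a genuine exception: $\NI_{\bf k}(P_2)=(x_1x_2)$ is Cohen--Macaulay although $\ell=2$. So your hypothesis $\ell\geq 3$ is actually necessary, and Theorem~\ref{CMtree} itself fails for $P_2$.
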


We conclude with the following corollaries that are analogues of Corollaries~\ref{product1} and~\ref{product2} in the setting where $\NI_{\bf k}(G)=I(\widetilde{G})$.

\begin{Corollary}\label{product3}
	Let $G=P_2\times P_r$ for an odd number $r$, and let ${\bf k}={\bf 2}\in \mathbb{N}^{2r}$. Then 	$$
	\pd(S/\NI_{\bf k}(G))\geq 2r-\lceil\frac{r}{2}\rceil
	\qquad\text{and}\qquad
	\depth(S/\NI_{\bf k}(G))\leq \lceil\frac{r}{2}\rceil.
	$$
\end{Corollary}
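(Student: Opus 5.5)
We apply Proposition~\ref{m-vertex} with ${\bf k}={\bf 2}$. Then ${\bf m}={\bf k}-{\bf 1}=(1,\ldots,1)$, so we need a set $C$ with $|N_G[z]\cap C|\le 1$ for all $z\in C$ and $|N_G[z]\cap C|=1$ for all $z\notin C$. The first condition says $C$ is independent in $G$. The second says every vertex outside $C$ has exactly one neighbor in $C$. Together, $C$ is a \emph{perfect code} (efficient dominating set) of $G$. Proposition~\ref{m-vertex} then gives $\pd(S/\NI_{\bf k}(G))\ge 2r-|C|$ and $\depth(S/\NI_{\bf k}(G))\le |C|$. So it suffices to exhibit a perfect code of $P_2\times P_r$ of size $\lceil r/2\rceil=(r+1)/2$ when $r$ is odd.

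Use the labeling of Corollary~\ref{product1}: vertices $x_1,\ldots,x_r,y_1,\ldots,y_r$ with ladder edges. Take the alternating set
\[
C=\{x_i:\ i\equiv 1 \pmod 4\}\cup\{y_i:\ i\equiv 3 \pmod 4\},\qquad 1\le i\le r,
\]
which picks exactly one vertex from each odd-indexed rung. Since $r$ is odd, $|C|=(r+1)/2=\lceil r/2\rceil$.

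Now check the conditions. Vertices of $C$ sit in rungs $1,3,5,\ldots$ on alternating sides, so no two are adjacent: same-rung pairs are excluded, and consecutive odd rungs are two apart. Hence $C$ is independent. For the domination count, fix $z\notin C$ and split into two cases.
\begin{itemize}
\item If $z$ lies in an odd rung $i$, its only neighbor in $C$ is the partner on that rung. Its horizontal neighbors are in even rungs, so they are not in $C$.
\item If $z$ lies in an even rung $i$, say $z=x_i$, its candidate neighbors in $C$ are $x_{i-1}$, $x_{i+1}$ and $y_i$. Here $y_i\notin C$, and exactly one of $x_{i-1},x_{i+1}$ is in $C$ because the sides alternate along odd rungs. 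The endpoint rungs $1$ and $r$ are odd, so every even $i$ satisfies $2\le i\le r-1$ and both $i-1$ and $i+1$ exist. The same argument applies to $z=y_i$.
\end{itemize}
So every $z\notin C$ has exactly one neighbor in $C$, and $C$ is an ${\bf m}$-vertex cover.

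This even-rung case is the main obstacle. It uses the oddness of $r$, which guarantees both neighboring odd rungs exist, and the alternation, which guarantees exactly one of the two candidates is in $C$. Once it is verified, Proposition~\ref{m-vertex}(a) gives the projective dimension bound and part (b) gives the depth bound.
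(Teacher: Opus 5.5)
Your proposal is correct and is essentially the paper's own proof. The paper uses the same set $C=\{x_{4q+1}\}\cup\{y_{4q+3}\}$, which puts one vertex on each odd rung on alternating sides. It asserts $|N_G[z]\cap C|=1$ for every vertex $z$, so that $C$ is a ${\bf 1}$-vertex cover of size $\lceil r/4\rceil+\lceil (r-2)/4\rceil=\lceil r/2\rceil$, and then applies Proposition~\ref{m-vertex}. Your split into odd and even rungs, and your observation that oddness of $r$ guarantees both neighbouring odd rungs exist, supply the verification the paper states without detail.
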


\begin{proof}
	We may write $V(G)=\{x_1,\ldots,x_r,y_1,\ldots,y_r\}$ and
	$$E(G)=\{\{x_i,x_{i+1}\}\  : 1\leq i<r\}\cup \{\{y_i,y_{i+1}\}\  : 1\leq i<r\}\cup \{\{x_i,y_i\}\  : 1\leq i\leq r\}.$$
	Consider the subset $$C=\{x_{4q+1}\ : \  0\leq q< \lceil\frac{r}{4}\rceil\}\cup \{y_{4q+3}\ : \  0\leq q<\lceil\frac{r-2}{4}\rceil\}.$$ For each vertex $z\in V(G)$, let $m_z=1$, and consider the vector ${\bf m}\in \mathbb{N}^{2r}$ defined by $m_z$'s. In other words, ${\bf m}={\bf 1}={\bf k}-{\bf 1}\in \mathbb{N}^{2r}$, where ${\bf 1}=(1,\ldots,1)\in \mathbb{N}^{2r}$. Then $C$ is an ${\bf m}$-vertex cover of $G$. Indeed, for any $z\in V(G)$ we have $	|N_G[z]\cap C|=1=m_z$.  Hence, $G$ has  an ${\bf m}$-vertex cover of cardinality $\lceil\frac{r}{4}\rceil+\lceil\frac{r-2}{4}\rceil=\lceil\frac{r}{2}\rceil$.
	Therefore, the desired inequalities follow immediately from Proposition~\ref{m-vertex} and the Auslander-Buchsbaum formula.
\end{proof}

\begin{Corollary}\label{product4}
	Let  $G=P_2\boxtimes P_r$ be the strong product of the path graphs $P_2$ and $P_r$, and  let ${\bf k}={\bf 2}\in \mathbb{N}^{2r}$. Then
	$$\pd(S/\NI_{\bf k}(G))\geq  2r-\lceil\frac{r}{3}\rceil 	\qquad\text{and}\qquad \depth(S/\NI_{\bf k}(G))\leq  \lceil\frac{r}{3}\rceil.$$
\end{Corollary}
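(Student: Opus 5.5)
The plan is to apply Proposition~\ref{m-vertex} exactly as in Corollary~\ref{product3}. Since ${\bf k}={\bf 2}$, we have ${\bf m}={\bf k}-{\bf 1}={\bf 1}$. An ${\bf m}$-vertex cover is then a set $C\subseteq V(G)$ with
\[
|N_G[z]\cap C|\le 1 \ \text{ for } z\in C, \qquad |N_G[z]\cap C|=1 \ \text{ for } z\notin C.
\]
It therefore suffices to produce a set $C$ of cardinality $\lceil r/3\rceil$ whose closed neighborhoods partition $V(G)$. That is, $C$ should be a perfect code of $P_2\boxtimes P_r$. Proposition~\ref{m-vertex} then gives $\pd(S/\NI_{\bf k}(G))\geq 2r-|C|$ and $\depth(S/\NI_{\bf k}(G))\le |C|$.

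We label $V(G)=\{x_1,\ldots,x_r,y_1,\ldots,y_r\}$ with the edge set of Corollary~\ref{product2}, and call $\{x_i,y_i\}$ the $i$-th column. The key observation is that in the strong product
\[
N_G[x_i]=\{x_j,y_j : |j-i|\le 1,\ 1\le j\le r\},
\]
so $N_G[x_i]$ consists of full columns $i-1,i,i+1$, truncated at the ends. A set $C=\{x_{i_1},\ldots,x_{i_t}\}$ is then a perfect code precisely when the column windows $[i_s-1,i_s+1]\cap[1,r]$ partition $[1,r]$. Disjointness holds as soon as consecutive indices differ by at least $3$. The condition for $z\in C$ is automatic, because distinct chosen vertices are non-adjacent and so $|N_G[z]\cap C|=1$.

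We split into cases according to $r \bmod 3$.
\begin{itemize}
\item If $r\equiv 0$ or $2\pmod 3$, take $i_s=3s-1$ for $1\le s\le \lceil r/3\rceil$. The windows are $[1,3],[4,6],\ldots$, and the last window $[r-1,r]$ (when $r\equiv 2$) is truncated correctly.
\item If $r\equiv 1\pmod 3$, take $i_s=3s-2$ for $1\le s\le (r-1)/3+1=\lceil r/3\rceil$. The windows are $[1,2],[3,5],\ldots,[r-1,r]$.
\end{itemize}
In every case the windows tile $[1,r]$ exactly and $|C|=\lceil r/3\rceil$.

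The only real work is checking this case split, namely that the endpoint windows land exactly on columns $1$ and $r$ with the stated count. Once that is verified, the inequalities follow immediately from Proposition~\ref{m-vertex} and the Auslander--Buchsbaum formula.
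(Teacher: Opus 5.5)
Your proposal is correct and follows the paper's proof. Both take ${\bf m}={\bf 1}$, exhibit a set $C$ of $\lceil r/3\rceil$ vertices in one row of $P_2\boxtimes P_r$ whose closed neighborhoods (unions of consecutive columns) partition $V(G)$, and apply Proposition~\ref{m-vertex}. The only difference is cosmetic: for $r\equiv 2\pmod 3$ you place $C$ at columns $3s-1$, whereas the paper uses columns $3q+1$, and both choices tile $[1,r]$ correctly.
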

\begin{proof}
	Let the vertex set of  $G$ be $\{x_1,\ldots,x_r,y_1,\ldots,y_r\}$ and its edge set be  $$
	\bigl\{\{x_i,x_{i+1}\},\{x_i,y_{i+1}\},\{y_i,y_{i+1}\},\{y_i,x_{i+1}\}\;\big|\;1\leq i<r\bigr\}
	\;\cup\;
	\bigl\{\{x_i,y_i\}\;\big|\;1\leq i\leq r\bigr\}.$$ Define $C=\{x_{3q+1}\ : \  0\leq q<\lceil\frac{r}{3}\rceil\}$ when $r\not\equiv 0\ ({\rm mod} \ 3)$, and  $C=\{x_{3q+2}\ : \  0\leq q<\lceil\frac{r}{3}\rceil\}$ when $r\equiv 0\ ({\rm mod} \ 3)$. Consider the vector ${\bf m}={\bf 1}$, where ${\bf 1}=(1,\ldots,1)\in \mathbb{N}^{2r}$.
	For any $z\in V(G)$  we have $|N_G[z]\cap C|=1=m_z$.
	Thus $C$ is an ${\bf m}$-vertex cover of $G$ of cardinality $ \lceil\frac{r}{3}\rceil$. 
	So by Proposition \ref{m-vertex} the conclusion follows.
\end{proof}


\end{document}